\numberwithin{equation}{section}
\numberwithin{equation}{section}
\newcommand{\abbr}[1]{{\sc\lowercase{#1}}}
\newtheorem{thm}{Theorem}[section]
\newtheorem{cor}[thm]{Corollary}
\newtheorem{lem}[thm]{Lemma}
\newtheorem{ppn}[thm]{Proposition}
\newtheorem{rmk}[thm]{Remark}
\theoremstyle{definition}
\newtheorem{defn}[thm]{Definition}
\newtheorem{ex}[thm]{Example}
\newtheorem*{assumption}{Assumption}
\newtheorem{problem}[thm]{Problem}
\newcommand{\beq}{\begin{equation}}
\newcommand{\eeq}{\end{equation}}
\newcommand{\ep}{\epsilon}
\newcommand{\vphi}{\varphi}
\newcommand{\B}{\mathbb{B}}
\newcommand{\E}{\mathbb{E}}
\newcommand{\N}{\mathbb{N}}
	\renewcommand{\P}{\mathbb{P}}
\newcommand{\R}{\mathbb{R}}
\newcommand{\bS}{\mathbb{S}}
\newcommand{\Z}{\mathbb{Z}}
\newcommand{\ovl}{\overline}
\newcommand{\udl}{\underline}
\newcommand{\cA}{\mathcal{A}}
\newcommand{\cAe}{\mathcal{A}^{\sf e}}
\newcommand{\cC}{\mathcal{C}}
\newcommand{\cD}{\mathcal{D}}
\newcommand{\cF}{\mathcal{F}}
\newcommand{\cK}{\mathcal{K}}
\newcommand{\cL}{\mathcal{L}}
\newcommand{\cN}{\mathcal{N}}
\newcommand{\cT}{\mathcal{T}}
\newcommand{\ga}{\mathfrak{a}}
\newcommand{\wt}{\widetilde}
\newcommand{\wh}{\widehat}
\DeclareMathOperator{\argmin}{arg\,min}
\DeclareMathOperator{\argmax}{arg\,max}
\newcommand{\etas}{\eta_\ep}
\title[A random growth model with memory]{Averaging principle and shape theorem 
\\ for a growth model with memory}
\author[A. Dembo]{Amir Dembo}
\address{Department of Mathematics, Stanford University.}
\email{adembo@stanford.edu}
\author[P. Groisman]{Pablo Groisman}
\address{Departamento de Matemática, FCEN, Universidad de Buenos Aires,
IMAS-CONICET and NYU-ECNU Institute of Mathematical Sciences at
NYU Shanghai.}
\email{pgroisman@dm.uba.ar}
\author[R. Huang]{Ruojun Huang}
\address{Courant Institute.}
\author[V. Sidoravicius]{Vladas  Sidoravicius}
\address{Courant Institute and NYU-ECNU Institute of Mathematical Sciences at
NYU Shanghai.}
\begin{document}

\begin{abstract}
We present a general approach to study a class of random growth models in $n$-dimensional Euclidean space. These models are designed to capture basic growth features which are  expected to manifest at {\it the mesoscopic} level for several classical self-interacting processes originally defined at {\it the microscopic scale}. It includes once-reinforced random walk with strong reinforcement, origin-excited random walk, and few others, for which the set of visited vertices  is expected to form a ``limiting shape''. We prove an averaging principle that leads to such a shape theorem. The limiting shape can be computed in terms of the invariant measure of an associated Markov chain. 
\end{abstract}

\subjclass[2010]{60K35, 60K37, 82C22, 82C24}

\keywords{Averaging principle, hydrodynamic limit, excited random walk, shape theorem.}

\thanks{This research was supported in part by NSF grant DMS-1613091, UBACYT grant 20020160100147BA and PICT 2015-3154.}

\dedicatory{V. S. passed away in May 2019. The other authors dedicate this paper to his memory.}

%\date{\today} 

\maketitle

\section{Introduction}

% . Our process is a continuous-time Markov chain with two coordinates, the first one is given by star-shaped domain and the second one is a point inside the domain. This general setting includes continuous-space versions of cookie random walks, first passage percolation with exponential weights, uniform IDLA and many others.....Under some conditions, we prove a hydrodynamic limit for this process via an averaging principle. This allows as to obtain a shape theorem. One advantage of this setting is that limiting shapes can be computed explictely as the stationary solution of a deterministic (infinite dimensional) ODE. 

Random growth processes arise in great variety  in a large class of physical and biological phenomena, network dynamics, etc. Starting from seminal works  of Eden \cite{Ed} and Hammersley and Welsh \cite{HW}, a series of mathematical models have been developed to capture and understand the evolution and pattern formation
of growth processes. Our motivation stems from Laplacian growth models, which are characterized by the fact that the rate at which each portion of the boundary of the domain grows is determined by the harmonic measure of the domain from some given point, which we call {\em source}. The list includes Diffusion Limited Aggregation (\abbr{DLA}) \cite{WS}, its generalization -- Dielectric Breakdown Model (\abbr{DBM}) \cite{NPW}, Hastings-Levitov process \cite{HaLe}; Internal \abbr{DLA} (\abbr{IDLA})  \cite{DF,LBG}, abelian sandpiles and rotor aggregation \cite{LP}. It also includes {\em once-reinforced} random walk  with strong reinforcement (\abbr{ORRW}) \cite{Dav}, and 
{\em origin-excited} random walk (\abbr{OERW}) \cite{K2},  for which the set of visited vertices  is expected to form a limiting shape.  For models such as \abbr{DLA}, \abbr{DBM} or Hastings-Levitov, the source is at infinity, while 
in models such as \abbr{IDLA}, the source is at the origin. 
Whenever the source is fixed, the process of growing in 
time domains is Markovian. In contrast, the latter process is non-Markovian 
in \abbr{ORRW} or excited random walks, where the source is moving and 
depends strongly on the last hitting point of the boundary and current  
shape of the domain.
%, which complicates significantly the analysis. 

In general, lattice growth models of this type are
elusive, specially when the source is at infinity or when it is not fixed. A notable exception is \abbr{IDLA} for which
Lawler, Bramson and Griffeath
%and Gravner-Quastel \cite{GQ} 
obtained a shape theorem (see \cite{LBG}).
% and continuous time case respectively. 
Specifically, here particles are emitted in steps, one by one, from the source which is always located at the origin, and perform simple random walk until they visit an unvisited vertex. 
Each particle waits at the source until the previous one hits the external boundary, before being emitted. 
%In the second one they are emitted at Poissonian constant rate.  
Gravner-Quastel \cite{GQ} and Levine-Peres \cite{LP2} generalize \cite{LBG} and relate \abbr{IDLA} under more general, albeit still fixed, source locations to \abbr{PDE} free boundary problems (a Stefan problem in \cite{GQ}, and an obstacle problem in \cite{LP2} who also obtain analogous shape theorems for rotor-router and divisible sandpile models).
An interesting variant is the Uniform \abbr{IDLA},
where upon hitting the boundary, the particle (source) is moved 
at a point chosen at random uniformly in the domain, and 
it is shown in \cite{BDKL} that the limiting 
shape of Uniform \abbr{IDLA} is the Euclidean ball. 

Beyond these two examples, there is little understanding of such growth processes, 
despite substantial recent advances for first passage percolation.
% \abbr{FPP}),
%\marginnote{Should we cite Vladas-Stauffer, Vladas-Kesten, Sly, for \abbr{DLA}, others?}, 
In particular, it is conjectured that for both \abbr{ORRW} and \abbr{OERW}
the evolution leads to the formation of an asymptotic shape as time goes to 
infinity (see \cite{K1, K2}), but there is no clear vision on how to attack the problem. Recall that in \abbr{ORRW} the particle performs random walk on $\Z^n$, but each edge (or vertex) increases its conductance by a fixed strength $a>0$ after the first time it is traversed. A phase transition is 
expected in terms of $a$, with a limiting shape conjectured for 
all $a$ large enough. In the \abbr{OERW} model, the particle receives 
a (one-time) small drift towards the origin whenever it reaches 
an unvisited vertex (instead of the conductance change of the \abbr{ORRW}), 
and a shape theorem is conjectured to hold, 
no matter how small this positive drift is.
We refer the reader to \cite{BW, KZ} for background on excited random walks, and to \cite{BDKL,K2} for discussions on various \abbr{IDLA} type processes and 
reinforced walks, all of whom share certain similar features.
In particular, heuristically, whenever the self-interaction tends to 
attract the walker towards the bulk of its existing range, the boundary 
of the latter should change at a much slower rate than that 
of the walker, providing a natural setting to witness {\em averaging}. 

While non-lattice isotropic models are more amenable to rigorous 
analysis (see \cite{STV,NT}), this typically requires having random 
conformal maps, hence restricted to dimension $n=2$. 
By focusing instead on the evolutions of {\em star-shaped domains} in 
$\R^n$, 
we are able to handle any $n \ge 2$, and mention in passing that,
on the deterministic side, the works \cite{CM2,CM1}
%  of Carleson and Makarov,}
are close in spirit to our averaged equation \eqref{b-bar}. 

We consider here a general random growth model 
in $\R^n$ which is specified by two {\em rules} $F$, $H$ and a 
scaling parameter $\ep>0$. The rule $F$, which is allowed to 
depend on the whole geometry of the domain and the position of the source,
determines the (random) point at the boundary where the particle, upon 
starting at the {\it prescribed position}, called {\it source}, is going 
to hit the boundary of the domain. For example, $F$ may be the  
Harmonic measure at the boundary of the domain from the source. After the 
particle hits the boundary, the domain grows around the hitting point 
with a volume increase of $\ep$, followed by the particle jumping to the next source position, according to the rule $H$.

More precisely, fixing a small parameter $\ep>0$, we consider evolving domains $(D_t^\ep)_{t\ge 0}$ in $\R^n, n\ge2$, which form simply-connected star-shaped compact sets (i.e. they can be parametrized by a function $R_t^\ep$ defined on the sphere $\bS^{n-1}$). It is a pure jump Markov process that starts with an initial domain $D^\ep_0\ni0$ and {\em particle position} $x^\ep_0$ and evolves at a Poisson rate of $\ep^{-1}$ by increasing the domain around randomly chosen boundary points (or equivalently, spherical angles $\xi_t\in \bS^{n-1}$). The probability density for choosing boundary points to evolve is given by the hitting kernel $F(R^\ep_{t^-},x^\ep_{t^-},\cdot)$, which is a probability density on the sphere $\bS^{n-1}$. After each hitting at the boundary at a point $\xi_t$, the particle is instantaneously transported according to the specified rule $H(R^\ep_{t^-}, \xi_t)$ to a point in $\R^n$ that can depend on both the domain and the last hitting position. Contrary to the rule $F$, we note that $H$ is  deterministic in this article. In principle, one can consider a more comprehensive model where $H$ is a probability density function on $\R^n$, representing random transportation of the particle after each hitting of the current domain boundary.

The process $(R^\ep_t,x^\ep_t)_{t\ge 0}$ of evolving domains in $\R^n$ together with the position of the driving particle coupled to the former is, by construction, Markov (though each marginal is in general non-Markovian). The aim is to construct a continuum simplified model of ``random walk interacting with its range", allowing for general hitting kernel and non-trivial redistribution after each interaction, 
while inferring whether the evolving domain has an asymptotic shape.

\begin{figure}[h]
\[
\begin{array}{ccc}
  \includegraphics[width=3cm]{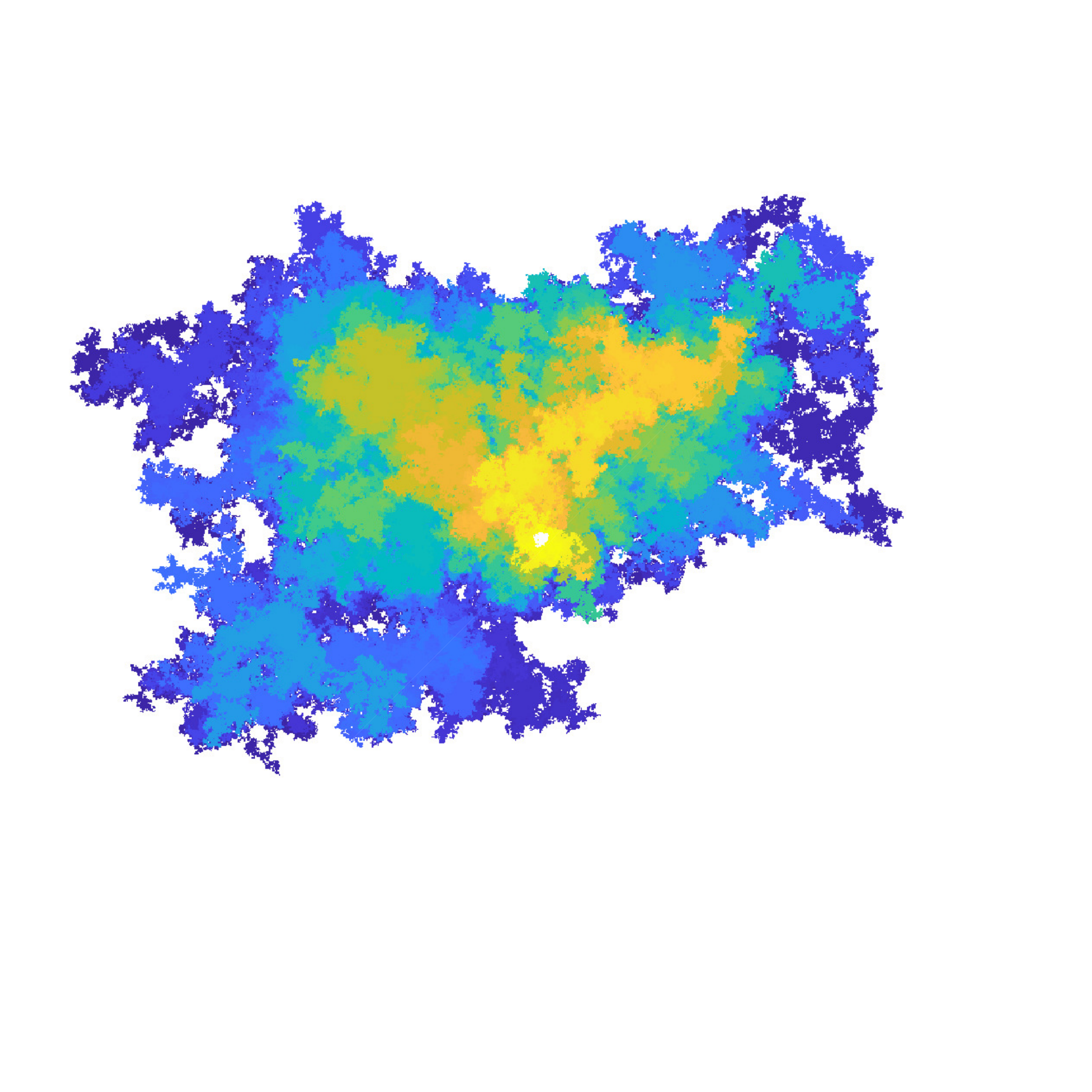}&  \includegraphics[width=3cm]{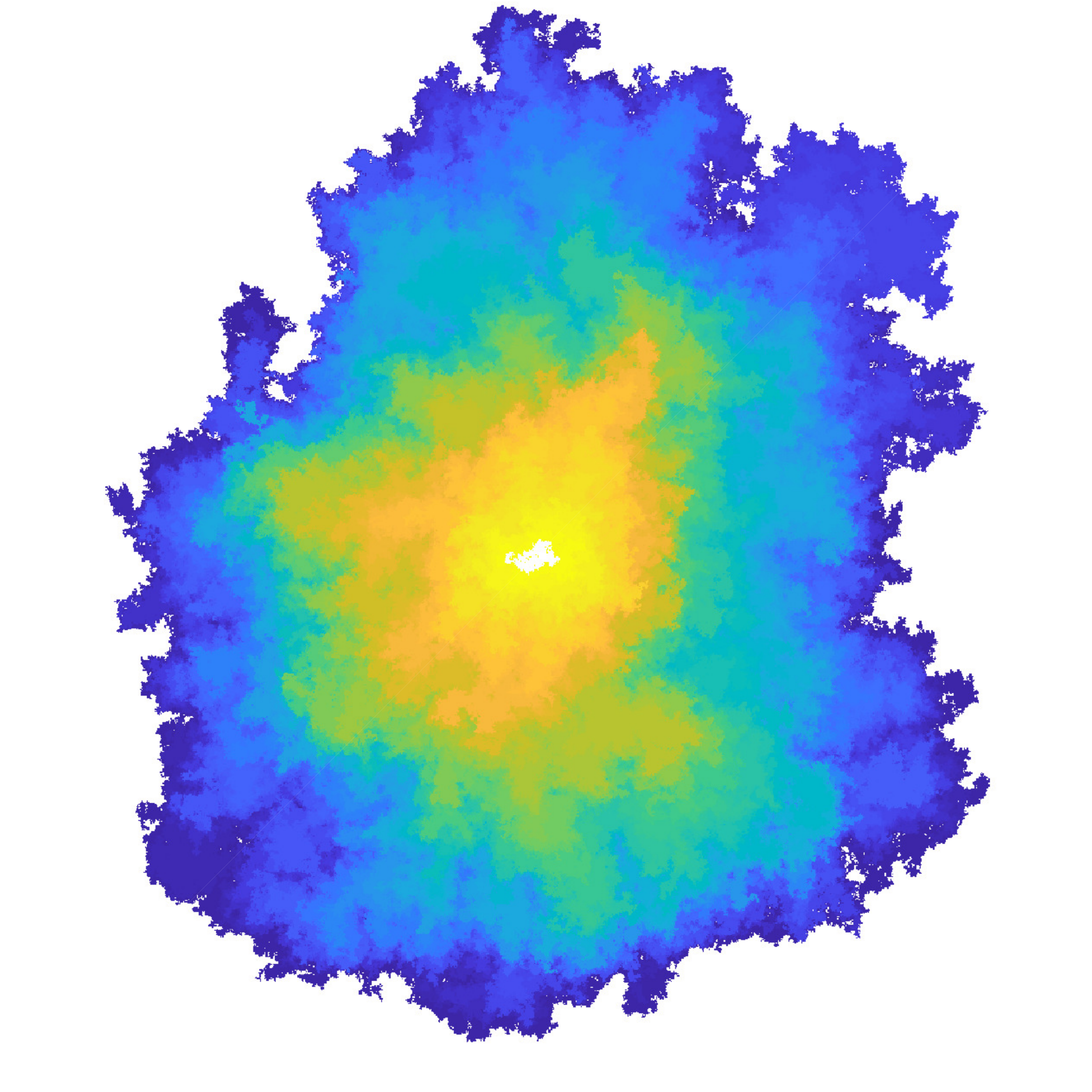}&  \includegraphics[width=3.5cm]{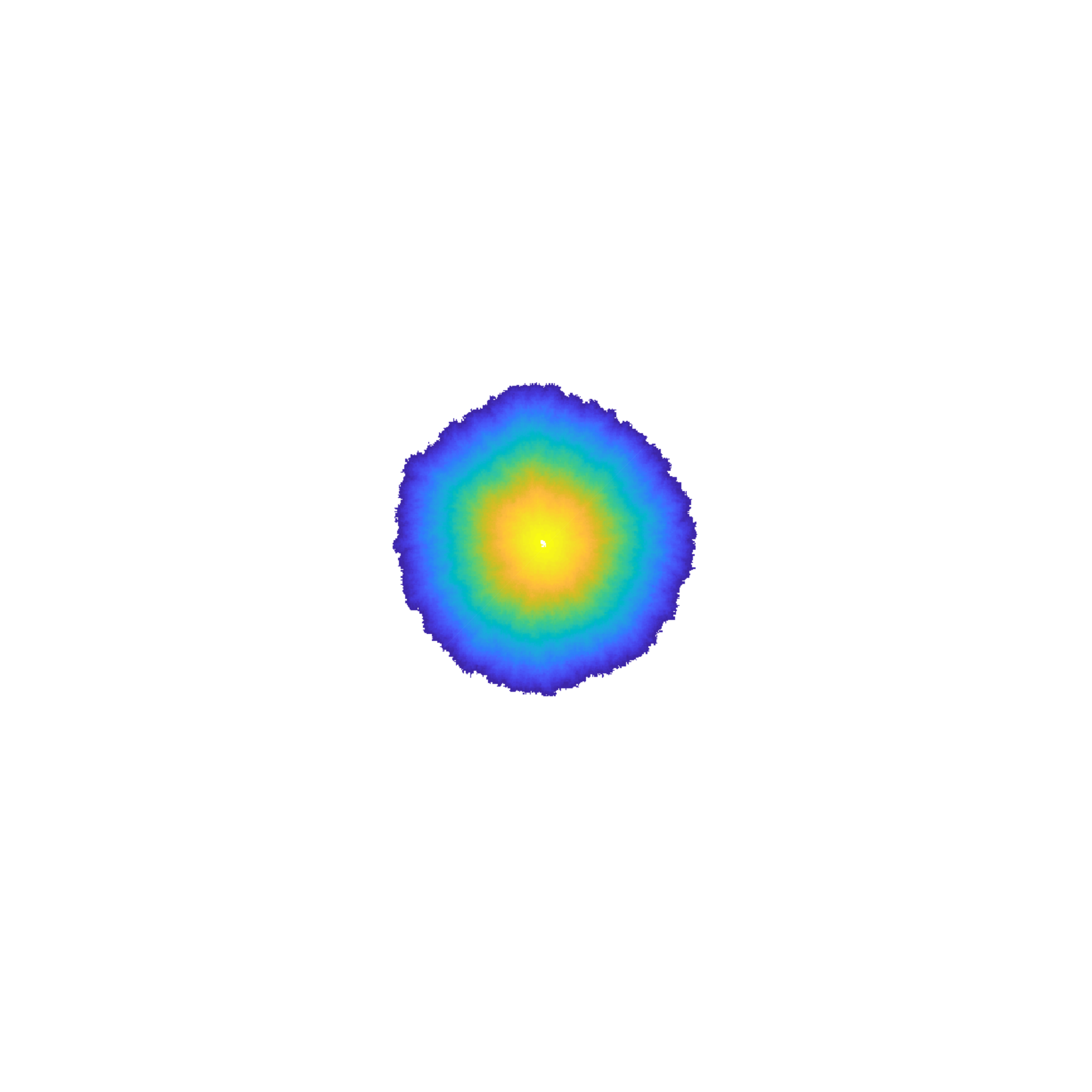}
  \end{array}
  \]
 \caption{Vertex once-reinforced random walk 
on $\Z^2$ with strength parameter $a=2$ (left), $a=3$ (middle) and $a=100$ (right) in a box of size 2000. The color of each vertex is proportional to the square root of its first visit time by the walk.}
\label{fig:1}
\end{figure}

\begin{figure}[h]
\[
 \begin{array}{ccc}
  \includegraphics[width=3cm]{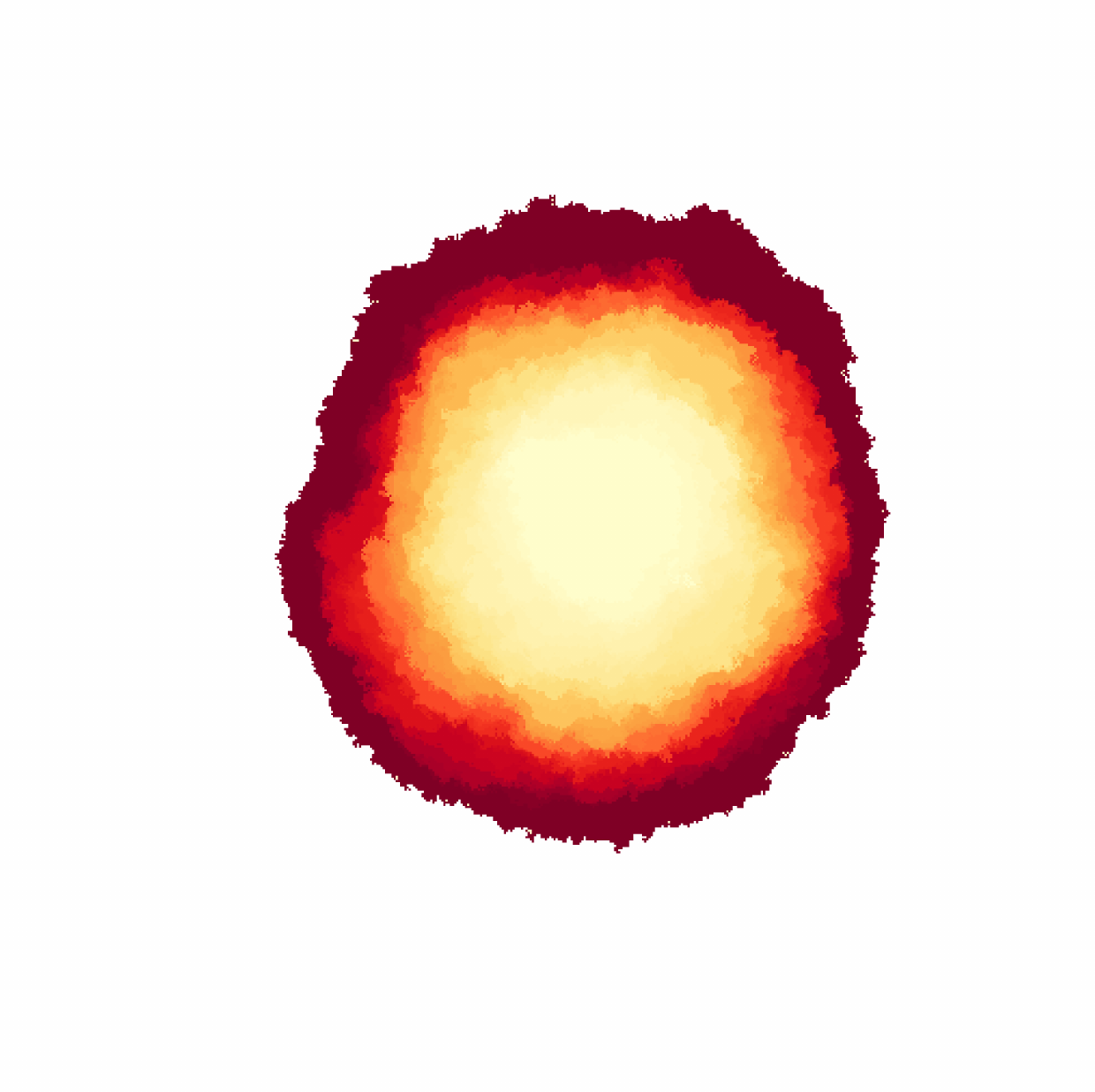}&  \includegraphics[width=3cm]{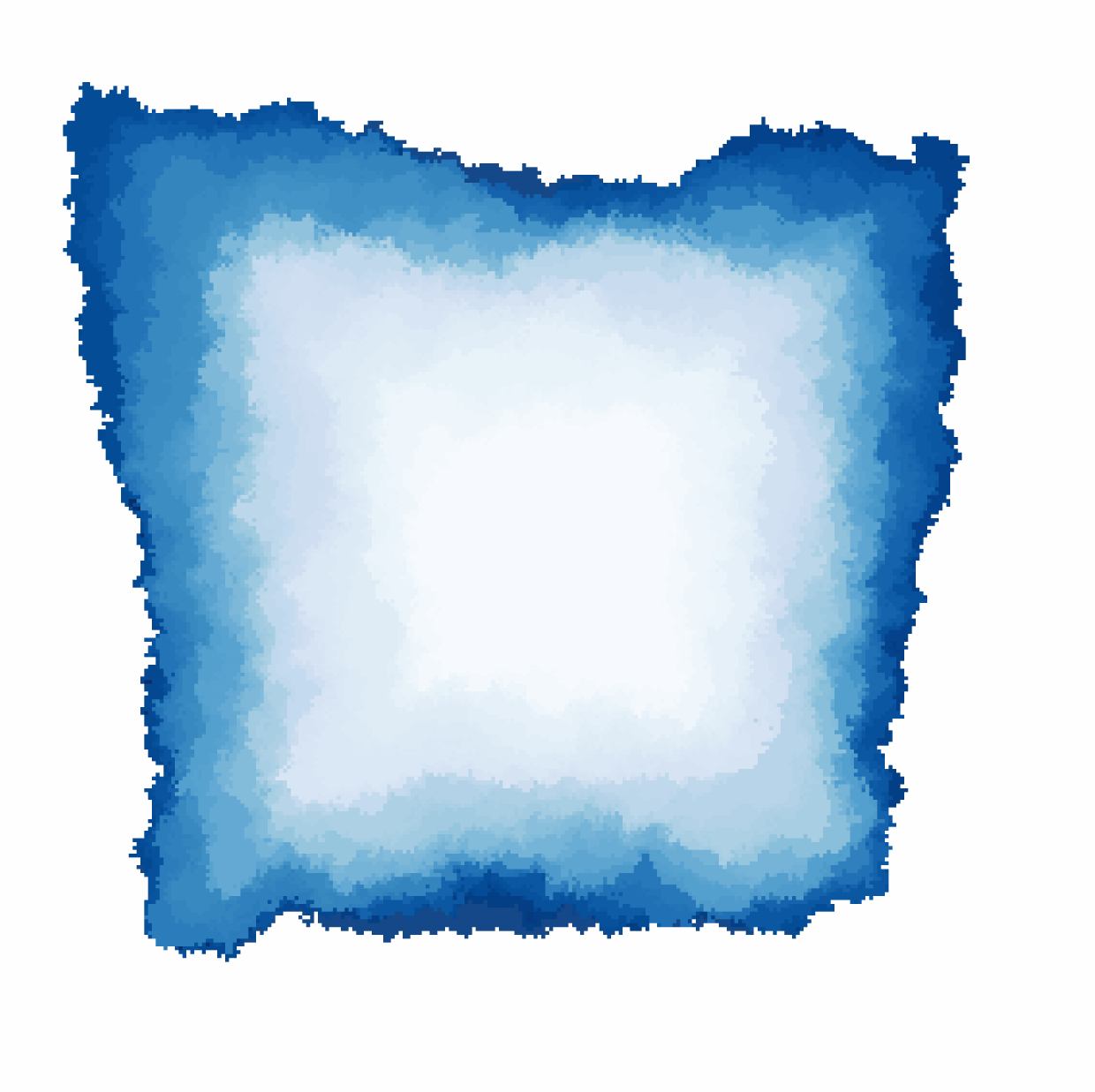}&  \includegraphics[width=3cm]{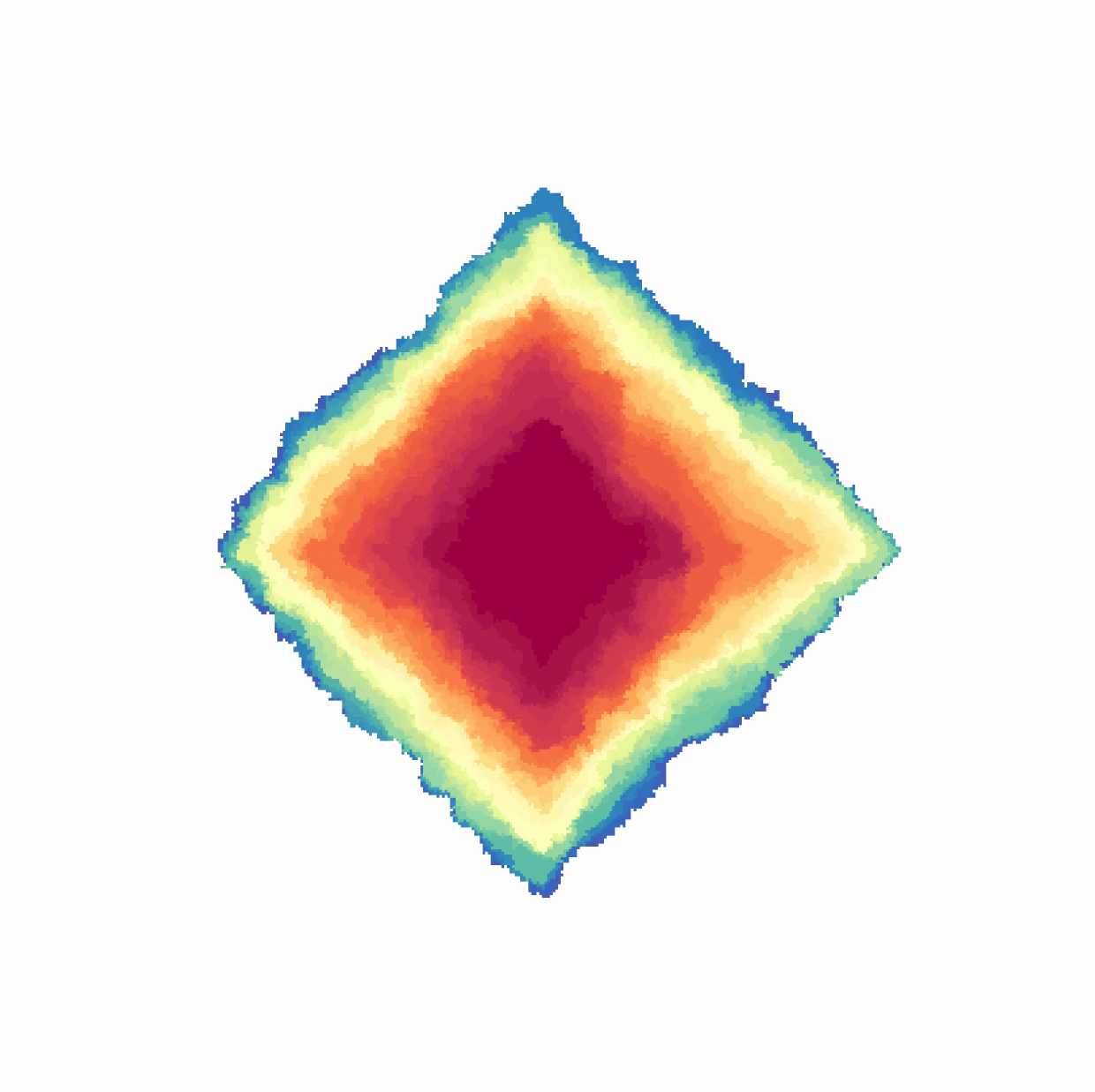}
 \end{array}
 \]
 \caption{
%Excited towards the origin random walk in 
Origin-excited random walk on $\Z^2$ with three different excitation rules. Left: choose a coordinate with probability proportional to its absolute value and move one unit towards the origin in the chosen coordinate. Middle: move one unit towards the origin in the direction of the coordinate with the largest absolute value. Right: move one unit towards the origin in each coordinate. Each site is colored according to the first visit time.
 % (with different colors representing different times).
}
  \label{fig:2}
\end{figure}

The {\it averaging principle} has been extensively studied in the theory of dynamical systems, see e.g. \cite{BM, Ce,FW, Kh, Ki, PSV, Ve} and references therein. Usually one identifies a {\em slow variable} and a {\em fast 
variable}. Under suitable conditions the fast variable achieves equilibrium in a time scale for which the slow variable does not evolve macroscopically. Hence, as the scale parameter $\ep \to 0$ one expects the slow variable to move according to a system where
%in which 
the fast variable is integrated with respect to its invariant measure, which may depend on the slow variable as well. In our model, the  averaging property that one expects 
in models such as \abbr{ORRW} and \abbr{OERW} is explicitly shown in 
terms of the process $(R_t^\ep, x_t^\ep)$, where as $\ep \to 0$, the 
variable $R_t^\ep$ serves as the slow variable, while 
$x_t^\ep$ acts as the fast one
(and though the literature on averaging is large, we found no 
averaging principle that fits our case, involving a Markov jump process in infinite dimensions). The averaging principle is close in spirit to hydrodynamic limits, 
a standard tool in the study of interacting particle systems (see \cite{DP,KL,Va} and references therein). A hydrodynamic limit is proved  
for a continuous version of \abbr{IDLA} in \cite{GQ}, 
yielding in turn a shape theorem, thanks to 
the scale invariance of this model (as in Lemma \ref{lem:coup} below). 
As mentioned before, in this process particles are emitted from fixed sources.
% While hydrodynamic limits lead to macroscopic 
% shape results \blue{as soon as one inverts the scaling} (see e.g. \cite{GQ} in the case of the continuous-time version of \abbr{IDLA} and \cite{Se} in the case of ballistic deposition), it is mostly for systems in which 
% the particles are emitted from {\it{fixed}} sources. 
One of our goals here is to derive similar results for
self-interacting random walks, where the source is clearly moving.

Under certain mild conditions on our model features (namely, the rules $F$ and $H$), 
we prove in Theorem \ref{thm1} an averaging principle. It
allows us to identify the limiting infinite-dimensional \abbr{ODE} governing the evolving domain as the slower dynamics of the pair, yielding  
in Theorem \ref{thm2} the limiting shape result as a
stationary solution of the limiting \abbr{ODE}. Then, in 
Theorem \ref{harm:measure} we verify our assumptions 
for a certain class of models, and
in some instances compute explicitly their limiting shape.

Let $\bS^{n-1}$ be the unit sphere in $\R^n, n\ge2$ equipped with its 
Euclidean surface area measure $\sigma(\cdot)$ and for 
any $1\le p\le\infty$ let $\|f\|_p$ denote the $L^p(\bS^{n-1})$ norm of $f$ with respect to $\sigma(\cdot)$. We denote by $C_+(\bS^{n-1})$ the 
space of {\it strictly positive} continuous functions on $\bS^{n-1}$.

\begin{defn}
A simply-connected compact set $D\subseteq\R^n$ is called {\it{star-shaped}} with respect to $0\in D$, if the line segment connecting $0$ and any $x\in \partial D$ is entirely contained in $D$. 
\end{defn}
\noindent Any star-shaped $D$ is uniquely represented by 
a non-negative function $r : \bS^{n-1} \to \R_+$ as
\begin{align*}
D=\big\{x\in \R^n:  x=\rho\theta, \,\theta\in \bS^{n-1}, \, 0\le \rho\le  r(\theta)\big\}\,.
\end{align*}
Hereafter, by a slight abuse of notation, we identify any
$r \in C_+(\bS^{n-1})$ with its graph, which encloses 
a star-shaped domain $D$ and denote by $Leb(r)$ the 
Lebesgue measure (or volume) of that domain $D$. Namely,
\begin{align*}
Leb(r)=n^{-1}\int_{\bS^{n-1}}r(\theta)^nd\sigma(\theta)=n^{-1}\|r\|_n^n \, .
\end{align*}
Let $\mathcal D(F)$ be an open subset of $C_+(\bS^{n-1}) \times \R^n$ 
such that 
$\{x : (r,x) \in \mathcal D(F)\}$ is non-empty
for any $r \in C_+(\bS^{n-1})$. The measurable map
\begin{align*}
%\label{def:F}
F\colon \mathcal D(F) \subset C_+(\bS^{n-1})\times \R^n
\to C_+(\bS^{n-1}),
\end{align*}
assigns to each $(r,x) \in \mathcal D(F)$ a strictly positive, continuous
probability density function $F(r,x,\xi)$ with respect to $\sigma(\cdot)$. This function represents the rule whereby a particle starting from $x\in \R^n$ chooses a point 
$r(\xi)\xi$, $\xi \in \bS^{n-1}$ at the boundary of the domain 
enclosed by $r$, to be
the center of the (small) bump we add on the domain boundary $r$.
The measurable map
\begin{align*}
%\label{def:H}
H(r,\xi):C_+(\bS^{n-1})\times \bS^{n-1}\to\R^n
\end{align*}
assigns for each $r\in C_+(\bS^{n-1})$ and $\xi \in \bS^{n-1}$ the 
transported (source) location $x=H(r,\xi)$ of a particle 
that hits the domain boundary $r$ at angle $\xi$. Assuming
that $(r',H(r,\xi)) \in \mathcal D(F)$
for any $r'\ge r$ and $F(r,x,\cdot) d\sigma$-a.e. $\xi$,
guarantees that a.s. the iterative composition of 
the rules $H$ and $F$ is well defined (per our dynamics \eqref{fast-component}). The small bump we add
is in the form of a suitable spherical approximate identity 
$g_\eta(\cdot)$, as defined next.

\begin{defn}\label{def:apx-iden}
A collection of continuous functions 
$g_\eta : [-1,1] \to \R_+$ is called a {\em spherical approximate identity} 
if $1 \star g_\eta = 1$,
$\|f \star g_\eta \|_2\le \|f\|_2$ and $\|f \star g_\eta -f\|_2\to 0$ as $\eta\to 0$, for every $f\in L^2(\bS^{n-1})$, where (see \cite[(2.1.1)]{DX}),
% convolution on the sphere is defined by
\begin{align}\label{integ-1}
(f \star g_\eta)(z):=\frac{1}{\omega_n}\int_{\bS^{n-1}}f(\theta) 
g_\eta(\langle z,\theta\rangle) d\sigma(\theta), \quad z\in \bS^{n-1},
\end{align} 
$\omega_n=\sigma(\bS^{n-1})=\frac{(2\pi)^{n/2}}{\Gamma(n/2)}$ 
is the surface area of $\bS^{n-1}$, 
and $\langle z,\theta\rangle$ denotes the scalar product 
associated with the Euclidean norm $|\cdot|$ in $\R^n$. We call such collection \emph{local} 
if in addition
$g_\eta(\langle z, \cdot \rangle)$ 
is supported on the spherical cap of (Euclidean) radius $2\eta$ centered 
at $z$ and $\eta^{n-1} \|g_\eta\|_\infty$ are uniformly 
bounded.
\end{defn}

Utilizing \cite[Section 2.1]{DX} we characterize in 
Lemma \ref{lem:apx-iden} the collections $g_\eta$ 
that form a local spherical approximate identity
(see also Figure \ref{fig:4}).
\begin{figure}[h]
\[
 \begin{array}{cc}
  \includegraphics[width=4cm]{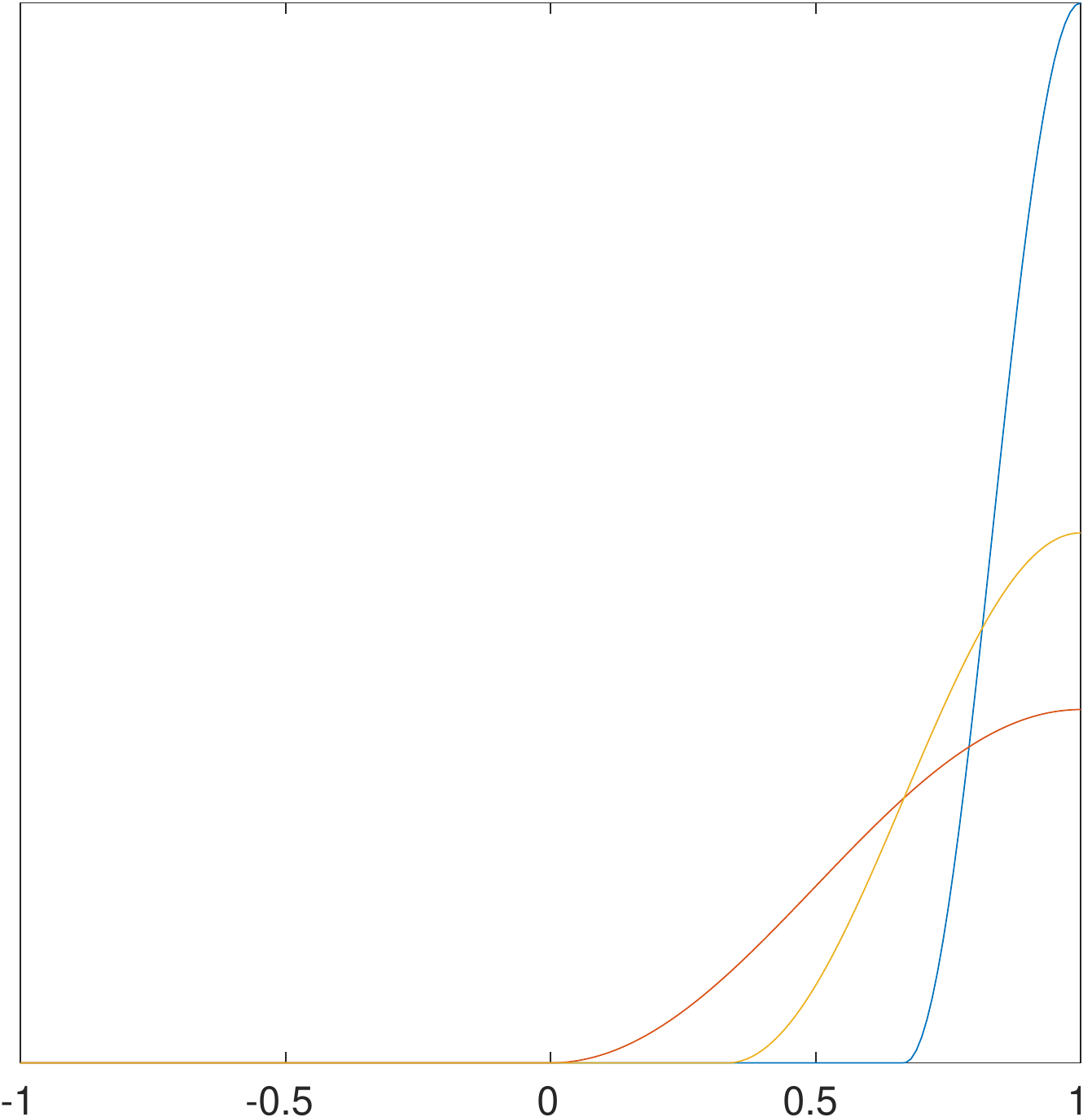}&  \includegraphics[width=4.5cm]{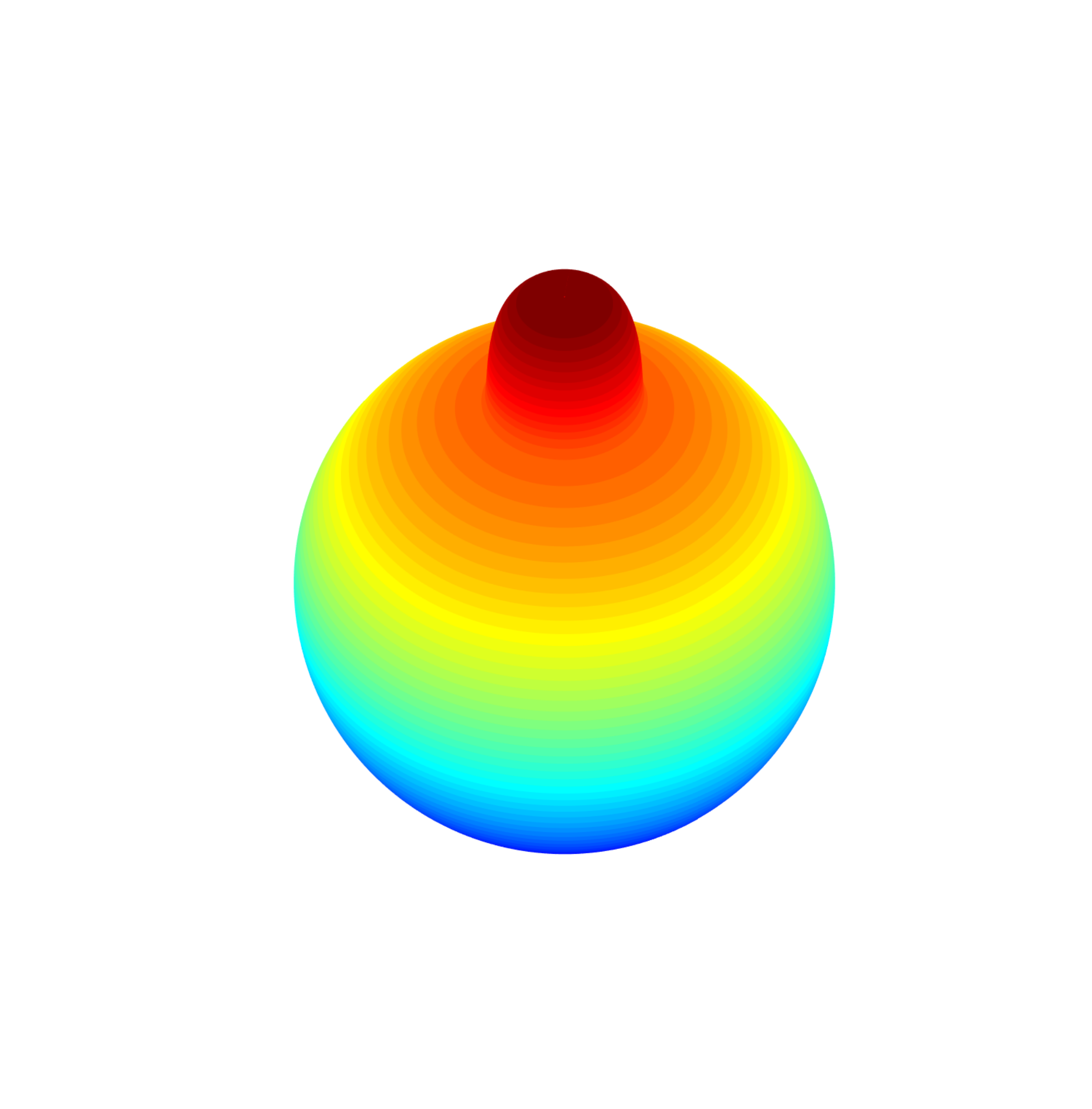}
  \end{array}
 \]
 \caption{Left: Functions $g_\eta$ for different values of $\eta$. Right: Function $g_\eta(\langle z,\cdot \rangle)$ defined on the sphere $\mathbb S^2$ with $z=(0,0,1)$.}
  \label{fig:4}
\end{figure}
%%%%%%%%%%%%%%%%%%%%%%%%%

Throughout we set the positive function on $\mathcal{D}(F)$,
\begin{align}\label{def:y}
y_{r,x}:= \omega_n \int_{\bS^{n-1}}r(\theta)^{n-1}F(r, x,\theta)d\sigma(\theta) \,.
\end{align}
Noting that for $\xi$ of density $F(r,x,\cdot) d\sigma$ 
% one has that 
\begin{equation}\label{eq:Leb-add}
\lim_{\ep,\eta \to 0} 
\ep^{-1} \E \big[Leb(r+\ep g_\eta(\langle \xi,\cdot\rangle))-Leb(r)) \big]
= y_{r,x} 
\end{equation}
(c.f. proof of Proposition \ref{vol-growth}),
we add at each update a bump 
$(\ep/y_{r,x}) \, g_\eta(\langle \xi,\cdot \rangle)$ 
% centered at angle $\xi$ 
on the current boundary $r$,
where $\{g_\eta\}$ is a \emph{local} spherical approximate identity,
so that for $\ep \ll 1$, the volume 
of $D_t^\ep$ should grow at a nearly constant, unit rate. 
Using the $\ep$-dependent 
\begin{align}\label{eta}
\eta(\ep,r,x):= \ep^{{1}/{n}} \, y_{r,x}^{-1/{(n-1)}} \,,
\end{align}
as our spherical-scale parameter yields 
% in view of Lemma \ref{lem:apx-iden}
a bump 
$\ep y^{-1}_{r,x}g_\eta$ on the boundary $r$ 
of about $\ep^{1/n}$ height (in the radial direction), 
uniformly in $(r,x)$,
% . Further, this choice corresponds 
% in the construction of Lemma \ref{lem:apx-iden} to a bump 
which is supported 
in case of a Euclidean ball of unit surface area
(namely, $r \equiv \omega_n^{-1/(n-1)}$),
on spherical caps of radius $2 \ep^{1/n}$. Clearly, when 
adding such $\ep$-dependent bumps to our boundary function, 
the star-shaped domain evolves by a localized bump 
and the new domain remains star-shaped.
%%%%%%%%%%%%%%%%%%%%%%%%%%%
%%%%
Specifically, fixing $\ep\in(0,1]$ and starting at some 
$(R_0^\ep,x_0^\ep)$ we construct the 
Markov jump process $(R^\ep_t,x^\ep_t)_{t\ge 0}$ of jump rate $\ep^{-1}$ 
and state space $C_+(\bS^{n-1})\times  \R^n$, as follows. 
For a sequence $\{T^\ep_i\}_{i\in\N}$ of auxiliary Poisson arrival 
times of rate $\ep^{-1}$, starting at $T_0^\ep=0$, we freeze
$(R^\ep_t,x^\ep_t)$ during each of the intervals $[T^\ep_i,T^\ep_{i+1})$,
while as each $t=T^\ep_i$, $i \ge 1$, 
conditional on the canonical filtration 
\begin{align*}
\cF_{t^-}:=\sigma\{R^\ep_s, x^\ep_s, \xi_s: \, s\le t^-\} \,,
\end{align*}
let 
\begin{equation}
\xi_t \overset{d}{\sim} F( R_{t^-}^\ep, x^\ep_{t^-},\cdot) \,.
\label{bump-loc}
\end{equation}
That is, given $\mathcal F_{t^-}$ the random
$\xi_t \in \bS^{n-1}$ has the density $F( R_{t^-}^\ep, x^\ep_{t^-},\cdot)$ 
with respect to $\sigma(\cdot)$.
%independently of 
Then, 
update $(R_{t^-}^\ep,x^\ep_{t^-})$ according to 
\begin{align} 
 R_t^\ep(\theta)&= R_{t^-}^\ep(\theta)+ \frac{\ep }{y_{ R_{t^-}^\ep, x^\ep_{t^-}}}g_{\eta(\ep,R^\ep_{t^-},x^\ep_{t^-})} (\langle\xi_t, \theta \rangle), \quad \theta\in \bS^{n-1},            \label{slow-component}\\
x^\ep_t&=H(R^\ep_{t^-},\xi_t)   \label{fast-component}
\end{align} 
(recall the definitions \eqref{def:y} of $y_{r,x}$ and \eqref{eta} of 
$\eta(\ep,r,x)$).
\begin{figure}[h]
\[
 \begin{array}{ccc}
  \includegraphics[width=2cm]{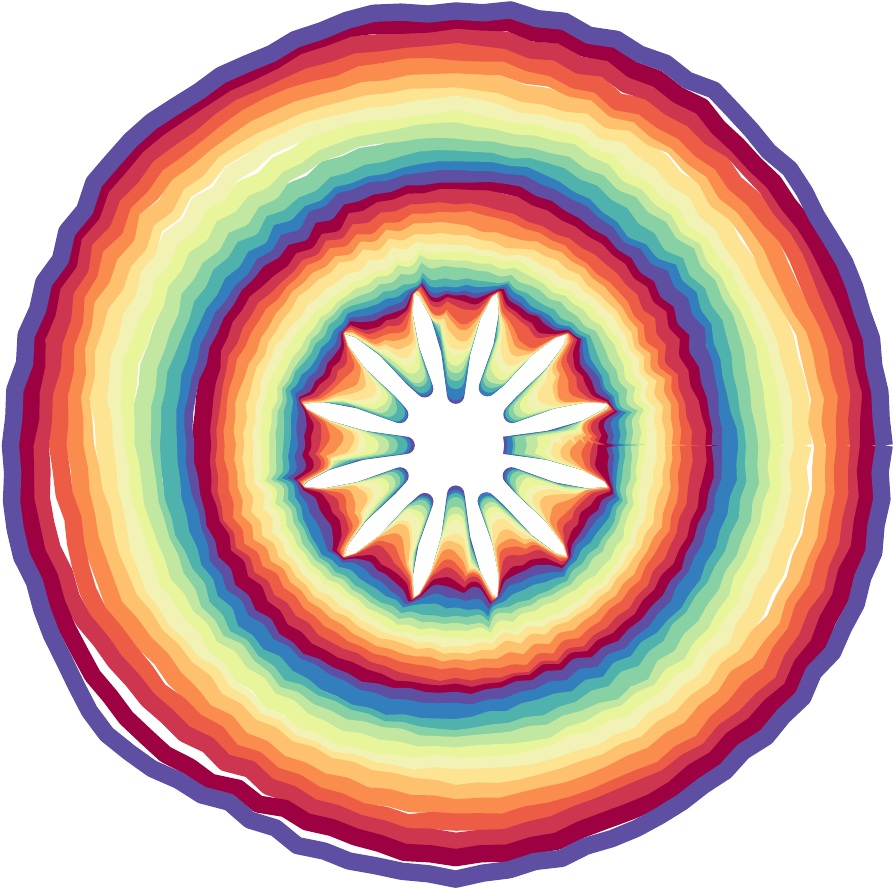}&  \includegraphics[width=2cm]{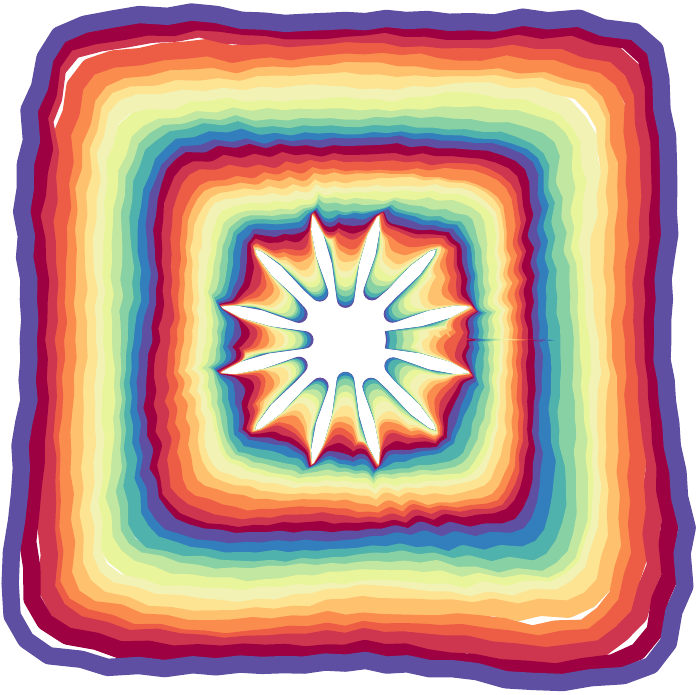}&  \includegraphics[width=2cm]{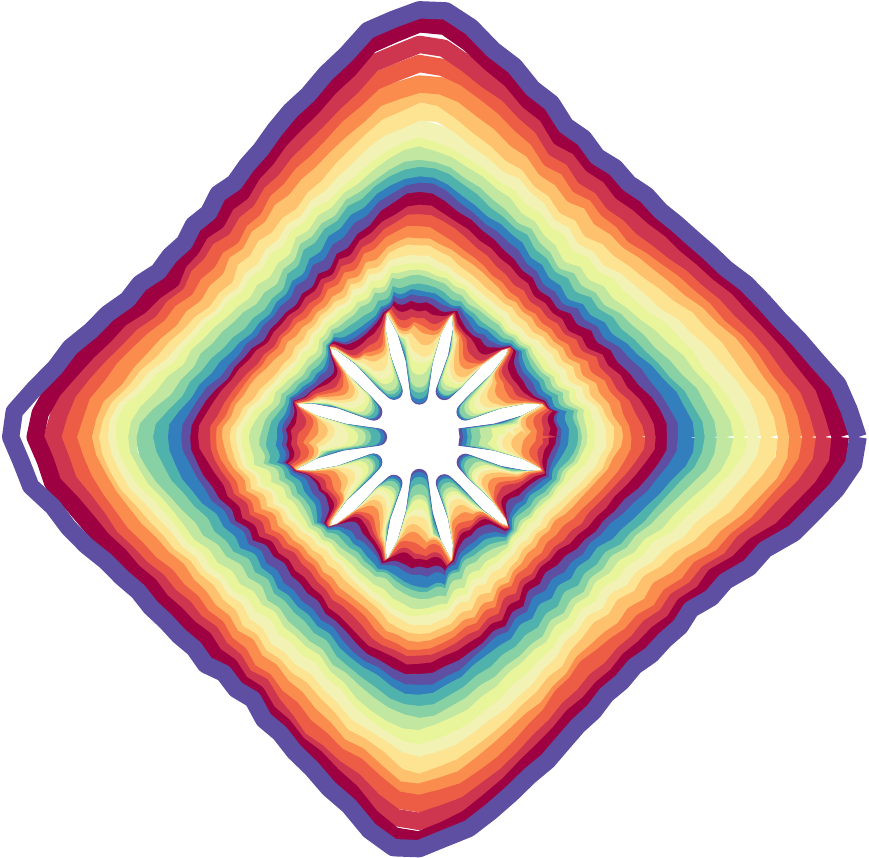}\\
  \includegraphics[width=3cm]{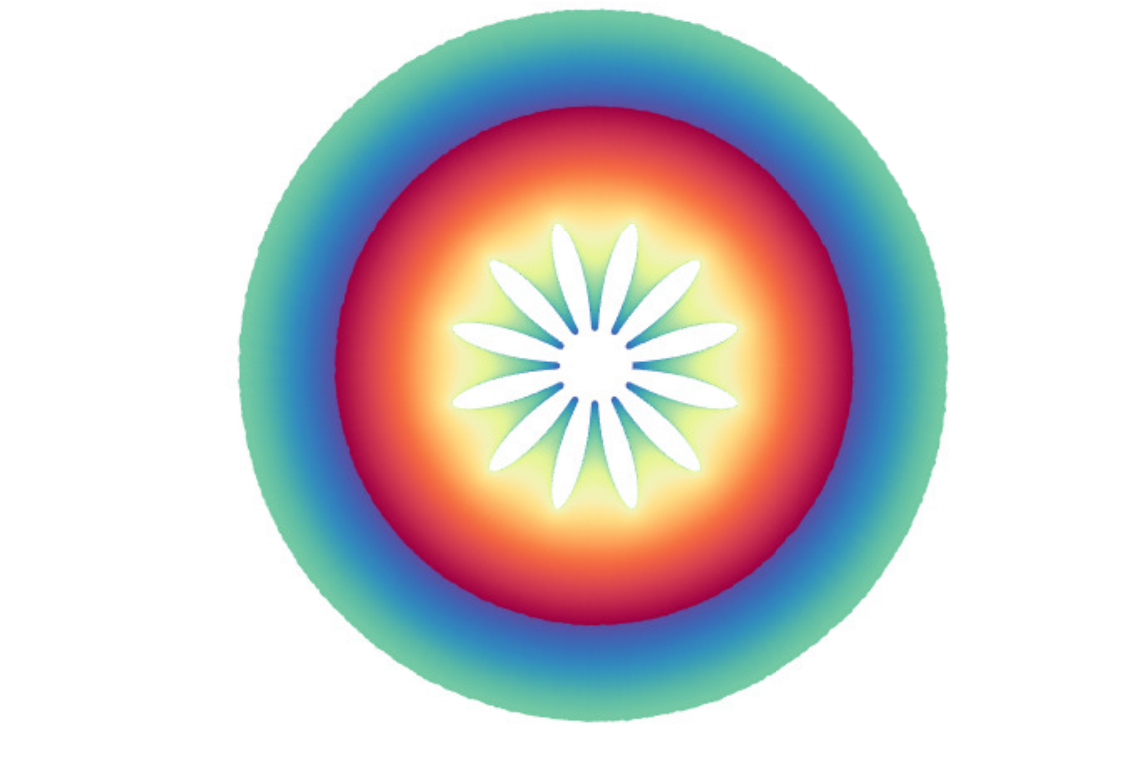}&  \includegraphics[width=3cm]{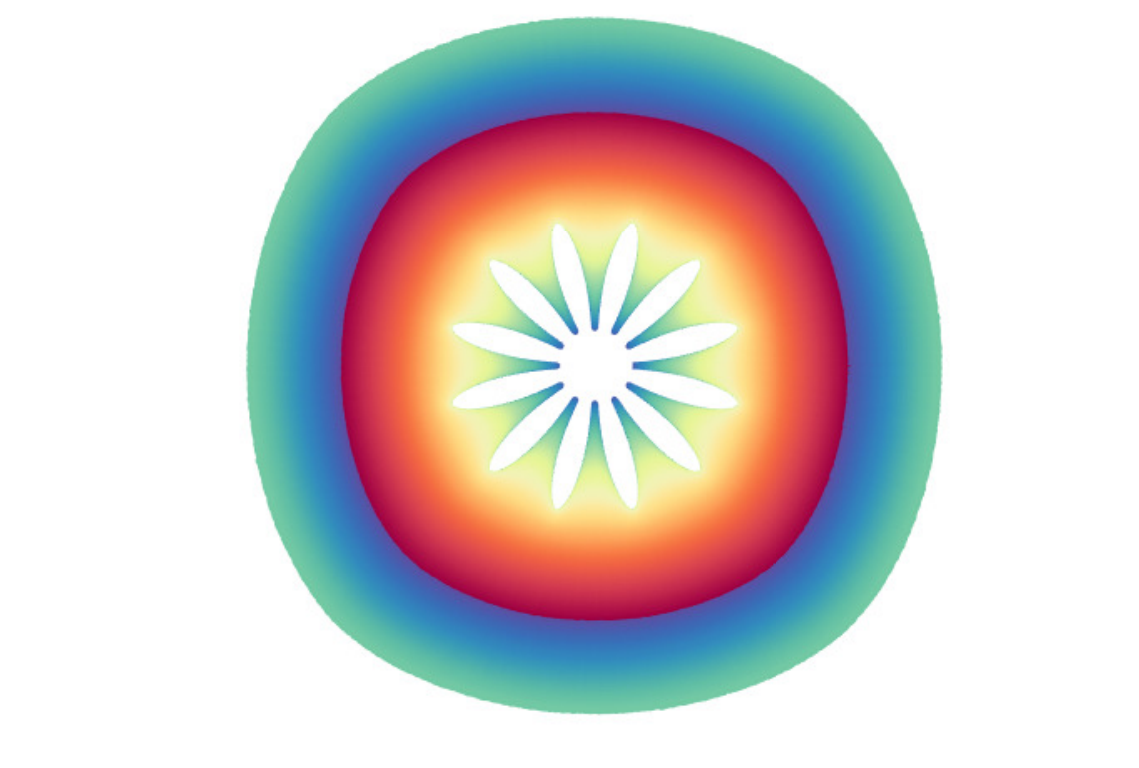}&  \includegraphics[width=3cm]{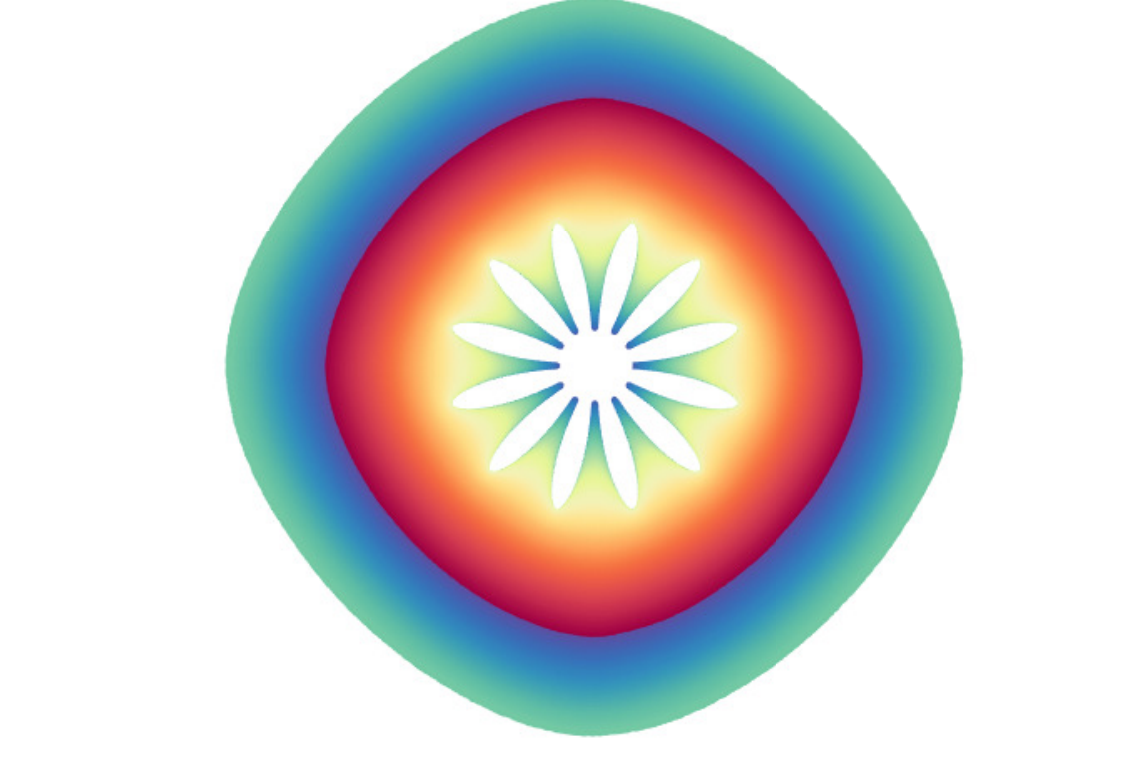}\\
  \includegraphics[width=3cm]{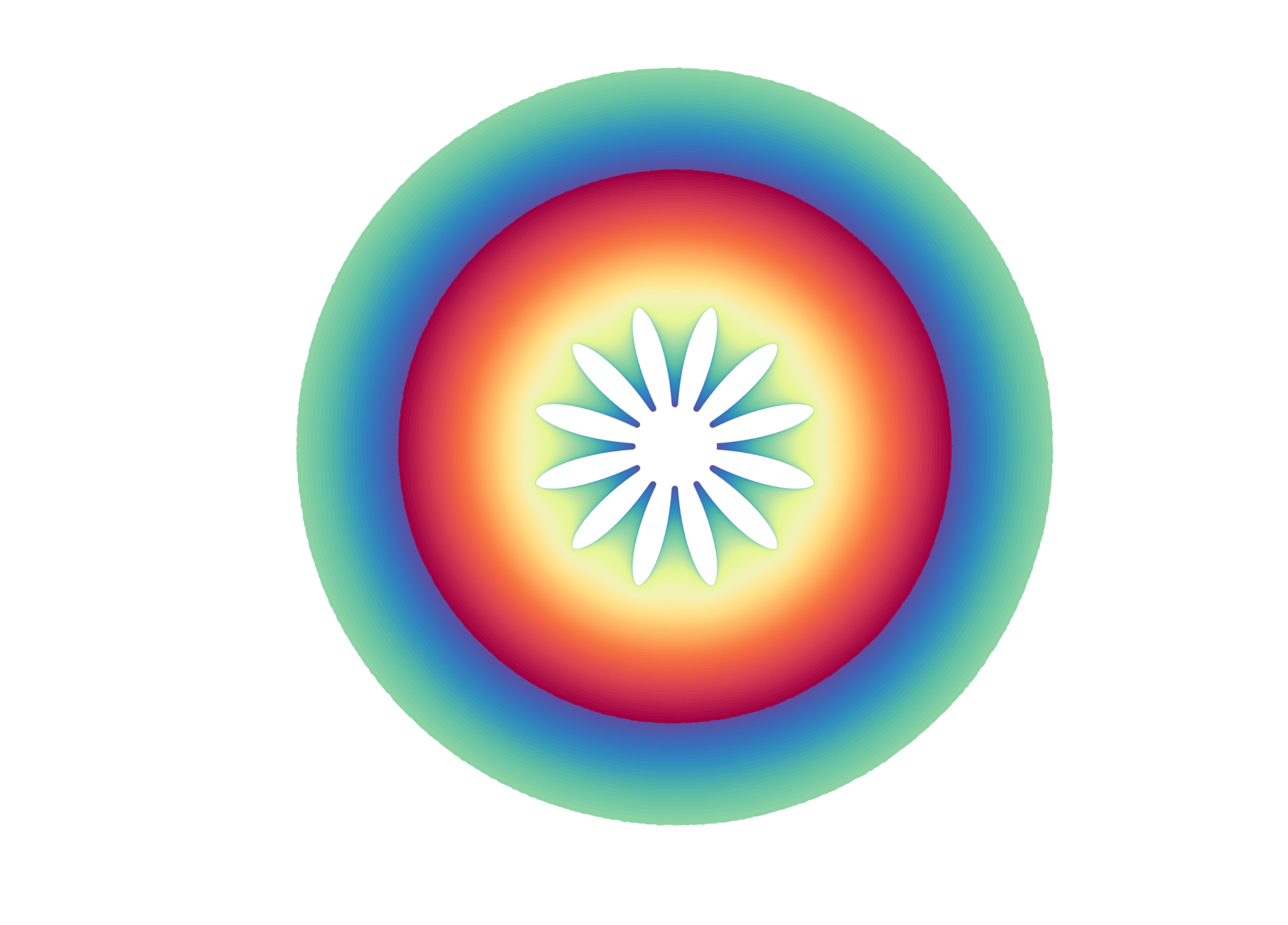}&  \includegraphics[width=3cm]{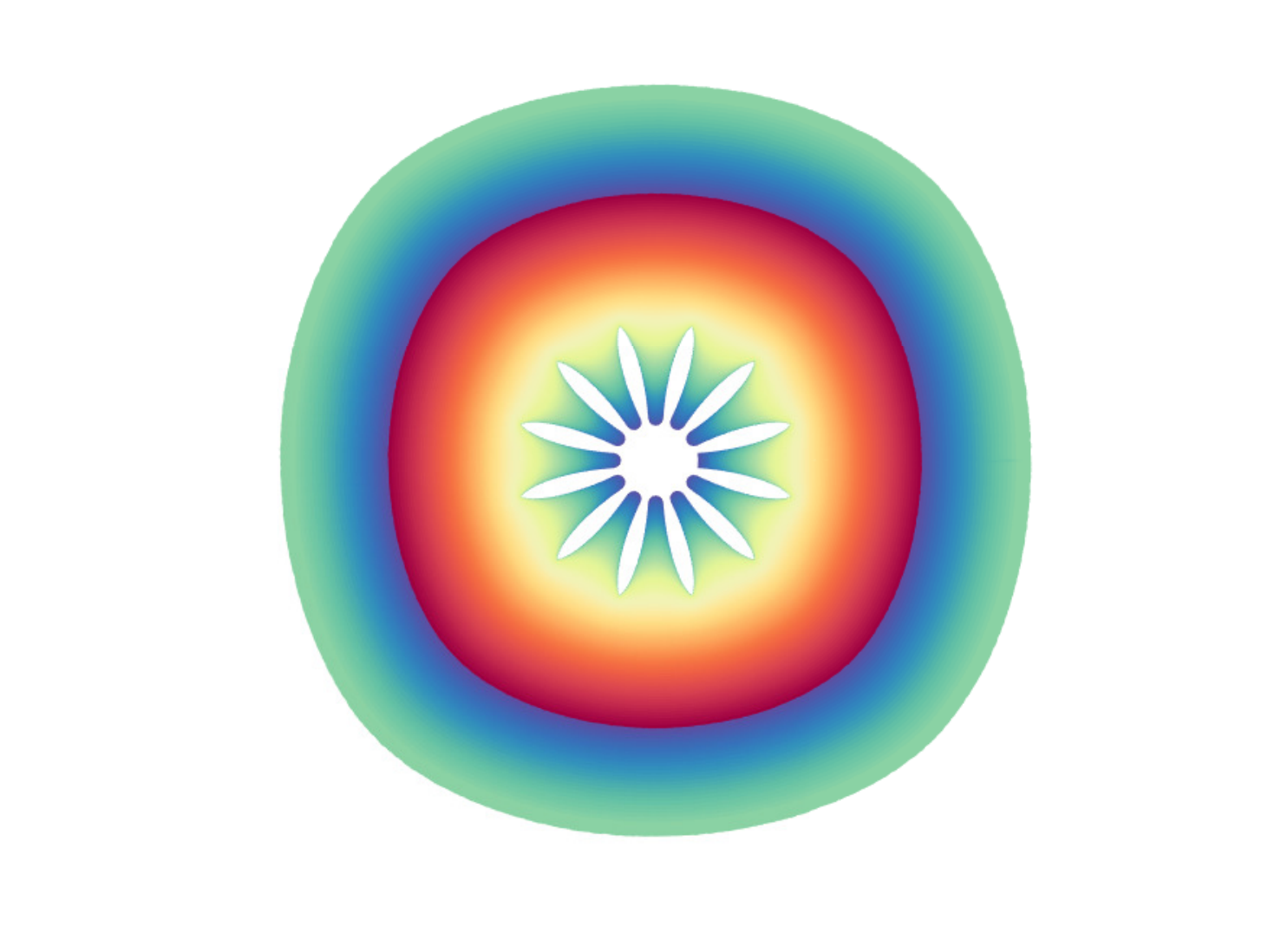}&  \includegraphics[width=3cm]{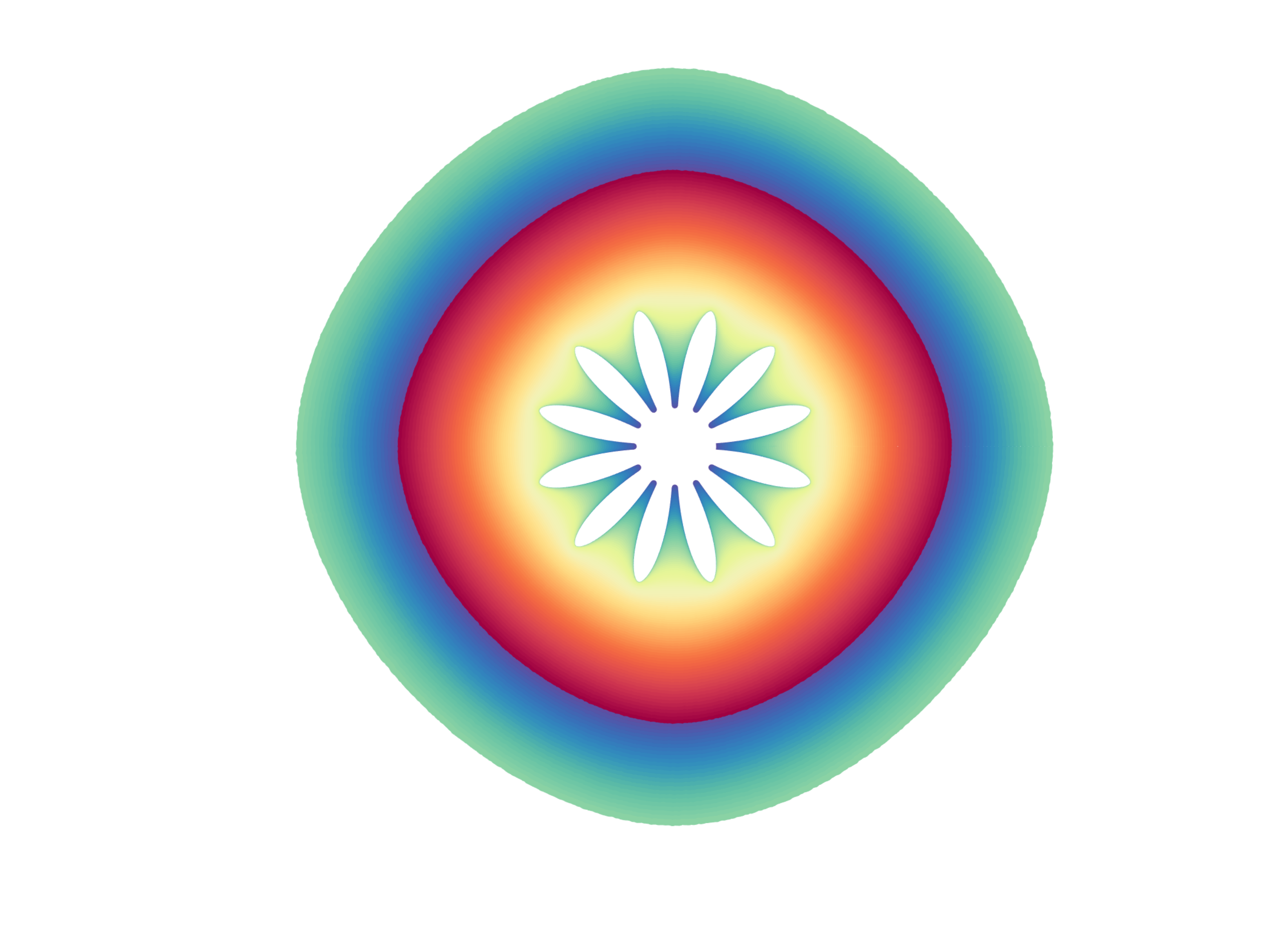}\\
 \end{array}
 \]
 \caption{The shape process \eqref{slow-component} on $\R^2$ with $F(r,x,\cdot)$ given by the harmonic measure on $r$ from $x$ and different transportation rules $H$: {\bf First row} $\ep=10^{-4}$. {\bf Left}: $H(r,\xi)= (r(\xi)-1)_+\xi$. {\bf Middle}: $H(r,\xi)=$ move one unit towards the origin from $r(\xi)\xi$ in the coordinate with larger absolute value. {\bf Right}: $H(r,\xi)=$ move one unit towards the origin in both coordinates. {\bf Second row} $\ep=10^{-6}$. {\bf Left}: $H(r,\xi)= (r(\xi)-1)_+\xi$. {\bf Middle}: $H(r,\xi)= \big(r(\xi)-\frac{|\xi|_\infty}{|\xi|_2}\big)_+\xi$. {\bf Right}: $H(r,\xi)= \big(r(\xi)-\frac{|\xi|_1}{|\xi|_2}\big)_+\xi$. {\bf Third row},  $\ep=10^{-6}$. {\bf Left}: $H(r,\xi)= .9r(\xi)\xi$. {\bf Middle}: $H(r,\xi)= \big(1-\frac{|\xi|_\infty}{10|\xi|_2}\big)r(\xi)\xi$. {\bf Right}: $H(r,\xi)= \big(1-\frac{|\xi|_1}{10|\xi|_2}\big)r(\xi)\xi$. The processes start from the white-colored region in the shape of a sunflower. Different colors represent different time-epochs (proportional to $t^2$). The linear-in-time evolution of these snapshots identifies the asymptotic $O(\sqrt{t})$ for the diameter growth. As time, hence diameter, increases, %the drifts get smaller in comparison and 
the processes start to ``feel'' the different drifts, tending to different asymptotic shapes: sphere, square or diamond depending on the choice of $H$ (similarly to what we saw for different excitation rules in Figure \ref{fig:2}). The final time is 16. For a dynamic evolution of one instance of the process, the interested reader may consult \url{https://youtu.be/DfsYWZnQA9Q}.}
\label{fig.shapes.process}
\end{figure}
The generator $\cL^\ep$ of the Markov process $(R^\ep_t,x^\ep_t)_{t\ge0}$ is
\begin{equation}\label{generator}
\begin{aligned}
&\big(\cL^\ep \mathsf{f}\big)( r, x)\\
&:=\frac{1}{\ep}\Big[\int_{\bS^{n-1}} \mathsf{f}\Big(r+\ep y^{-1}_{r, x}g_{\eta(\ep,r,x)}(\langle\xi,\cdot\rangle), H(r,\xi)\Big)F(r,x,\xi)d\sigma(\xi)-\mathsf{f}( r, x)\Big],
\end{aligned}
\end{equation}
for any $\mathsf{f}:C_+(\bS^{n-1})\times \R^n\to\R$ in the domain of $\cL^\ep$. 
% Recall that $\mathcal D(F)$ is the domain of $(r,x)\mapsto F(r,x,\cdot)$. 
For $(r,x) \in \mathcal D(F)$ and $\theta\in \bS^{n-1}$ let
\begin{align*}
&b(r,x)(\theta):=\frac{\omega_n}{y_{r,x}} F(r,x, \theta) \,, 
% &
\qquad  
b^\ep(r,x) := b(r,x) \star g_{\eta(\ep,r,x)}\,,\\
&h(r,x):=\int_{\bS^{n-1}} H(r,\xi) F(r,x, \xi)d\sigma(\xi) - x \,.
\end{align*}
Considering \eqref{generator} for
$\mathsf{f}( r, x)= r(\theta)$  the evaluation map at fixed $\theta\in \bS^{n-1}$ and using \eqref{integ-1}, we get for $( R^\ep_t(\theta))_{t\ge0}$
the decomposition 
\begin{align}\label{real}
 R^\ep_t(\theta)&= R^\ep_0(\theta)+\int_0^t b^\ep ( R_s^\ep,x_s^\ep)(\theta)ds+\Sigma^\ep_t(\theta) , \qquad \theta\in \bS^{n-1} \,,  
\end{align}
where $\Sigma_t^\ep(\theta)$ is an $\cF_t$-martingale.
Similarly, taking $\mathsf{f}( r, x)= x\cdot \vec{e}_i$, $i=1,\ldots,n$, 
in \eqref{generator} yields   
\begin{align}
x^\ep_t&= x^\ep_0+\int_0^t\ep^{-1}h( R_s^\ep, x_s^\ep)ds+ M^\ep_t \,,
\label{bump}
\end{align}
for some $\R^n$-valued, $\cF_t$-martingale $M_t^\ep$. For
$r \in C_+(\bS^{n-1})$ let $(x^{\ep,r}_t)_{t \ge 0}$ 
denote the $\R^n$-valued Markov jump process 
evolving by \eqref{bump} in the frozen domain 
$R^\ep_s\equiv r$. Its generator is thus
\begin{align}\label{frozen-dyn}
(\cL^{\ep,r} \mathsf{f}) (x) & =  \ep^{-1}\Big[\int_{\bS^{n-1}} \mathsf{f}(H(r,\xi))F(r, x, \xi)d\sigma(\xi) -  \mathsf{f}(x)\Big]
\end{align}
for a suitable collection of functions $\mathsf{f}:\R^n\to\R$.
Consider also the deterministic dynamics 
$t \mapsto  r_t  \in C_+(\bS^{n-1})$ given 
by
\begin{equation}\label{b-bar}
\begin{aligned}
 r_t (\theta)&= r_0 (\theta)+\int_0^t\ovl{b}( r_s )(\theta)ds\,, \\
\quad \ovl{b}(r)(\theta) &:=\int_{\R^n}b(r,x)(\theta)d\nu_r(x)\,,
\qquad \theta\in \bS^{n-1} \,.   
\end{aligned}
\end{equation}
The probability measures $\nu_r$ on $\R^n$ for $r \in C_+(\bS^{n-1})$ 
will be specified in Assumption (E), with Proposition \ref{ppn:implic}
establishing the existence and uniqueness of the solution for the 
infinite-dimensional \abbr{ODE} \eqref{b-bar}.
For every $a\in(0,1)$ and $p \ge 1$, we define the collections
\begin{align*}
%\label{def:A}
\cA_p (a) := \big\{ r \in C_+(\bS^{n-1}) \, : \; & \inf_\theta \{ r(\theta) \} \ge a, \, \|r\|_p\le a^{-1} \big\}
\,, \nonumber \\ 
\cAe_p(a) := \big\{(r,x)\in\cD(F): & \; r \in \cA_p(a) \,, \\
& x\in \text{Image}(H(r',\cdot)) \text{ for some } r'\in\cA_p(a), r'\le r \big\}
\end{align*} 
and assume the following Lipschitz properties of $F$, $H$ and $\ovl{b}$ 
throughout $\cAe_{\infty}(a)$.
\begin{assumption}[L]
For any $a\in(0,1)$, there exists $K=K(a)$ finite such that %for both $p=2$ and $p=\infty$,
uniformly for $(r,x), (r',x')\in \cAe_\infty(a)$, % we assume only for a smaller class, assumption is weaker
$z,z'\in \bS^{n-1}$, we have that
\begin{align}
\|F(r, x,\cdot)-F(r', x',\cdot)\|_p&\le K\big(\|r-r'\|_p+|x-x'|\big),  \quad p=2, \label{lip-F}\\
|H(r, z)-H(r', z')|&\le K\big(\|r-r'\|_2+|z-z'|\big), \label{lip-H}  \\
\|\ovl{b}(r)-\ovl{b}(r')\|_p&\le K\|r-r'\|_p\,, \quad \mbox{ both } p=2 \text{ and } p=\infty.\label{lip-bar}
\end{align}
Moreover, for any $a \in (0,1)$,  
\begin{equation}\label{F-ubd}
\ovl F(a):= \sup \{  F(r,x,\theta) : (r,x) \in \cAe_2(a), \theta \in \bS^{n-1} \} < \infty \,.
\end{equation}
\end{assumption}
%, and for each $r\in C_+(\bS^{n-1})$, the set $\cK_r := \{x\in\R^n \colon (r,x) \in \mathcal D(F), \, x \in {\rm Image}(H(r,\cdot)) \}$ is compact.
Our other two assumptions concern the ergodicity of the particle process in a frozen domain and
the convergence to $b(\cdot,\cdot)$ of the drift of $R^\ep_t$ when $\ep \to 0$.
\begin{assumption}[E] 
For any $r \in C_+(\bS^{n-1})$ 
the process $(x_t^{1,r})_{t\ge0}$ of generator \eqref{frozen-dyn} has a unique invariant probability measure $\nu_r$, such that  
\begin{align}\label{rate}
\sup_{(r,x) \in \cAe_\infty(a)} \sup_{t_0\ge 0}
\E_x\left[ \Big\|\frac{1}{t}\int_{t_0}^{t_0+t}[b(r,x^{1,r}_s)-\ovl{b}(r)]ds  \Big\|_2^2 \right]\le \lambda(t,a) \,,
\end{align}
where $\lambda(t,a)\to 0$ as $t\to\infty$, for any fixed $a \in (0,1)$.
\end{assumption}
\begin{assumption}[C]
For the $\cF_t$-stopping times 
\begin{align}\label{zeta}
\zeta^\ep(\delta)  :=\inf\big\{t>0:\, \|R^\ep_t \|_\infty> \delta^{-1} \big\}\,,
\end{align}
any fixed $t \ge 0$ and $\delta>0$,
 \begin{equation}
\lim_{\ep\to 0}  \|b^\ep(R^\ep_{t\wedge \zeta^\ep(\delta)}, x_{t\wedge \zeta^\ep(\delta)}^\ep) - 
b(R^\ep_{t\wedge \zeta^\ep(\delta)}, x_{t\wedge \zeta^\ep(\delta)}^\ep) \|_2 = 0, 
\;\; \text{in probability\,.}
 \end{equation}
\end{assumption}

\begin{rmk}\label{rem:assumpC} 
From Definition \ref{def:apx-iden} we know that 
$\|b^\ep(r, x) - b(r, x) \|_2 \to 0$ as $\ep \to 0$, 
for any fixed $(r,x) \in \cD(F)$. For Assumption (C) we
need this to hold at the $\ep$-dependent 
$(R^\ep_{t\wedge \zeta^\ep},x^\ep_{t \wedge \zeta^\ep})$, 
but see parts (c) and (d) of Proposition \ref{ppn:implic} for simple sufficient
conditions for Assumptions (E) and (C), respectively.
\end{rmk}

Equipped with these assumptions, we next state our main result. For technical reasons, we need to introduce a stopping time $\sigma^\ep(\delta)$ giving a lower bound to $F$, as needed to apply the crucial Lemma \ref{H_-1}.
\begin{thm}[Averaging principle]\label{thm1}
Under Assumptions {\rm (L)}, {\rm (E)} and {\rm (C)}, starting at
$R^\ep_0= r_0 \in C_+(\bS^{n-1})$, for the $\cF_t$-stopping time 
\begin{align}\label{pdf-lbd}
%\tau^\ep(\delta)  := \sigma^\ep(\delta) \wedge \zeta^\ep(\delta) \,, & \qquad
%\zeta^\ep(\delta)  :=\inf\big\{t>0:\, \|R^\ep_t \|_\infty>\delta^{-1} \big\}, \nonumber \\
& \sigma^\ep(\delta)  :=\inf\big\{t\ge0: \min_\theta \{F(R_t^\ep,x^\ep_t,\theta)\}<\delta\big\},
\end{align}
and any $T<\infty$, $\delta>0$, we have that for any $\iota>0$
\begin{align}\label{hydrodyn}
\lim_{\ep\to0} \P\Big[\sup_{0\le t\le T\wedge \sigma^\ep(\delta)}\|R_t^\ep- r_t \|_2>\iota\Big]=0\,
%\lim_{\ep\to0} \E\Big[\sup_{0\le t\le T\wedge \sigma^\ep(\delta)}\|R_t^\ep- r_t \|_2^2 \Big]=0\,
\end{align}
where $\{r_t\}_{t\ge 0}$ denotes the unique $C_+(\bS^{n-1})$-solution of the \abbr{ode} \eqref{b-bar} starting at $r_0$ (see Proposition \ref{ppn:implic}(b)).
\end{thm}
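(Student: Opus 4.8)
The plan is to implement the classical averaging strategy adapted to this infinite-dimensional jump setting: compare $R^\ep_t$ with the averaged ODE \eqref{b-bar} by controlling (i) the martingale part $\Sigma^\ep_t$, (ii) the replacement of $b^\ep$ by $b$ along the trajectory, (iii) the replacement of the time-integral of $b(R^\ep_s,x^\ep_s)$ by its averaged version $\ovl b(R^\ep_s)$, and finally (iv) a Gr\"onwall argument in $L^2(\bS^{n-1})$ using the Lipschitz bound \eqref{lip-bar}. All estimates are localized by the stopping time $\tau^\ep(\delta):=T\wedge\sigma^\ep(\delta)\wedge\zeta^\ep(\delta)$ (extending the supremum to $\zeta^\ep(\delta)$ at negligible cost once one checks that on $[0,\sigma^\ep(\delta)]$ the volume, hence $\|R^\ep_t\|_\infty$, cannot escape a fixed compact set, so that the pair $(R^\ep_t,x^\ep_t)$ stays in some $\cAe_\infty(a)$ with $a=a(\delta,T)$ and Assumption (L) applies). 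Throughout, Lemma \ref{H_-1} (whose applicability is precisely why $\sigma^\ep(\delta)$ was introduced) supplies the quantitative ergodic control needed in step (iii).

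First I would estimate the martingale $\Sigma^\ep_t(\theta)$ from \eqref{real}. Its predictable quadratic variation is, by \eqref{generator}, of order $\ep^{-1}\cdot(\ep y^{-1}_{r,x})^2\|g_{\eta(\ep,r,x)}\|_2^2$ integrated in $s$; since $g_\eta$ is a \emph{local} approximate identity, $\|g_\eta\|_2^2 = O(\eta^{-(n-1)})=O(\ep^{-(n-1)/n}\,y_{r,x})$ uniformly on $\cAe_\infty(a)$, giving $\langle\Sigma^\ep(\theta)\rangle_t = O(\ep^{1/n})$ uniformly in $\theta$. A Doob $L^2$-maximal inequality (after integrating over $\theta\in\bS^{n-1}$, using Fubini) then yields $\E\big[\sup_{t\le\tau^\ep(\delta)}\|\Sigma^\ep_t\|_2^2\big]\to0$. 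Step (ii) is immediate from Assumption (C): for each fixed $t$, $\|b^\ep(R^\ep_{t\wedge\zeta^\ep},x^\ep_{t\wedge\zeta^\ep})-b(R^\ep_{t\wedge\zeta^\ep},x^\ep_{t\wedge\zeta^\ep})\|_2\to0$ in probability, and combined with the uniform $L^2$-bound $\|b^\ep\|_2,\|b\|_2\le C(a)$ coming from \eqref{F-ubd} and $\|f\star g_\eta\|_2\le\|f\|_2$, dominated convergence upgrades this to $\int_0^{\tau^\ep(\delta)}\|b^\ep(R^\ep_s,x^\ep_s)-b(R^\ep_s,x^\ep_s)\|_2\,ds\to0$ in probability.

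The heart of the argument, and the step I expect to be the main obstacle, is (iii): showing $\int_0^{\tau^\ep(\delta)}\|b(R^\ep_s,x^\ep_s)-\ovl b(R^\ep_s)\|_2\,ds\to0$. Here $R^\ep_s$ is \emph{not} frozen, so Assumption (E) does not apply directly; the standard remedy is a \emph{time-discretization} into blocks of length $\Delta=\Delta(\ep)$ with $\ep\ll\Delta\ll1$. On each block $[t_k,t_{k+1})$ one (a) freezes the slow variable at $r=R^\ep_{t_k}$, coupling $(x^\ep_s)_{s\in[t_k,t_{k+1})}$ to the frozen process $(x^{\ep,r}_s)$ started from $x^\ep_{t_k}$, and bounds the coupling discrepancy using the Lipschitz estimates \eqref{lip-F}–\eqref{lip-H} together with the a priori slow increment $\sup_{s\in[t_k,t_{k+1}]}\|R^\ep_s-R^\ep_{t_k}\|_2 = O(\Delta)$ (from \eqref{real} and the uniform bound on $b^\ep$, plus the small martingale bound from step (i)); (b) applies the ergodic bound \eqref{rate}, rescaled from rate $1$ to rate $\ep^{-1}$, so that over a block of duration $\Delta$ the frozen process sees time $\Delta/\ep\to\infty$, giving $\E_x\big[\|\tfrac{\ep}{\Delta}\int_0^{\Delta/\ep}[b(r,x^{1,r}_u)-\ovl b(r)]\,du\|_2^2\big]\le\lambda(\Delta/\ep,a)\to0$; and (c) replaces $\ovl b(R^\ep_{t_k})$ by $\ovl b(R^\ep_s)$ on the block via \eqref{lip-bar} at cost $O(\Delta)$. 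Summing the $O(T/\Delta)$ blocks, the coupling error is $O(T\cdot\text{(error per block)})$ and the ergodic error is $O(T\,\sqrt{\lambda(\Delta/\ep,a)})$ (after Cauchy–Schwarz), so choosing e.g. $\Delta=\sqrt\ep$ and using $\lambda(\cdot,a)\to0$ drives everything to $0$. The delicate points are keeping the coupling of the non-Markovian $x^\ep$ to the frozen chain uniform over all $O(T/\Delta)$ blocks and ensuring the block-wise a priori regularity $\|R^\ep_s-R^\ep_{t_k}\|_2=O(\Delta)$ holds on the high-probability event $\{\tau^\ep(\delta)=T\wedge\sigma^\ep(\delta)\wedge\zeta^\ep(\delta)\}$, which is exactly where the stopping times $\sigma^\ep(\delta)$ and $\zeta^\ep(\delta)$ earn their keep.

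Finally, assembling \eqref{real} against \eqref{b-bar} gives, for $t\le\tau^\ep(\delta)$,
\begin{align*}
\|R^\ep_t-r_t\|_2 \le \int_0^t \|\ovl b(R^\ep_s)-\ovl b(r_s)\|_2\,ds + E^\ep_t,
\end{align*}
where $E^\ep_t$ collects the errors from steps (i)–(iii) and satisfies $\sup_{t\le\tau^\ep(\delta)}E^\ep_t\to0$ in probability. By \eqref{lip-bar} the first term is $\le K\int_0^t\|R^\ep_s-r_s\|_2\,ds$, so Gr\"onwall's inequality yields $\sup_{t\le T\wedge\sigma^\ep(\delta)\wedge\zeta^\ep(\delta)}\|R^\ep_t-r_t\|_2\le e^{KT}\sup E^\ep_t\to0$ in probability. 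A concluding remark removes the artificial $\zeta^\ep(\delta)$: since $r_t$ stays in a fixed compact subset of $C_+(\bS^{n-1})$ on $[0,T]$ (Proposition \ref{ppn:implic}(b)) and $R^\ep_t$ is $L^2$-close to it up to $\zeta^\ep(\delta)$, one can choose $\delta$ small enough (depending only on $r_0,T$) that $\zeta^\ep(\delta)>T\wedge\sigma^\ep(\delta)$ with probability $\to1$, which gives \eqref{hydrodyn} as stated.
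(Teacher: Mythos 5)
Your proof plan follows the same averaging-via-freezing strategy as the paper: discretize time into blocks of width $\Delta=\Delta(\ep)$ with $\ep\ll\Delta\ll1$, freeze the shape variable at the start of each block, couple the fast variable to the corresponding frozen Markov chain started at the same position (using Lemma~\ref{H_-1} to control the per-jump Wasserstein discrepancy between the two jump densities --- which is indeed exactly why the stopping time $\sigma^\ep(\delta)$ is needed), invoke the ergodic Assumption~(E) block-wise, and close with a Gr\"onwall in $L^2(\bS^{n-1})$ using \eqref{lip-bar}. The martingale bound and the use of Assumption~(C) are also as in the paper.

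However, there is a genuine gap in the control of the $x$-coupling error inside a block, and the suggested scaling $\Delta=\sqrt{\ep}$ is incompatible with the Gr\"onwall estimate that this step actually requires. The drift of $x^\ep_t-\wh{x}^\ep_t$ carries an explicit factor $\ep^{-1}$ (compare \eqref{bump} with \eqref{frozen-dyn}), so the Gr\"onwall inequality for $e(t)=\E[\mathbf{1}_{\{t\le\tau\}}|x^\ep_t-\wh{x}^\ep_t|^2]$ on a block $[k\Delta,(k+1)\Delta)$ takes the form
\[
e(t)\le C\ep^{-2}\Delta^4 + C\ep^{-2}\Delta\int_{k\Delta}^t e(s)\,ds ,
\]
whence $\sup_{t} e(t) \le C\,\ep^{-2}\Delta^4\, e^{C(\Delta/\ep)^2}$. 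Your informal ``error per block is $O(\Delta)$'' bookkeeping misses the $\ep^{-1}$ amplification over the $\Delta/\ep$ jumps in each block. With $\Delta=\sqrt{\ep}$ the exponent $C/\ep$ diverges and the bound is vacuous. One needs $\Delta/\ep$ to grow only \emph{barely}: the paper takes $\Delta=\ep\log^{1/3}(\ep^{-1})$, giving $\ep^{-2}\Delta^4 e^{C(\Delta/\ep)^2}=\ep^2\log^{4/3}(\ep^{-1})\,e^{C\log^{2/3}(\ep^{-1})}\to0$. Identifying this narrow window is the technical heart of the argument and your plan as written would not close. A second, smaller gap is the removal of $\zeta^\ep(\delta)$: $L^2$-proximity of $R^\ep_t$ to $r_t$ does not by itself control $\|R^\ep_t\|_\infty$, so the conclusion does not follow the way you sketch it; the paper handles this via a covering/Poisson-tail argument encapsulated in Proposition~\ref{ppn:implic}(e).
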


\begin{rmk}\label{st-remov}
With minor modifications of the proof, we can  
accommodate in Theorem \ref{thm1} any random initial data such that 
$R_0^\epsilon \to  r_0$ in probability. 
It is crucial to have $ r_0$ strictly positive, 
since the function $b(r,x)$ blows up when $y_{r,x} \to 0$, 
hence \eqref{lip-bar} fails near $r \equiv 0$. Of course, if for any $a \in (0,1)$,  
\begin{equation}\label{F-lbd}
\udl F(a):= \inf \{  F(r,x,\theta) : \; (r,x) \in \cAe_2(a), \; \theta \in \bS^{n-1} \} > 0 \,,
\end{equation}
then we can dispense with the stopping time $\sigma^\ep(\delta)$ in \eqref{hydrodyn}.
\end{rmk}

The next proposition, whose proof is deferred to the appendix,
clarifies the implications of our assumptions.

\begin{ppn}\label{ppn:implic}$~\,$\newline
(a) If condition \eqref{lip-F} of Assumption (L) holds, then
for every $a\in(0,1)$ there exists $C=C(a,K)=C(a)<\infty$ such that for all 
$(r,x),(r',x')\in\cAe_\infty(a)$, 
\begin{equation}\label{lip-b} 
\|b(r, x)-b(r',x')\|_p \le C\big(\|r-r'\|_p+|x-x'|\big)\,, \quad p=2\,.  
\end{equation}
Similarly, if \eqref{lip-F} holds with $p=\infty$, then the same applies for \eqref{lip-b}.

Further, conditions \eqref{lip-F} and \eqref{lip-H} imply that
for all $(r,x),(r',x')\in \cAe_\infty (a)$,
\begin{equation}\label{lip-h}
|h(r,x)-h(r',x')| \le C(\|r-r'\|_2+|x-x'|)\,,
\end{equation}
while if \eqref{lip-F} holds for both $p=2$ and $p=\infty$, with \eqref{lip-H}, \eqref{F-ubd} and
\eqref{F-lbd} holding as well, then \eqref{lip-bar} must also hold.\\
(b) Condition \eqref{lip-bar} at $p=\infty$, together with Condition \eqref{F-ubd} imply that starting at 
any $r_0 \in C_+(\bS^{n-1})$  
the \abbr{ODE} \eqref{b-bar} admits a unique $C_+(\bS^{n-1})$-solution 
on $[0,\infty)$. Further, up 
to time $T$ the solution of \eqref{b-bar} is in $\cA_\infty(a_1)$ for some $a_1(r_0,T)>0$. \\
(c) To verify Assumption (E), it suffices to show that for any 
$a\in(0,1)$ there exist $n_0(a)\in\N$ and $\delta=\delta(a)>0$, 
such that the jump transition probability measure $\mathsf{P}_r$ 
of the embedded Markov chain $\{x_{T_i}^{1,r}\}$
satisfies the uniform minorisation condition
\begin{align}\label{minor}
%\inf_{(r,x)\in\cAe_2(a)} 
(\mathsf{P}_r)^{n_0}(x,\cdot)\ge \delta m_r(\cdot)\,,
\end{align}
for any $(r,x)\in \cAe_\infty(a)$ and some probability measure $m_r(\cdot)$ on $\R^n$.\\
(d) Assumption (C) holds if for any $a \in (0,1)$, 
\begin{equation}\label{F-ubd-Lip}
% \ovl F_\star (a):= 
\sup \{  \|F(r,x,\cdot)\|_{\rm Lip} : (r,x) \in \cAe_\infty(a) \} < \infty \,.
\end{equation}
(e) Condition \eqref{F-ubd} implies that for some positive $\delta_n(r_0,T)$ and any
$0<\delta< \delta_n$
\[
\lim_{\ep\to0}\P(\zeta^\ep(\delta)<T\wedge\kappa^\ep)=0,
\]
with the $\cF_t$-stopping times $\zeta^\ep(\cdot)$ of \eqref{zeta} and  
\begin{align}\label{kappa-ep}
\kappa^\ep:=\inf\big\{t\ge0:\, \|R^\ep_t\|_2>1+\|r_T\|_2\big\}.
\end{align}
\end{ppn}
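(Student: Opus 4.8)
The plan is to reduce everything to two uniform facts on $\cAe_\infty(a)$: (i) $\omega_n a^{n-1}\le y_{r,x}\le \omega_n a^{-(n-1)}$, since $\int F(r,x,\cdot)\,d\sigma=1$ and $a\le r(\theta)\le a^{-1}$; and (ii) $\cAe_\infty(a)$ is bounded in $C_+(\bS^{n-1})\times\R^n$ — the radial part by definition, the $\R^n$-part because \eqref{lip-H} bounds $|H(r',z)|$ on $\cA_\infty(a)\times\bS^{n-1}$ relative to a fixed base point. From (ii) and \eqref{lip-F} at $p=2$ (resp.\ $p=\infty$) one gets $\sup_{\cAe_\infty(a)}\|F(r,x,\cdot)\|_2<\infty$ (resp.\ $\|F\|_\infty$). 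Writing $b=\omega_n F/y$ and splitting $b(r,x)-b(r',x')$ into a numerator difference (bounded via \eqref{lip-F} and the lower bound on $y$) and a denominator difference (bounded once $(r,x)\mapsto y_{r,x}$ is seen to be Lipschitz, which follows by the same splitting, the mean value theorem for $\rho\mapsto\rho^{n-1}$, and the bound on $\|F\|_2$) gives \eqref{lip-b}; \eqref{lip-h} is the analogous product-rule estimate for $h(r,x)=\int H(r,\xi)F(r,x,\xi)\,d\sigma(\xi)-x$, using in addition \eqref{lip-H} and the boundedness of $|H|$ from (ii). For \eqref{lip-bar}, \eqref{F-lbd} yields the one-step minorisation $\mathsf P_r(x,\cdot)\ge \udl F(a)\,\omega_n\,m_r(\cdot)$ with $m_r$ the normalised push-forward of $\sigma$ by $H(r,\cdot)$, hence $\nu_r$ is the unique invariant law and the chain is uniformly geometrically ergodic in total variation; a telescoping of $\nu_r-\nu_{r'}=\nu_r(\mathsf P_r^j-\mathsf P_{r'}^j)$, in which the one-step error is controlled by the Lipschitz dependence of $\mathsf P_r$ on $r$ (from \eqref{lip-F}, \eqref{lip-H}) and the Lipschitz constant of $\mathsf P_{r'}^i f$ is shown to decay geometrically in $i$ via $\mathrm{osc}(\mathsf P_{r'}^{i}f)\le 2\|\mathsf P_{r'}^{i}f-\nu_{r'}(f)\|_\infty$, then gives $\|\nu_r-\nu_{r'}\|_{\mathrm{TV}}\le C\|r-r'\|_p$ for $p\in\{2,\infty\}$; combined with \eqref{lip-b} and $\sup_{\cAe_\infty(a)}\|b(r,x)\|_p<\infty$ (from \eqref{F-ubd}) this yields $\|\ovl b(r)-\ovl b(r')\|_p\le\int\|b(r,x)-b(r',x)\|_p\,d\nu_r(x)+\sup_x\|b(r',x)\|_p\,\|\nu_r-\nu_{r'}\|_{\mathrm{TV}}\le K\|r-r'\|_p$.

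\textbf{Part (b).} I would argue Cauchy--Lipschitz in the Banach space $C(\bS^{n-1})$ plus a priori bounds. Since $\ovl b(r)(\theta)=\int \omega_n y_{r,x}^{-1}F(r,x,\theta)\,d\nu_r(x)\ge 0$ and, by Tonelli and the definition of $y_{r,x}$, $\tfrac{d}{dt}Leb(r_t)=\int r_t^{n-1}\ovl b(r_t)\,d\sigma\equiv 1$, any $C_+$-solution on $[0,T]$ obeys $r_t\ge r_0\ge\inf r_0>0$ and $\|r_t\|_n=(n(Leb(r_0)+t))^{1/n}$, hence (Hölder, $n\ge2$) $\|r_t\|_2$ is bounded; thus $r_t\in\cA_2(a_2)$ for some $a_2=a_2(r_0,T)$, so $F(r_t,x,\cdot)\le\ovl F(a_2)$ for $\nu_{r_t}$-a.e.\ $x$ by \eqref{F-ubd}, whence $0\le\ovl b(r_t)(\theta)\le\ovl F(a_2)/(\inf r_0)^{n-1}$ and $\|r_t\|_\infty\le\|r_0\|_\infty+\ovl F(a_2)\,T/(\inf r_0)^{n-1}$. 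Therefore every solution stays in a fixed $\cA_\infty(a_1)$, $a_1=a_1(r_0,T)>0$, which is the claimed inclusion; since \eqref{lip-bar} at $p=\infty$ makes $\ovl b$ Lipschitz and \eqref{F-ubd} makes it bounded on $\cA_\infty(a_1/2)$, the local solution from the contraction mapping continues past any finite time (the a priori bound keeping it in $\cA_\infty(a_1)$), giving global existence; uniqueness follows from Grönwall applied to $\|r_t-\tilde r_t\|_\infty\le K(a_1)\int_0^t\|r_s-\tilde r_s\|_\infty\,ds$.

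\textbf{Parts (c) and (d).} For (c), \eqref{minor} is the Doeblin condition for the embedded chain, so $\nu_r$ is its unique invariant law with $\|\mathsf P_r^k(x,\cdot)-\nu_r\|_{\mathrm{TV}}\le(1-\delta)^{\lfloor k/n_0\rfloor}$ uniformly on $\cAe_\infty(a)$; as $(x_t^{1,r})$ has unit jump rate at every state, $\nu_r$ is also its unique invariant law, and Poissonisation gives $\|\P^y_\tau(b(r,\cdot))-\ovl b(r)\|_2\le C_b\,\|\P_\tau(y,\cdot)-\nu_r\|_{\mathrm{TV}}\le C e^{-c\tau}$, where $C_b=\sup_{\cAe_\infty(a)}\|b\|_2<\infty$ (from \eqref{F-ubd}). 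With $g_s:=b(r,x_s^{1,r})-\ovl b(r)$, the Markov property gives $\E_x\langle g_s,g_u\rangle_2=\E_x\langle g_s,\P^{x_s^{1,r}}_{u-s}(b(r,\cdot))-\ovl b(r)\rangle_2$, hence $|\E_x\langle g_s,g_u\rangle_2|\le 2C_b^2 C e^{-c|u-s|}$; Fubini and a double time integral then yield $\E_x\|t^{-1}\int_{t_0}^{t_0+t}g_s\,ds\|_2^2\le\lambda(t,a):=4C_b^2C/(ct)\to0$, uniformly in $t_0\ge0$ and $(r,x)\in\cAe_\infty(a)$, which is \eqref{rate}. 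For (d): at $(R^\ep_{t\wedge\zeta^\ep(\delta)},x^\ep_{t\wedge\zeta^\ep(\delta)})$ one has $R^\ep\ge r_0$ (the bumps are nonnegative) and $\|R^\ep\|_\infty\le\delta^{-1}+B\ep^{1/n}$ (one bump has radial height $\le B\ep^{1/n}$, using the uniform bound on $\eta^{n-1}\|g_\eta\|_\infty$), so the state lies in some $\cAe_\infty(a')$, $a'=a'(r_0,\delta)$; then \eqref{F-ubd-Lip} makes $\theta\mapsto b(R^\ep,x^\ep)(\theta)=\omega_n F(\cdot)/y$ Lipschitz with a uniform constant, and since $\{g_\eta\}$ is \emph{local} ($g_\eta\ge0$, $1\star g_\eta=1$, supported on caps of radius $2\eta$) one has $\|f\star g_\eta-f\|_\infty\le 2\eta\,\mathrm{Lip}(f)$; as $\eta(\ep,R^\ep,x^\ep)=\ep^{1/n}y^{-1/(n-1)}\le C\ep^{1/n}\to0$, the quantity in Assumption (C) goes to $0$ deterministically (hence in probability).

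\textbf{Part (e) --- the main obstacle.} The point is to keep $\|R^\ep_t\|_\infty\le\delta^{-1}$ up to $T\wedge\kappa^\ep$, where the $L^2$/martingale estimates of the averaging proof are not sharp enough; I propose a Poissonisation-plus-covering (Chernoff) argument. On $\{t\le\kappa^\ep\}$ one has $r_0\le R^\ep_t$ and $\|R^\ep_t\|_2\le1+\|r_T\|_2$, so the state is in some $\cAe_2(a_0)$, $a_0=a_0(r_0,T)$; consequently every bump is supported on a cap of radius $\le 2c_1\ep^{1/n}$ about its centre $\xi$ and has height $\le B\ep^{1/n}$, and (since $F\le\ovl F(a_0)$) at each update the centre falls within $2c_1\ep^{1/n}$ of a fixed $\theta$ with conditional probability $\le C_5\ep^{(n-1)/n}$. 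As the number of updates up to $T$ is $\mathrm{Poisson}(T/\ep)$, a thinning argument bounds $R^\ep_{T\wedge\kappa^\ep}(\theta)-\|r_0\|_\infty$ stochastically by $B\ep^{1/n}\,\mathrm{Poisson}(\mu_\ep)$ with $\mu_\ep=C_5 T\ep^{-1/n}$, and the Poisson Chernoff bound gives $\P(R^\ep_{T\wedge\kappa^\ep}(\theta)>\delta^{-1})\le e^{-c\mu_\ep}$ once $\delta^{-1}>\|r_0\|_\infty+2BC_5T$, i.e.\ $\delta<\delta_n:=(\|r_0\|_\infty+2BC_5T)^{-1}$. Covering $\bS^{n-1}$ by $\lesssim\ep^{-(n-1)/n}$ caps of radius $\ep^{1/n}$ (and enlarging the cap radius a little to pass from net points to all $\theta$, the extra contribution being of the same form), a union bound gives $\P(\|R^\ep_{T\wedge\kappa^\ep}\|_\infty>\delta^{-1})\le C_n\ep^{-(n-1)/n}e^{-c\mu_\ep}\to0$. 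Since $\|R^\ep_t\|_\infty$ is nondecreasing in $t$, $\{\zeta^\ep(\delta)<T\wedge\kappa^\ep\}\subseteq\{\|R^\ep_{T\wedge\kappa^\ep}\|_\infty\ge\delta^{-1}\}$, which is the assertion. The crux is precisely this uniform-in-$\theta$ (sup-norm) control: a bare variance bound on the localised growth at one angle does not suffice, and one must exploit the localisation of the bumps together with the near-independence of update locations (the Poisson structure) to afford a union bound over $\asymp\ep^{-(n-1)/n}$ caps, which forces $\delta$ below the explicit threshold $\delta_n$.
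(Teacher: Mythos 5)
Most of your proposal runs along the paper's own lines. Parts (b)--(e) are essentially the paper's arguments: for (b), a-priori control via $\frac{d}{dt}Leb(r_t)=1$, H\"older for the $L^2$-bound, \eqref{F-ubd} for the sup-norm bound, then Picard--Lindel\"of with continuation in $C(\bS^{n-1})$; for (c), Doeblin $\Rightarrow$ uniform geometric ergodicity and a decay-of-correlations bound giving $\lambda(t,a)=O(1/t)$ (the paper phrases this via a coupling to the stationary chain, you via a direct covariance/Poissonisation estimate -- equivalent); for (d), confinement of the stopped process in some $\cAe_\infty(a')$, the lower bound on $y_{r,x}$, and the approximate-identity error controlled by the uniform Lipschitz bound \eqref{F-ubd-Lip} (you use locality of $g_\eta$ directly, the paper uses the translation-operator estimate -- same content); and for (e), exactly the paper's cap-covering, per-update hit probability $\lesssim \ep^{(n-1)/n}$ from \eqref{F-ubd} on $\cA_2$, Poisson domination with mean $\asymp\ep^{-1/n}$, Chernoff tail and a union bound over $\asymp\ep^{-(n-1)/n}$ caps, with the same explicit threshold $\delta_n$. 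The first parts of (a) (Lipschitzness of $y_{r,x}$ and hence of $b$ and $h$) also match.

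There is, however, a genuine gap in your derivation of \eqref{lip-bar}: the claimed bound $\|\nu_r-\nu_{r'}\|_{\rm TV}\le C\|r-r'\|_p$ for the invariant laws of the $\R^n$-valued embedded chains is false in general. Since $H$ is deterministic, $\nu_r$ is the push-forward under $\xi\mapsto H(r,\xi)$ of a measure on $\bS^{n-1}$, hence typically concentrated on an $r$-dependent hypersurface (e.g.\ in the paper's main application it is a harmonic measure on the surface $\gamma\wt r$); for $r\neq r'$ these supports are generically disjoint, so the total variation distance is $2$ however small $\|r-r'\|_p$ is. This is visible in your own telescoping: the one-step error $(\mathsf{P}_r-\mathsf{P}_{r'})g$ is only controlled for \emph{Lipschitz} $g$, and the $i=0$ term with a merely bounded measurable (TV) test function involves $f(H(r,\xi))-f(H(r',\xi))$, which cannot be bounded by $\|r-r'\|$. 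The paper sidesteps this by working with the angle chain on $\bS^{n-1}$, whose kernel $\mathsf{K}_r(\xi,\theta)=F(r,H(r,\xi),\theta)$ has a density with respect to $\sigma$; uniform ergodicity from \eqref{F-lbd} plus a perturbation bound for uniformly ergodic chains gives $\|\mu_r-\mu_{r'}\|_{\rm TV}\le C\|r-r'\|_p$, and then $\ovl b(r)=\int b(r,H(r,\xi))\,d\mu_r(\xi)$ is estimated as you intended. Your scheme can also be repaired without leaving $\R^n$ by replacing TV with a bounded-Lipschitz/Wasserstein-type estimate, using that $x\mapsto b(r',x)$ is bounded and Lipschitz (by \eqref{lip-b} and \eqref{F-ubd}), so that only Lipschitz test functions ever enter the telescoping; but as written, the TV step would fail.
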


Recall \eqref{eq:Leb-add} that
the random dynamics \eqref{slow-component} has expected volume increase of 
$\ep(1+o(1))$ at each Poisson jump, (irrespective of the precise choice of  
% the rate of approximation of identity
$\eta(\ep,r,x) \to 0$ as $\ep \to 0$).
We thus expect the following result (whose proof is also deferred 
to the appendix), about the linear growth of 
the volume of the deterministic dynamics \eqref{b-bar}.
\begin{ppn}\label{vol-growth}
If the solution $( r_t )_{t\ge0}$ to the \abbr{ODE} \eqref{b-bar} belongs to $C_+(\bS^{n-1})$ for all $t\ge0$, then $Leb( r_t )= Leb( r_0 ) + t$. Further, \eqref{eq:Leb-add} holds throughout $\cD(F)$.
\end{ppn}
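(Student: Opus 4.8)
The plan is to prove the two assertions of Proposition \ref{vol-growth} in the natural order: first the exact linear volume growth of the deterministic flow, which is a short computation once we know the solution stays in $C_+(\bS^{n-1})$; then the limiting identity \eqref{eq:Leb-add}, which underlies the whole construction and is essentially the ``sanity check'' that our choice of bump amplitude $\ep y_{r,x}^{-1}$ was correct.

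For the first part, recall $Leb(r) = n^{-1}\|r\|_n^n = n^{-1}\int_{\bS^{n-1}} r(\theta)^n\,d\sigma(\theta)$. Since $( r_t )_{t\ge0}$ solves \eqref{b-bar} and, by hypothesis, lies in $C_+(\bS^{n-1})$ for all $t$, the map $t\mapsto r_t(\theta)$ is $C^1$ with $\partial_t r_t(\theta) = \ovl b( r_t )(\theta)$, uniformly in $\theta$ on compact time intervals (here one invokes the Lipschitz bound \eqref{lip-bar} and the uniform bounds from Proposition \ref{ppn:implic}(b) to justify differentiating under the integral sign and interchanging $\partial_t$ with $\int_{\bS^{n-1}}$). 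Hence
\[
\frac{d}{dt}Leb( r_t ) = \int_{\bS^{n-1}} r_t(\theta)^{n-1}\,\ovl b( r_t )(\theta)\,d\sigma(\theta).
\]
Now substitute the definition $\ovl b(r)(\theta) = \int_{\R^n} b(r,x)(\theta)\,d\nu_r(x)$ with $b(r,x)(\theta) = \omega_n y_{r,x}^{-1} F(r,x,\theta)$, apply Fubini (legitimate since $F$ is a bounded probability density and $r_t$ is bounded above and below), and integrate in $\theta$ first: by the very definition \eqref{def:y} of $y_{r,x}$, $\int_{\bS^{n-1}} r_t(\theta)^{n-1}\,\omega_n y_{ r_t ,x}^{-1} F( r_t ,x,\theta)\,d\sigma(\theta) = 1$ for every $x$. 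Integrating the constant $1$ against the probability measure $\nu_{ r_t }$ gives $1$, so $\frac{d}{dt}Leb( r_t ) = 1$, whence $Leb( r_t ) = Leb( r_0 ) + t$.

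For the second part, fix $(r,x)\in\cD(F)$ and $\xi\in\bS^{n-1}$, and compute directly: writing $g := g_\eta(\langle\xi,\cdot\rangle)$, we have
\[
Leb(r+\ep g) - Leb(r) = n^{-1}\int_{\bS^{n-1}}\big[(r(\theta)+\ep g(\theta))^n - r(\theta)^n\big]d\sigma(\theta)
= \ep\int_{\bS^{n-1}} r(\theta)^{n-1} g(\theta)\,d\sigma(\theta) + O(\ep^2 \|g\|_\infty^2)\,,
\]
by the binomial expansion, where the error is controlled using $\|g\|_1 \le \omega_n$ (from $1\star g_\eta = 1$) and $\eta^{n-1}\|g_\eta\|_\infty$ bounded, so that $\ep^2\|g\|_\infty^2 \lesssim \ep^2\eta^{-2(n-1)} = \ep^2 (\ep^{1/n} y_{r,x}^{-1/(n-1)})^{-2(n-1)} = \ep^{2-2(n-1)/n} y_{r,x}^2 \to 0$ as $\ep\to0$ for $n\ge2$ (in fact the leading term is itself $O(\ep)$, so we need the error to be $o(\ep)$, which holds since $2 - 2(n-1)/n > 1 \iff n > 2$; for $n=2$ one uses instead that $g$ has shrinking support of measure $O(\eta^{n-1})$, making $\int r^{n-1}g^2\,d\sigma = O(\eta^{n-1}\|g\|_\infty \cdot \|g\|_1) = O(1)$, hence the error is $O(\ep^2)$ regardless). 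Taking expectation over $\xi\sim F(r,x,\cdot)\,d\sigma$ and dividing by $\ep$:
\[
\ep^{-1}\E\big[Leb(r+\ep g_\eta(\langle\xi,\cdot\rangle)) - Leb(r)\big] = \int_{\bS^{n-1}}\!\!\int_{\bS^{n-1}} r(\theta)^{n-1} g_\eta(\langle\xi,\theta\rangle) F(r,x,\xi)\,d\sigma(\theta)\,d\sigma(\xi) + o(1)\,.
\]
The double integral equals $\omega_n\int_{\bS^{n-1}} r(\theta)^{n-1}\big(F(r,x,\cdot)\star g_\eta\big)(\theta)\,d\sigma(\theta)$ by \eqref{integ-1} and the symmetry $g_\eta(\langle\xi,\theta\rangle)=g_\eta(\langle\theta,\xi\rangle)$. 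As $\eta\to0$, the approximate-identity property $\|f\star g_\eta - f\|_2\to0$ with $f = F(r,x,\cdot)$, together with $r^{n-1}\in L^2(\bS^{n-1})$, gives convergence of this to $\omega_n\int_{\bS^{n-1}} r(\theta)^{n-1}F(r,x,\theta)\,d\sigma(\theta) = y_{r,x}$ by Cauchy--Schwarz. This proves \eqref{eq:Leb-add}, completing the proposition.

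The main obstacle is the bookkeeping in the second part: one must keep the error term genuinely $o(\ep)$ (not just $o(1)$) after dividing by $\ep$, and the naive binomial bound $O(\ep^2\|g_\eta\|_\infty^2)$ is borderline in low dimensions, so the localization of $g_\eta$ (support on a cap of radius $2\eta$, measure $\asymp\eta^{n-1}$) must be used to replace one factor of $\|g_\eta\|_\infty$ by $\|g_\eta\|_1\le\omega_n$; likewise the interchange of limits $\ep\to0$ and $\eta\to0$ is harmless only because $\eta = \eta(\ep,r,x)\to0$ is tied to $\ep$, and one should phrase the statement as a joint limit. The first part is routine modulo justifying differentiation under the integral, which is immediate from Proposition \ref{ppn:implic}(b).
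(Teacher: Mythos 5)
Your proof follows essentially the same route as the paper's for both parts. Part 1 — differentiate $t \mapsto n^{-1}\|r_t\|_n^n$, substitute $\ovl b$, exchange the order of integration, and use the defining identity $\int_{\bS^{n-1}} r^{n-1}\,b(r,x)\,d\sigma = 1$ from \eqref{def:y} — is word for word the paper's argument. Part 2 also has the same structure: binomial expansion to isolate the linear term, recognize the resulting double integral as $\omega_n\int r^{n-1}(F(r,x,\cdot)\star g_\eta)\,d\sigma$, and pass to the limit via the $L^2$-contraction/approximation property of the spherical approximate identity together with Cauchy--Schwarz.

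However, your error-term bookkeeping in Part 2 contains an arithmetic mistake that, taken at face value, would leave a gap. You compute $\ep^2\|g_\eta\|_\infty^2 \lesssim \ep^{2 - 2(n-1)/n}\,y_{r,x}^2$ and claim this is $o(\ep)$ "since $2 - 2(n-1)/n > 1 \iff n > 2$". In fact $2 - 2(n-1)/n = 2/n$, so the inequality $2/n > 1$ holds iff $n < 2$ — the naive bound $\ep^2\|g_\eta\|_\infty^2$ is \emph{never} $o(\ep)$ for $n \ge 2$. The localization of $g_\eta$ (support on a cap of measure $\asymp \eta^{n-1}$) must therefore be used for \emph{all} $n \ge 2$, not only $n = 2$. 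With it, the correct estimate is
\begin{equation*}
\int_{\bS^{n-1}} g_\eta^2 \,d\sigma \;\le\; \sigma(\mathrm{supp}\,g_\eta)\,\|g_\eta\|_\infty^2 \;\lesssim\; \eta^{n-1}\|g_\eta\|_\infty^2 \;=\; O\big(\eta^{-(n-1)}\big)\,,
\end{equation*}
not $O(1)$ as you write, and hence the $k=2$ error term is
\begin{equation*}
\ep^2\int r^{n-2}g_\eta^2\,d\sigma = O\big(\ep^2\eta^{-(n-1)}\big) = O\big(\ep^{1+1/n}\big) = o(\ep)
\end{equation*}
for every $n \ge 2$ (and similarly the higher-order terms are $O(\ep^{1 + (k-1)/n})$). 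So your final conclusion is right, but the justification as stated is off by a sign, and the claimed $O(\ep^2)$ rate is too optimistic. For what it's worth, the paper itself just asserts "$+o(\ep)$" without detail, so your instinct to make the error-term argument explicit is good — it just needs the arithmetic fixed. Your closing remark that \eqref{eq:Leb-add} only holds under the implicit coupling $\eta = \eta(\ep,r,x)$ rather than an unrestricted joint limit is a valid observation: the error is $o(\ep)$ precisely because $\ep\,\eta^{-(n-1)}\to 0$, which fails if $\eta$ shrinks too fast relative to $\ep$.
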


Under the following scaling invariance of $F$ and $H$, we will 
deduce a shape theorem for the process $(R_t^1)_{t \ge 0}$, from the averaging principle of Theorem \ref{thm1}.

\begin{assumption}[I]
For any scalar $c>0$, if $(r,x)\in\cD(F)$ then $(cr,cx)\in\cD(F)$ and
\begin{align}\label{scaling-inv}
F(r,x,\cdot)&=F(cr,cx,\cdot),\\
cH(r,\cdot)&=H(cr,\cdot). \label{scaling-H}
\end{align}
\end{assumption}

\begin{defn}\label{def:inv-sol}
(a) A function $\psi\in C_+(\bS^{n-1})$ is called invariant (shape) for the \abbr{ODE} \eqref{b-bar}, if starting at $ r_0 =\psi$ yields 
\begin{align*}
%\label{cond:inv}
r_t =(1+t/Leb(\psi))^{1/n} \psi, \quad t\ge 0.
\end{align*}
(b) A function $\psi\in C_+(\bS^{n-1})$ is called attractive (shape) for the \abbr{ODE} \eqref{b-bar} and a collection $\cC$ of initial data, if starting at any $ r_0 \in \cC$, the solution $t \mapsto  r_t  \in C_+(\bS^{n-1})$ exists, with
\begin{align}\label{cond:attr}
\lim_{t\to\infty}\Big \|
(Leb( r_0 )+t)^{-1/n}  r_t - Leb(\psi)^{-1/n} \psi \Big \|_2=0.
\end{align}
\end{defn}

In general, invariant shapes may not be unique, nor are they necessarily attractive. See Example \ref{non-uniq}.

\begin{thm}[Shape theorem]\label{thm2}
Suppose Assumption (I) holds and \eqref{hydrodyn} 
applies without the stopping time $\sigma^\ep(\delta)$ 
(see Remark \ref{st-remov}).\\
(a) If a function $\psi$ with $Leb(\psi)=1$ is invariant for the \abbr{ODE} \eqref{b-bar}, then for any $c>0$, $1\le T<\infty$ and $\iota>0$, 
% we have that
\begin{align}\label{shape-thm-1-a}
\lim_{N\to\infty}\P\Big(\sup_{1\le s\le T}\left \|
(N(c+s))^{-1/n} R^1_{sN}-\psi\right \|_2>\iota\, \Big|\, R^1_0=(cN)^{1/n}\psi\Big)=0\,.
\end{align}
(b) If a function $\psi$ with $Leb(\psi)=1$ is attractive for the \abbr{ODE} \eqref{b-bar} and a collection $\cC$ of initial data, then for any $\iota>0$ and $r_0\in\cC$,
% we have that
\begin{align*}
%\label{shape-thm-1-b}
\lim_{t\to\infty}\lim_{N\to\infty}\P\Big(\big \|
\big(N(Leb(r_0)+t)\big)^{-1/n} R^1_{tN}-\psi\big\|_2>\iota\, \big|\, R^1_0=N^{1/n}r_0\Big)=0\,.
\end{align*}
\end{thm}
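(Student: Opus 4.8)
The plan is to deduce Theorem \ref{thm2} from Theorem \ref{thm1} by exploiting the exact linear volume growth (Proposition \ref{vol-growth}), the scaling invariance (Assumption (I)), and a diffusive rescaling of space-time that converts the process $R^1$ at macroscopic scale into an instance of $R^\ep$ with $\ep = 1/N$. The key algebraic observation is that Assumption (I) together with \eqref{integ-1} forces $b(cr,cx) = c^{1-n} b(r,x)$ and hence $\ovl b(cr) = c^{1-n}\ovl b(r)$, so if $\{r_t\}$ solves \eqref{b-bar} then $\{c r_{t/c^n}\}$ does too (up to the time-change by the volume). This is exactly the content of Definition \ref{def:inv-sol}(a): an invariant $\psi$ with $Leb(\psi)=1$ generates the explicit solution $r_t = (1+t)^{1/n}\psi$, and more generally $r_0 = c^{1/n}\psi$ gives $r_t = (c+t)^{1/n}\psi$.

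For part (a), I would first establish a scaling lemma: for $N = 1/\ep$, the law of $(R^1_{sN})_{s \ge 0}$ started from $R^1_0 = (cN)^{1/n}\psi$ equals the law of $(N^{1/n} \widetilde R^{\,1/N}_{s})_{s\ge 0}$, where $\widetilde R^{\,\ep}$ is the process \eqref{slow-component}--\eqref{fast-component} with $\ep = 1/N$ started from $\widetilde R^{\,\ep}_0 = c^{1/n}\psi$. This is a direct check at the level of generators \eqref{generator}: under $(r,x) \mapsto (N^{1/n} r, N^{1/n} x)$ the jump rate $\ep^{-1}$ becomes $N$, while the volume normalization $y_{r,x}$ scales as $N^{(n-1)/n} y$, the spherical parameter $\eta(\ep,r,x)$ is scale-invariant (being dimensionless), and the bump $\ep y^{-1}_{r,x} g_\eta$ scales correctly by $N^{1/n}$; combined with \eqref{scaling-inv}--\eqref{scaling-H} this matches the two generators after the time change $t = sN$. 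Then apply Theorem \ref{thm1} to $\widetilde R^{\,1/N}$ with the uniform-in-$\ep$ deterministic limit $r_s = (c+s)^{1/n}\psi$ on $s \in [0,T]$; here one uses Remark \ref{st-remov} to drop $\sigma^\ep(\delta)$, as assumed in the hypothesis. Rescaling back by $N^{-1/n}$ and noting $(N(c+s))^{-1/n} R^1_{sN} = (c+s)^{-1/n} \widetilde R^{\,1/N}_s \to \psi$ in $\|\cdot\|_2$ uniformly on $[1,T]$ yields \eqref{shape-thm-1-a}. Note the lower cutoff $s \ge 1$ is harmless: we simply run Theorem \ref{thm1} on a slightly larger interval, or start the deterministic comparison from $s=0$ where $r_0 = c^{1/n}\psi$ is already strictly positive.

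For part (b), I would combine part (a)'s rescaling argument with the definition of attractiveness. Fix $r_0 \in \cC$ and $t$ large. Run the scaling lemma with $\ep = 1/N$ but now started from $\widetilde R^{\,1/N}_0 = r_0$ (so $R^1_0 = N^{1/n} r_0$); Theorem \ref{thm1} gives $\sup_{0 \le s \le t} \|\widetilde R^{\,1/N}_s - r_s\|_2 \to 0$ in probability as $N \to \infty$, where $\{r_s\}$ solves \eqref{b-bar} from $r_0$. By Proposition \ref{vol-growth}, $Leb(r_s) = Leb(r_0) + s$, so $(N(Leb(r_0)+t))^{-1/n} R^1_{tN} = (Leb(r_0)+t)^{-1/n} \widetilde R^{\,1/N}_t$, which is within $o_\P(1)$ of $(Leb(r_0)+t)^{-1/n} r_t$. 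The definition \eqref{cond:attr} of attractive shape says precisely that this deterministic quantity converges to $Leb(\psi)^{-1/n}\psi = \psi$ (recall $Leb(\psi)=1$) as $t \to \infty$. A triangle inequality — first let $N \to \infty$, then $t \to \infty$ — closes the argument in the stated iterated-limit form.

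The main obstacle is getting the scaling lemma airtight: one must verify that the dimensionless spherical-scale parameter $\eta(\ep,r,x) = \ep^{1/n} y_{r,x}^{-1/(n-1)}$ is genuinely invariant under $(r,x,\ep) \mapsto (cr, cx, c^n \ep)$ — which follows since $y_{cr,cx} = c^{n-1} y_{r,x}$ by \eqref{def:y} and \eqref{scaling-inv} — and that this same substitution carries the local approximate identity of radius $2\eta$ to itself, so that \eqref{slow-component} is exactly reproduced. Care is also needed because $F$ and $H$ are only defined on $\cD(F)$ and the Lipschitz/ergodicity assumptions only on the sets $\cAe_\infty(a)$; one should check that the rescaled process, started from $c^{1/n}\psi$ (or $r_0$), stays in such a good set uniformly in $N$ on the finite horizon $[0,T]$, which is guaranteed by Proposition \ref{ppn:implic}(b) applied to the deterministic limit together with the uniform closeness from Theorem \ref{thm1}. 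Everything else is bookkeeping with the explicit self-similar solutions.
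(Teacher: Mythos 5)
Your proposal is correct and follows essentially the same route as the paper: the paper's Lemma \ref{lem:coup} is exactly your scaling lemma (it constructs an explicit pathwise coupling by rescaling the Poisson clock and matching jump selections, rather than checking equality of the generators \eqref{generator}, but for Markov jump processes these yield the same conclusion), after which Theorem \ref{thm1}, Proposition \ref{vol-growth}, and the definitions of invariant/attractive shape are combined by the triangle inequality exactly as you describe. Your algebraic verifications ($y_{cr,cx}=c^{n-1}y_{r,x}$, invariance of $\eta$, $\ovl b(cr)=c^{1-n}\ovl b(r)$) match the identities established in the proofs of Lemma \ref{lem:coup} and Proposition \ref{inverse}.
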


Our main application is a model of random growth on $\R^n$ 
motivated by the expected mesoscopic behavior of \abbr{ORRW} 
and \abbr{OERW} on $\Z^n$, where to gain regularity we consider 
$F$ and $H$ defined via a smoothed version of the evolving domain. 
Specifically, fix $\eta>0$ 
and $g=g_\eta$ as in \eqref{def:g} for some $\phi_\eta \in C^3([-1,1])$. 
Then, $\wt r:=r\star g\in C^3(\bS^{n-1})$ 
for every $r\in L^2(\bS^{n-1})$ (see \eqref{integ-1}). We set
\begin{align}\label{def-conv-F}
F(r,x,\theta):=\frac{\partial}{\partial\mathbf{n}}G_{\wt{r}}(x,y)\Big|_{y=\wt{r}(\theta)\theta} \qquad \theta\in \bS^{n-1} \,,
\end{align}
where $G_{\wt{r}}(x,y)$ denotes
the Green's function of the Laplacian $-\Delta$ 
on star-shaped domain $D\subseteq\R^n$ 
with Dirichlet boundary conditions at $\wt{r}=\partial D$
and $\frac{\partial}{\partial\mathbf{n}}$ is the inward normal  % I checked Dahlberg's paper and Jerison's paper, should be "inward normal"
derivative on $\partial D$. Similarly, fix a 
locally Lipschitz function $\alpha \colon \R_{>0}\times \bS^{n-1} \to \R_{\ge 0}$ such that
\begin{equation}\label{alpha:bdd}
0 \le \alpha(\ell,z) < \ell \;\;\;\; \text{ on } \;\;\;\; \R_{>0}\times \bS^{n-1}\,,
\end{equation}
and set (see Section \ref{sec:app} for the probabilistic interpretation),
\begin{align}\label{def-conv-H}
H(r,z): = \alpha(\wt r(z),z)z \,.
\end{align}

Another natural hitting rule $F$ chooses a boundary point with probability 
``proportional to a function of the distance to the particle''. Specifically, fixing a locally Lipschitz
$\varphi:(0,\infty)\to (0,\infty)$ and with $\wt{r}$ as above, replace \eqref{def-conv-F} by
\begin{align}\label{rule:distance}
F(r,x,\theta)=\frac{\varphi(|\wt r(\theta)\theta-x|)}{\int_{\bS^{n-1}}\varphi(|\wt r(z)z-x|)d\sigma(z)}, \quad \theta\in \bS^{n-1}\,,
\end{align}
while keeping the 
%transportation 
rule $H$ of \eqref{def-conv-H}.
We have the following results for these rules.
\begin{thm}\label{harm:measure}
$~$ \newline
(a) The Averaging Principle of Theorem \ref{thm1} holds under either
\eqref{def-conv-F}-\eqref{def-conv-H} or \eqref{alpha:bdd}--\eqref{rule:distance},
 without the stopping times $\sigma^\ep(\delta)$
of \eqref{pdf-lbd}. \\
(b) In case $\alpha(\ell, z)=\alpha(z)\ell$ and $\vphi(t)=t^{\beta}$,
the Shape Theorem \ref{thm2} also 
holds. In particular, for $\alpha(\ell, z)=\gamma\ell$ with $\gamma\in[0,1)$ 
fixed, the centered Euclidean ball is an invariant shape.\\
(c) For $\gamma = 0$, $\beta<0$ and setting $\wt{r}=r$ in 
\eqref{def-conv-F} and \eqref{rule:distance}, the centered  Euclidean ball is 
uniquely attractive among $C^1_+(\bS^{n-1})$ initial data.
\end{thm}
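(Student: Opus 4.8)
The plan is: for part (a), to verify Assumptions (L), (E), (C) and the lower bound \eqref{F-lbd} for each of the two rule families (the latter letting us drop $\sigma^\ep(\delta)$, by Remark \ref{st-remov}); and for parts (b)--(c), to verify Assumption (I) and then analyse the limiting \abbr{ODE} \eqref{b-bar}. The reductions in part (a) are common to both rules: since $g_\eta\ge0$ and $1\star g_\eta=1$, the smoothing $\wt r=r\star g_\eta$ satisfies $\inf\wt r\ge\inf r$, is bounded in $C^3(\bS^{n-1})$ on each $\{\|r\|_2\le a^{-1}\}$, and is Lipschitz in $r$ from $L^2(\bS^{n-1})$ into $C^3(\bS^{n-1})$, while on $\cAe_\infty(a)$ the source $x$ stays in a fixed compact subset of the open domain bounded by $\wt r$, at positive distance from its boundary. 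For the rule \eqref{rule:distance}, $F(r,x,\theta)$ is $\varphi$ composed with the $C^3$, bounded-and-bounded-below map $\theta\mapsto|\wt r(\theta)\theta-x|$, normalised by its $\sigma$-average, so \eqref{lip-F}, \eqref{F-ubd}, \eqref{F-lbd} and the criterion \eqref{F-ubd-Lip} for (C) all follow from the quotient rule and local Lipschitzness of $\varphi$; \eqref{lip-H} follows from \eqref{def-conv-H} and local Lipschitzness of $\alpha$. For the rule \eqref{def-conv-F} the same scheme applies once one has the analytic input: uniformly over star-shaped $C^3$ domains with controlled inradius, diameter and $C^3$ norm, and over $x$ in a fixed interior compact set, the harmonic-measure density $\tfrac{\partial}{\partial\mathbf n}G_{\wt r}(x,\cdot)$ is bounded above and below by positive constants and depends on $(\wt r,x)$ Lipschitz-continuously in $L^2(\bS^{n-1})$ and in Lipschitz-in-$\theta$ norm. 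This is standard interior/boundary Schauder theory together with the Hopf boundary-point lemma, the dependence on the domain being handled by flattening $\partial D$ via the radial diffeomorphism, which recasts $G_{\wt r}$ as the solution of a divergence-form elliptic problem with coefficients Lipschitz in $\wt r$. Finally Assumption (E) follows from Proposition \ref{ppn:implic}(c): after one step the embedded chain $\{x^{1,r}_{T_i}\}$ lies on the compact set $H(r,\bS^{n-1})$, and its one-step kernel dominates $\underline F(a)$ times the pushforward of $\sigma$ by $H(r,\cdot)$, which is the minorisation \eqref{minor} with $n_0=1$.

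For part (b), Assumption (I) is immediate: linearity of convolution gives $\widetilde{cr}=c\wt r$, hence $H(cr,z)=\alpha(c\wt r(z),z)z=c\,\alpha(z)\wt r(z)z=cH(r,z)$, while $F(cr,cx,\cdot)=F(r,x,\cdot)$ since $c\wt r(\theta)\theta-cx=c(\wt r(\theta)\theta-x)$ and $\varphi(t)=t^\beta$ is homogeneous (for \eqref{rule:distance}), respectively since harmonic measure is scale invariant (for \eqref{def-conv-F}); so, given part (a), Theorem \ref{thm2} applies. To see that for $\alpha(\ell,z)\equiv\gamma\ell$ the centred ball $\psi\equiv\rho$ is invariant, observe that if $r\equiv\rho$ then $\wt r\equiv\rho$ and $H(r,\xi)=\gamma\rho\,\xi$, so after its first jump the frozen particle chain lives on the sphere of radius $\gamma\rho$, where it performs a rotationally invariant random walk (the step law $F(\rho\mathbf1,\gamma\rho\xi',\cdot)$ depends only on $\langle\xi',\cdot\rangle$, by symmetry of the ball and of $\varphi(|\cdot|)$, resp.\ of harmonic measure); hence $\nu_{\rho\mathbf1}$ is uniform on that sphere, and as $y_{\rho\mathbf1,\gamma\rho\xi'}=\omega_n\rho^{n-1}$ is independent of $\xi'$, averaging $b(\rho\mathbf1,\cdot)$ against $\nu_{\rho\mathbf1}$ yields a constant function $\ovl b(\rho\mathbf1)$. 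Thus the \abbr{ODE} \eqref{b-bar} started at a constant stays constant, and by Proposition \ref{vol-growth} its volume grows linearly, forcing $r_t=(1+t/Leb(\psi))^{1/n}\psi$ as in Definition \ref{def:inv-sol}(a).

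For part (c), $\gamma=0$ gives $H\equiv0$, $\nu_r=\delta_0$, so the limiting \abbr{ODE} is $\dot r_t(\theta)=b(r_t,0)(\theta)=\tfrac{\omega_n}{y_{r_t,0}}F(r_t,0,\theta)$, and $\int_{\bS^{n-1}}r_t^{\,n-1}\dot r_t\,d\sigma=1$ by Proposition \ref{vol-growth}. For the distance rule \eqref{rule:distance} with $\wt r=r$, $\varphi(t)=t^\beta$ one computes $F(r,0,\theta)=r(\theta)^\beta/\!\int_{\bS^{n-1}}r^\beta d\sigma$ and $y_{r,0}=\omega_n\!\int_{\bS^{n-1}}r^{\,n-1+\beta}d\sigma/\!\int_{\bS^{n-1}}r^\beta d\sigma$, whence the \abbr{ODE} collapses to $\dot r_t(\theta)=r_t(\theta)^\beta/I_t$ with $I_t:=\int_{\bS^{n-1}}r_t^{\,n-1+\beta}d\sigma$ independent of $\theta$. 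Since $\beta<0$, putting $u_t:=r_t^{\,1-\beta}$ gives $\dot u_t=(1-\beta)/I_t$, again independent of $\theta$; thus $r_t(\theta)^{1-\beta}=r_0(\theta)^{1-\beta}+c(t)$ with $c(t):=(1-\beta)\int_0^t I_s^{-1}\,ds$ strictly increasing, the flow manifestly preserves $C^1_+(\bS^{n-1})$, and $c(t)\to\infty$ (if $c$ stayed bounded then $r_t$ would remain in a fixed compact subset of $C_+$, making $I_t$ bounded and $\dot c$ bounded below by a positive constant, a contradiction; in fact $c(t)\asymp t^{(1-\beta)/n}$). Hence $r_t(\theta)=\big(r_0(\theta)^{1-\beta}+c(t)\big)^{1/(1-\beta)}=c(t)^{1/(1-\beta)}(1+o(1))$ uniformly in $\theta$, so by Proposition \ref{vol-growth}, $(Leb(r_0)+t)^{-1/n}r_t\to Leb(\psi)^{-1/n}\psi$ in $L^2$ for $\psi$ the centred unit-volume ball; this proves attractivity for $\cC=C^1_+(\bS^{n-1})$, and uniqueness is automatic since every $C^1_+$ initial datum has the same limit. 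For the rule \eqref{def-conv-F} with $\wt r=r$, the centred ball is a fixed shape by the symmetry argument of part (b), and attractivity (once one checks the flow stays in a suitable regularity class by elliptic regularity) rests on showing that $t\mapsto\max_\theta r_t(\theta)/\min_\theta r_t(\theta)$ is non-increasing, strictly unless $r_t$ is constant; differentiating via the envelope theorem and cancelling the common normalisation, this reduces to the geometric inequality $F(r,0,\theta_{\max})/r(\theta_{\max})\le F(r,0,\theta_{\min})/r(\theta_{\min})$ --- harmonic measure from $0$ giving at least as much $\sigma$-mass per unit radius near the nearest boundary point as near the farthest --- after which convergence to the ball follows by a LaSalle-type argument using the uniform bounds on the volume-rescaled flow.

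I expect the main obstacle to be the uniform quantitative control of the harmonic-measure density in part (a) --- two-sided bounds together with $L^2$/Lipschitz dependence on the boundary $\wt r$ and the interior source $x$, over the relevant non-compact family of $C^3$ domains --- and, for the Green's-function half of part (c), establishing the monotonicity of $\max_\theta r_t/\min_\theta r_t$, i.e.\ making the ``smoothing'' nature of the interior-source Laplacian-growth flow rigorous beyond the perturbative regime.
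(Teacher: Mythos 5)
Parts (a) and (b) of your proposal follow essentially the paper's route: verify Assumptions (L), (E), (C) and \eqref{F-lbd} through Lemma \ref{molify}, Propositions \ref{ppn:lip-H}, \ref{ppn:local-lip} / \ref{lip-F-dist}, \ref{ex:erg} and Proposition \ref{ppn:implic}(c)--(d); verify Assumption (I) by linearity of convolution together with the scale invariance of harmonic measure (resp.\ homogeneity of $t^\beta$); and get ball invariance from rotational symmetry exactly as in Propositions \ref{ppn:poisson-ker}(a)--(c) and \ref{ppn:dist-rule-inv}(a)--(b). The one methodological difference in (a) is how Lipschitz dependence of the Poisson kernel on the boundary is obtained: the paper's Proposition \ref{ppn:local-lip} compares the Green's functions of two \emph{nearby} domains by pulling one back under a diffeomorphism $\phi$ with $\|\phi-{\rm Id}\|_{C^{2,\gamma}}$ controlled by $\|\wt r-\wt r'\|_{C^{2,\gamma}}$ and applying a global Schauder estimate to the Poisson equation \eqref{schauder} for the difference, whereas you propose flattening to a fixed reference domain and tracking coefficient dependence; both routes are viable. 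For part (c) on the distance rule, your explicit integration via $u_t=r_t^{1-\beta}$, $\dot u_t=(1-\beta)/I_t$, $c(t)\asymp t^{(1-\beta)/n}$, is correct and is more informative than the paper's treatment, which simply notes that $\ovl b(r)\propto r^\beta$, $\beta<0$, satisfies \eqref{suff-attr} trivially and invokes Proposition \ref{ppn:attr-sol}.

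For part (c) on the harmonic-measure rule you have a genuine gap. You propose to show that $\max_\theta r_t/\min_\theta r_t$ is non-increasing and then conclude by a LaSalle-type argument, which would require precompactness of the rescaled flow and strict decrease away from equilibrium, neither of which you establish. The paper avoids LaSalle entirely by a quantitative mechanism built into Proposition \ref{ppn:attr-sol}: once \eqref{suff-attr} makes $\mathsf{osc}(r_t)=\max r_t-\min r_t$ non-increasing, the fact that $Leb(r_t)=Leb(r_0)+t$ forces the rescaled quantity $\mathbf r_t := (Leb(r_0)+t)^{-1/n} r_t$ to satisfy
\[
\frac{d}{dt}\mathsf{osc}(\mathbf r_t)\le -\frac{1}{n(Leb(r_0)+t)}\,\mathsf{osc}(\mathbf r_t)\,,
\]
yielding the explicit decay $\mathsf{osc}(\mathbf r_t)\le \mathsf{osc}(\mathbf r_0)\bigl((Leb(r_0)+t)/Leb(r_0)\bigr)^{-1/n}\to 0$, hence $\mathbf r_t\to c_n\B$ in $L^\infty$ and $L^2$. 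Also, the required kernel inequality is sharper than the one you write: the paper proves
\[
F(r,0,\argmax_\theta r)\le \omega_n^{-1}\le F(r,0,\argmin_\theta r)
\]
by coupling a Brownian motion in the domain enclosed by $r$ with Brownian motions in the inscribed ball $\B(0,\min_\theta r)$ and circumscribed ball $\B(0,\max_\theta r)$; this immediately implies (but is stronger than) your ratio inequality $F(r,0,\argmax)/r(\argmax)\le F(r,0,\argmin)/r(\argmin)$. So the ``main obstacle'' you flagged --- monotonicity of the max/min ratio plus a LaSalle invariance step --- is precisely what the paper replaces by the quantitative oscillation-decay argument and the Brownian sandwich coupling.
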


\begin{rmk} Similar conclusions apply for other transportation rules, such as
\begin{align}\label{rule:stat-center}
H(r) =\int_{\bS^{n-1}}\wt r(z)zd\sigma(z) \,,
\end{align}
which sends the particle to a statistical center of the domain. Note that $H(r)$ may 
possibly be outside the domain, and if start-shaped $D$ is invariant for the 
\abbr{ode} corresponding to \eqref{rule:stat-center} and \eqref{def-conv-F}, then 
so are all translations of $D$ which are star-shaped. Also,
for such $H(\cdot)$ which depends on the domain only, trivially the
invariant measure $\nu_r$ is the Dirac mass at $H(r)$ and Assumption (E) 
%automatically
holds. 
\end{rmk}

%\medskip
The rest of the article is organized as follows. In Section \ref{sec:proof} we prove Theorem \ref{thm1} and in Section \ref{sec:shape} we deduce the shape result, Theorem \ref{thm2}. In Section \ref{sec:app} we
apply these theorems to concrete growth models and prove Theorem \ref{harm:measure}.

\section{Proof of Theorem \ref{thm1}}\label{sec:proof}
We start with bounding the Wasserstein 2-distance between any
two measures on a compact, connected Riemannian manifold, by 
the $L^2$-distance between their densities with respect to the 
% corresponding 
Riemannian measure.
\begin{lem}\label{H_-1}
Let $M\subseteq \R^n$ be a connected Riemannian manifold without boundary compactly embedded in $\R^n$, equipped with its Riemannian distance $d(\cdot,\cdot)$ and measure $\sigma(\cdot)$. Let $\mu,\nu$ be probability distributions on $M$ having densities $p,q$ respectively with respect to $\sigma(\cdot)$, where in addition 
$p(x) \ge c >0$ for all $x \in M$. 
Then, there exists $C=C(M,c)<\infty$ such that
\begin{align*}
W_2(\mu,\nu)\le C\|p-q\|_2\, ,
\end{align*}
where 
\begin{align*}
%\label{wasser}
W_2(\mu,\nu):=\inf\Big\{\big[\E\, d(X,Y)^2\big]^{1/2}: \text{Law }(X)=\mu, \, \text{Law }(Y)=\nu\Big\}
\end{align*}
is the Wasserstein $2$-distance between $\mu$ and $\nu$, and $\|\cdot\|_2:=\|\cdot\|_{L^2(\sigma)}$.
\end{lem}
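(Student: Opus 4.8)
The plan is to bound $W_2(\mu,\nu)$ via the dual (Kantorovich--Rubinstein type) route through a negative Sobolev norm, using the lower bound $p\ge c$ in a crucial way. First I would recall the Benamou--Brenier / Otto--Villani heuristic: since $\mu$ and $\nu$ are mutually absolutely continuous with densities bounded below (at least $p\ge c$; and $q$ need not be, so we will have to be a bit careful and interpolate along the segment $p_s=(1-s)p+sq$, whose densities are $\ge (1-s)c$), one has the estimate $W_2(\mu,\nu)\le C\,\|p-q\|_{H^{-1}(\sigma)}$, where $H^{-1}$ is the dual of $H^1(M,\sigma)$. This is a standard fact on a compact connected manifold without boundary: solve the Poisson equation $\Delta_M u_s = p-q$ on $M$ (solvable since $\int_M (p-q)\,d\sigma=0$ and $M$ is closed and connected, so the only obstruction — the constants — is killed), transport mass along the gradient flow of $u_s/p_s$, and compute the length of this path in Wasserstein space as $\int_0^1 \big(\int_M |\nabla u_s|^2/p_s \, d\sigma\big)^{1/2} ds \le (\,(1-s)c)^{-1/2}\|\nabla u_s\|_2$. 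Elliptic regularity for $\Delta_M$ on the closed manifold $M$ gives $\|\nabla u_s\|_{2} = \|p-q\|_{H^{-1}} \le C(M)\,\|p-q\|_2$, the last step just being that $L^2(\sigma)\hookrightarrow H^{-1}(\sigma)$ continuously on a compact manifold (the embedding $H^1\hookrightarrow L^2$ is bounded, so dualize). Integrating in $s$ from $0$ to $1$ and using $\int_0^1 (1-s)^{-1/2}ds = 2 < \infty$ yields $W_2(\mu,\nu)\le 2 c^{-1/2} C(M)\|p-q\|_2$, which is the claimed bound with $C=C(M,c)=2c^{-1/2}C(M)$.

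More carefully, the argument I would actually write runs as follows. Fix the linear interpolation $p_s = (1-s)p + sq$ for $s\in[0,1]$, a probability density with $p_s \ge (1-s)c$ for $s\in[0,1)$, and note $\dot p_s = q-p$. Let $u_s$ solve $-\Delta_M u_s = q - p$ with $\int_M u_s\, d\sigma = 0$; by standard elliptic theory on the compact manifold $M$ this has a unique such solution and $\|u_s\|_{H^1(\sigma)} \le C(M)\|q-p\|_{(H^1)^*} \le C(M)\|q-p\|_2$, uniformly in $s$ (in fact $u_s = (1-2s)$ times a fixed $u$, so this is immediate). Define the velocity field $v_s := \nabla u_s / p_s$, so that the continuity equation $\dot p_s + \mathrm{div}(p_s v_s) = 0$ holds (distributionally on $M$). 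Then $(p_s, v_s)_{s\in[0,1]}$ is an admissible path from $\mu$ to $\nu$ for the Benamou--Brenier formula, and
\begin{align*}
W_2(\mu,\nu) \le \int_0^1 \Big( \int_M |v_s|^2 \, p_s \, d\sigma \Big)^{1/2} ds
= \int_0^1 \Big( \int_M \frac{|\nabla u_s|^2}{p_s} \, d\sigma \Big)^{1/2} ds
\le \int_0^1 \frac{\|\nabla u_s\|_2}{\sqrt{(1-s)c}} \, ds \le \frac{2\,C(M)}{\sqrt c}\,\|p-q\|_2 \,.
\end{align*}

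The main obstacle, and the point requiring the most care, is the rigorous justification of the Benamou--Brenier upper bound $W_2(\mu,\nu)\le \int_0^1(\int |v_s|^2 p_s\,d\sigma)^{1/2}ds$ for this particular, not-too-smooth path: one must check that $(p_s,v_s)$ is a genuine weak solution of the continuity equation on $M$ with $\int_0^1\!\int |v_s|^2 p_s < \infty$, and invoke the (standard, but worth a citation, e.g. to Ambrosio--Gigli--Savaré or Villani's book) fact that such a path has Wasserstein length bounded by that action integral. A secondary technical point is that $q$ is only assumed in $L^2$ (hence $p_s$ degenerates as $s\to 1$), which is exactly why we interpolate and why the integrable singularity $(1-s)^{-1/2}$ appears rather than a uniform lower bound; if one prefers to avoid this, one can instead symmetrize by comparing both $\mu$ and $\nu$ to the uniform measure $\sigma(M)^{-1}\sigma$, at the cost of needing a lower bound on $q$ as well, which is not available here — so the interpolation route is the natural one. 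Everything else (elliptic estimates on the closed manifold, the $L^2\hookrightarrow H^{-1}$ embedding, dominated convergence in $s$) is routine.
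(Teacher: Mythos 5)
Your proof is correct, but it follows a genuinely different route from the paper's. The paper proves the lemma by quoting the dual characterization of Peyre (Theorem~1 of the cited reference \cite{Pe}), namely
\[
W_2(\mu,\nu)\le 2\sup\Big\{\big|\textstyle\int_M f\,d(\mu-\nu)\big| : f\in C^1(M),\ \textstyle\int_M|\nabla f|^2\,d\mu\le 1\Big\}\,,
\]
then observes that the constraint together with $p\ge c$ forces $\|\nabla f\|_2\le c^{-1/2}$, and concludes by Cauchy--Schwarz and the Poincar\'e inequality on the compact manifold $M$, giving $W_2\le 2c^{-1/2}c(M)\|p-q\|_2$. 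You instead reprove the relevant $W_2$-versus-$\dot H^{-1}$ comparison from scratch on the primal side: you run the Benamou--Brenier upper bound along the linear interpolation $p_s=(1-s)p+sq$, solve the Poisson equation $-\Delta_M u=q-p$ once and for all (note $u_s$ is in fact \emph{independent} of $s$, not ``$(1-2s)$ times a fixed $u$'' as you wrote, since $\dot p_s=q-p$ does not depend on $s$; the slip is harmless and if anything makes the uniform $H^1$-bound more obvious), take the velocity field $v_s=\nabla u/p_s$, and integrate the Wasserstein speed $\|v_s\|_{L^2(p_s)}\le \|\nabla u\|_2/\sqrt{(1-s)c}$ over $s\in[0,1]$, exploiting that the endpoint singularity $(1-s)^{-1/2}$ is integrable since $q$ is not assumed bounded below. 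Both approaches exploit the same two ingredients --- the lower bound $p\ge c$ and an elliptic/Poincar\'e estimate on the closed manifold --- and land on the same constant $2c^{-1/2}C(M)$. The paper's route is shorter and more black-box; yours is more self-contained but, as you rightly flag, requires invoking the one-sided Benamou--Brenier inequality (that the Wasserstein length of any admissible distributional solution of the continuity equation is bounded by the integral of the $L^2(p_s)$-norm of the velocity), which is standard but needs an explicit citation and a check that your path is admissible in the required weak sense.
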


\begin{proof}[Proof of Lemma \ref{H_-1}. ]
By \cite[Theorem 1]{Pe}, we have the variational representation
\begin{align*}
W_2(\mu,\nu) &\le 2\sup_{\{f\in C^1(M):\, \int_M|\nabla f|^2d\mu \le 1\}}\Big|\int_M
 f d(\mu-\nu)\Big|\\
&\le 2 \sup_{\{f\in C^1(M): \|\nabla f\|_2 \le c^{-1/2}\}}
\Big|\int_M (f-\ovl{f}_M) d(\mu-\nu)\Big|  \\
&\le  2\sup_{\{f\in C^1(M): \|\nabla f\|_2 \le c^{-1/2}\}} \|f-\ovl{f}_M\|_2\|p-q\|_2 
\le 2c^{-1/2}c(M)\|p-q\|_2.
\end{align*}
In the last step, we have used the Poincar\'e inequality $\|f-\ovl{f}_M\|_2\le c(M)\|\nabla f\|_2$, where $\ovl{f}_M$ denotes the $\sigma$-weighted average of $f$ in $M$ and $c(M)$ is the Poincar\'e constant.
\end{proof}

The proof of Theorem \ref{thm1} is based on considering an auxiliary process in which the slow variable is frozen (this is a standard  tool for proving averaging principles, see \cite{Kh,Ve}). 
Set
\begin{align}\label{def:Delta}
\Delta=\Delta(\ep)=\ep\log^{1/3}(\ep^{-1}) \wedge 1.
\end{align}
Given the main process $(R^\ep_t,x_t^\ep)_{t\ge0}$, we consider a family (indexed by $\ep>0$) of auxiliary dynamics $(\wh{R}_t^\ep,\wh{x}_t^\ep)_{t\ge0}$ defined piecewise on each time interval $[k\Delta,(k+1)\Delta)$ with $k\in\N$, on the same probability space $(\Omega, \cF, \P)$ as the main process, as follows. Inductively for every $k\in\N$, take the {\it {same}} Poisson clock $\{T^\ep_i\}_{i\in\N}$ used in constructing the main process, and starting at $\wh{x}_{k\Delta}^\ep=x^\ep_{k\Delta}$, let $(\wh{x}^\ep_t)_{t\in[k\Delta, (k+1)\Delta)}$ have the marginal distribution of the Markov jump process in the frozen domain $R^\ep_{k\Delta}$ defined as in \eqref{frozen-dyn}. That is, $(\wh{x}^\ep_t)_{t\in[k\Delta, (k+1)\Delta)}$ jumps at each $T^\ep_i\in[k\Delta, (k+1)\Delta]$, $i\in\N$ in the {\it{frozen}} domain $R^\ep_{k\Delta}$, by first using probability density $F(R^\ep_{k\Delta}, \wh{x}^\ep_{T_i^{\ep-}},\cdot)$ to choose a spherical angle $\wh{\xi}_i$, then applying the rule $H(R^\ep_{k\Delta},\wh{\xi}_i)$. We further put requirement on the joint law such that at each jump, $\wh{\xi}_i$ and $\xi_i$ of \eqref{bump-loc} achieve 
$\E[\, d(\xi_i,\wh{\xi}_i)^2]^{1/2}$
within twice the Wasserstein $2$-distance $W_2(\mu, \nu)$ on $\bS^{n-1}$, where $\mu=F(R^\ep_{T_i^{\ep-}}, x^\ep_{T_i^{\ep-}}, \cdot)d\sigma$, $\nu=F(R^\ep_{k\Delta}, \wh{x}^\ep_{T_i^{\ep-}}, \cdot)d\sigma$. Inductively the above procedure defines $(\wh{x}_t)_{t\ge 0}$ on $(\Omega, \cF, \P)$.

We then define $(\wh{R}^\ep_t)_{t\ge 0}$ on $(\Omega, \cF, \P)$ as the dynamics driven by the \abbr{ODE}, with $\wh{R}^\ep_0=R^\ep_0$, 
\begin{align}
\wh{R}_t^\ep&=R^\ep_0+\int_0^t b(R^\ep_{\lfloor s/\Delta\rfloor \Delta},\wh{x}^\ep_s)ds, \quad t\ge0.   \label{aux-R}
\end{align}

With the auxiliary processes in place, we proceed to the proof of the theorem. By Proposition \ref{ppn:implic}(b), starting at $ r_0 \in C_+(\bS^{n-1})$, the solution $( r_t )_{t\ge0}$ to the \abbr{ODE} \eqref{b-bar} exists and is unique in $C_+(\bS^{n-1})$. Fixing $\delta>0$, since $R^\ep_t \ge R^\ep_0= r_0$, for any $\ep >0$ 
the stopped at 
%the $\cF_t$-stopping time
$\zeta^\ep(\delta)$ of \eqref{zeta}, sample path 
$R^\ep_{t\wedge\zeta^\ep(\delta)}$ remains within $\cA_\infty(\mathsf{a})$, provided $\mathsf{a}\in (0,\delta \wedge \inf_\theta  r_0 (\theta))$. Hereafter, we only apply Assumptions (L) and (E) with Lipschitz constant $K(\mathsf{a})$, resp. convergence rate $\lambda$ in \eqref{rate}, depending on such fixed $\mathsf{a}$, for the stopped processes.

Next, by \eqref{real}, for any $u\le t$ we have per $\theta$,
\begin{align}\label{R-diff}
(R_t^\ep-R_u^\ep)(\theta)=\int_u^t b^\ep(R_s^\ep,x_s^\ep)(\theta) ds+(\Sigma_t^\ep-\Sigma_u^\ep)(\theta).
\end{align}
By \cite[Proposition 8.7]{DN},  for the stopped martingale $\Sigma^\ep_{s\wedge\zeta^\ep(\delta)}(\theta)$,
\begin{align*}
\E\Big[\sup_{s\in[u,t]}(\Sigma_{s\wedge\zeta^\ep(\delta)}^\ep-\Sigma_{u\wedge\zeta^\ep(\delta)}^\ep)^2(\theta)\Big]\le 4\ep \E\int_{u\wedge\zeta^\ep(\delta)}^{t\wedge\zeta^\ep(\delta)} b^\ep(R^\ep_s,x^\ep_s)^2(\theta)ds\,,
\end{align*}
which together with Fubini, implies that
\begin{align}\label{bound-Sigma}
\E\Big[\sup_{s\in[u,t]}\|\Sigma_{s\wedge\zeta^\ep(\delta)}^\ep-\Sigma_{u\wedge\zeta^\ep(\delta)}^\ep\|^2_2\Big]  & \le \int_{\bS^{n-1}}  \E\Big[\sup_{s\in[u,t]} (\Sigma_{s\wedge\zeta^\ep(\delta)}^\ep-\Sigma_{u\wedge\zeta^\ep(\delta)}^\ep)^2(\theta)\Big]d\sigma(\theta)  \nonumber \\
& \le 4\ep\E\int_{u\wedge\zeta^\ep(\delta)}^{t\wedge\zeta^\ep(\delta)}\|b^\ep(R^\ep_s,x^\ep_s)\|_2^2ds 
\,.
\end{align}
Further, since spherical convolution is a contraction in $L^2(\bS^{n-1})$ (per Definition \ref{def:apx-iden}), and
the Lipschitz bound \eqref{lip-b} holds throughout $\cAe_{\infty}(\mathsf{a})$, hence the norms
$\|b(R^\ep_{s\wedge\zeta^\ep(\delta)}, x^\ep_{s\wedge\zeta^\ep(\delta)})\|_2$ 
are uniformly bounded, 
\begin{align}\label{bound-b-ep}
 \E\int_{u\wedge\zeta^\ep(\delta)}^{t\wedge\zeta^\ep(\delta)}\|b^\ep(R^\ep_s,x^\ep_s)\|_2^2ds 
& \le \E\int_{u\wedge\zeta^\ep(\delta)}^{t\wedge\zeta^\ep(\delta)}\|b(R^\ep_s,x^\ep_s)\|_2^2ds 
\le C (t-u),
\end{align}
for some finite $C=C(K,\delta)$ and all $0\le u\le t$.
Consequently,  by \eqref{R-diff}-\eqref{bound-b-ep} 
and Cauchy-Schwarz, for any $0\le u\le t$,
\begin{align}\label{R-bound}
\frac{1}{2}\E\| R_{t\wedge\zeta^\ep(\delta)}^\ep  - & R_{u\wedge\zeta^\ep(\delta)}^\ep  \|^2_2 
\le\E\big\|\int_{u\wedge\zeta^\ep(\delta)}^{t\wedge\zeta^\ep(\delta)} b^\ep(R_s^\ep,x_s^\ep)ds\big\|^2_2+\E\|\Sigma_{t\wedge\zeta^\ep(\delta)}^\ep-\Sigma_{u\wedge\zeta^\ep(\delta)}^\ep\|^2_2 
\nonumber \\
&\le \E\Big(\int_{u\wedge\zeta^\ep(\delta)}^{t\wedge\zeta^\ep(\delta)}\|b^\ep(R^\ep_s,x^\ep_s)\|_2ds\Big)^2  + 4\ep\E\int_{u\wedge\zeta^\ep(\delta)}^{t\wedge\zeta^\ep(\delta)}\|b^\ep(R^\ep_s,x^\ep_s)\|_2^2ds 
% \E\|\Sigma_{t\wedge\zeta^\ep(\delta)}^\ep-\Sigma_{u\wedge\zeta^\ep(\delta)}^\ep\|^2_2      
\nonumber \\
&\le (t-u)\E\int_{u\wedge\zeta^\ep(\delta)}^{t\wedge\zeta^\ep(\delta)}\|b^\ep(R^\ep_s,x^\ep_s)\|_2^2ds+
4 \ep C (t-u) \nonumber \\
& \le C(t-u)^2+4 \ep C (t-u).
\end{align}

We let hereafter $\tau = \zeta^\ep(\delta) \wedge \sigma^\ep(\delta)$ 
(for 
%the stopping times 
$\zeta^\ep$ and $\sigma^\ep$ of \eqref{zeta} and 
\eqref{pdf-lbd}, respectively), and rely on \eqref{R-bound} to establish the following two lemmas.
\begin{lem}\label{lem2}
In the setting of Theorem \ref{thm1}, we have that
\begin{align*}
%\label{real-aux}
\lim_{\ep\to 0}\E\Big[\sup_{0\le t\le T}\|R_{t\wedge\tau}^\ep-\wh{R}_{t\wedge\tau}^\ep\|_2^2\Big]=0.
\end{align*}
\end{lem}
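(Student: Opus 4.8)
The plan is to compare the ``true'' slow variable $R^\ep_t$, which moves at drift $b^\ep(R^\ep_s,x^\ep_s)$, with the auxiliary variable $\wh R^\ep_t$, which moves at the piecewise-frozen drift $b(R^\ep_{\lfloor s/\Delta\rfloor\Delta},\wh x^\ep_s)$. Subtracting \eqref{aux-R} from the decomposition \eqref{real} gives, for $t\le\tau$,
\[
(R^\ep_t-\wh R^\ep_t)(\theta)=\int_0^t\big[b^\ep(R^\ep_s,x^\ep_s)-b(R^\ep_{\lfloor s/\Delta\rfloor\Delta},\wh x^\ep_s)\big](\theta)\,ds+\Sigma^\ep_t(\theta).
\]
First I would handle the martingale term: by the same Doob/Fubini argument as in \eqref{bound-Sigma}--\eqref{bound-b-ep}, $\E[\sup_{t\le T}\|\Sigma^\ep_{t\wedge\tau}\|_2^2]\le 4\ep C T\to 0$. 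For the drift term I split the integrand into three pieces and bound each in $L^2(\sigma)$: (i) $\|b^\ep(R^\ep_s,x^\ep_s)-b(R^\ep_s,x^\ep_s)\|_2$, controlled by Assumption (C) together with the fact that $b^\ep$ and $b$ are uniformly $L^2$-bounded on $\cAe_\infty(\mathsf a)$ (so the expectation of this term vanishes as $\ep\to0$ by bounded convergence, after stopping at $\zeta^\ep(\delta)$); (ii) $\|b(R^\ep_s,x^\ep_s)-b(R^\ep_{\lfloor s/\Delta\rfloor\Delta},x^\ep_s)\|_2\le C\|R^\ep_s-R^\ep_{\lfloor s/\Delta\rfloor\Delta}\|_2$ by the Lipschitz bound \eqref{lip-b}, whose square-expectation is $\le C\Delta^2+4\ep C\Delta\to0$ by \eqref{R-bound} applied on an interval of length $\le\Delta$; (iii) $\|b(R^\ep_{\lfloor s/\Delta\rfloor\Delta},x^\ep_s)-b(R^\ep_{\lfloor s/\Delta\rfloor\Delta},\wh x^\ep_s)\|_2\le C|x^\ep_s-\wh x^\ep_s|$, again by \eqref{lip-b}.

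The crux is therefore term (iii): I must show $\E[\sup_{t\le T}(\int_0^{t\wedge\tau}|x^\ep_s-\wh x^\ep_s|\,ds)^2]\to0$, i.e.\ the coupled particle $x^\ep_s$ and the frozen-domain particle $\wh x^\ep_s$ stay close in a time-integrated sense. Here is where the specific coupling built above enters: on each block $[k\Delta,(k+1)\Delta)$ the two processes use the same Poisson clock $\{T^\ep_i\}$ and, at the $i$-th jump, the chosen angles $\xi_i,\wh\xi_i$ are coupled to realize (twice) the Wasserstein-$2$ distance between $F(R^\ep_{T_i^-},x^\ep_{T_i^-},\cdot)d\sigma$ and $F(R^\ep_{k\Delta},\wh x^\ep_{T_i^-},\cdot)d\sigma$ on $\bS^{n-1}$. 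By Lemma \ref{H_-1} (using that, after stopping at $\sigma^\ep(\delta)$, the density $F(R^\ep_{T_i^-},x^\ep_{T_i^-},\cdot)$ is bounded below by $\delta$), this Wasserstein distance is $\le C_\delta\|F(R^\ep_{T_i^-},x^\ep_{T_i^-},\cdot)-F(R^\ep_{k\Delta},\wh x^\ep_{T_i^-},\cdot)\|_2$, which by \eqref{lip-F} is $\le C K(\|R^\ep_{T_i^-}-R^\ep_{k\Delta}\|_2+|x^\ep_{T_i^-}-\wh x^\ep_{T_i^-}|)$. Since $x^\ep_t=H(R^\ep_{t^-},\xi_t)$ and $\wh x^\ep_t=H(R^\ep_{k\Delta},\wh\xi_t)$ on the block, the Lipschitz bound \eqref{lip-H} gives $|x^\ep_{T_i}-\wh x^\ep_{T_i}|\le K(\|R^\ep_{T_i^-}-R^\ep_{k\Delta}\|_2+d(\xi_i,\wh\xi_i))$, and taking conditional expectations one obtains a recursion $\E|x^\ep_{T_i}-\wh x^\ep_{T_i}|\le C_\delta\E|x^\ep_{T_{i-1}}-\wh x^\ep_{T_{i-1}}|+C_\delta\,\E\|R^\ep_{T_i^-}-R^\ep_{k\Delta}\|_2$ within the block (with the initial term vanishing since $\wh x^\ep_{k\Delta}=x^\ep_{k\Delta}$). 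The number of Poisson jumps in a block of length $\Delta\sim\ep\log^{1/3}(\ep^{-1})$ is typically $\Delta/\ep=\log^{1/3}(\ep^{-1})$, and the oscillation $\E\|R^\ep_s-R^\ep_{k\Delta}\|_2$ over the block is $O(\Delta)=o(1)$ by \eqref{R-bound}; iterating the recursion over the $\sim\log^{1/3}(\ep^{-1})$ steps, the geometric-type factor $C_\delta^{\#\text{jumps}}$ is kept in check precisely because the definition \eqref{def:Delta} makes the number of jumps per block grow only like $\log^{1/3}(\ep^{-1})$, so $\Delta\cdot(\text{const})^{\,\Delta/\ep}\to0$. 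Summing over the $\le T/\Delta$ blocks and using Doob's maximal inequality for the supremum over $t$, one concludes $\E[\sup_{t\le T}|x^\ep_{t\wedge\tau}-\wh x^\ep_{t\wedge\tau}|^2]\to0$ and hence the time-integrated bound needed for (iii).

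Putting the three pieces together with the martingale estimate, Cauchy--Schwarz turns $\int_0^{t\wedge\tau}(\cdots)ds$ into $(t\wedge\tau)\int_0^{t\wedge\tau}\|\cdots\|_2^2 ds$, and each of (i), (ii), (iii) contributes a term tending to $0$; taking the supremum over $t\in[0,T]$ inside the expectation (the drift part is already monotone in $t$, and the martingale part is handled by Doob) yields the claim. The main obstacle is step (iii): one must track the propagation of the particle discrepancy through the Poisson jumps within a frozen block without the geometric growth in the number of jumps overwhelming the smallness of $\Delta$, which is exactly why the block length is tuned to $\ep\log^{1/3}(\ep^{-1})$ rather than a fixed multiple of $\ep$; the lower bound on $F$ enforced by stopping at $\sigma^\ep(\delta)$, feeding into Lemma \ref{H_-1}, is what makes the Wasserstein coupling quantitatively usable.
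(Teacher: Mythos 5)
Your decomposition of the drift discrepancy into three pieces, (i) the $b^\ep-b$ error controlled by Assumption~(C), (ii) the block-freezing error $\|b(R^\ep_s,x^\ep_s)-b(R^\ep_{\lfloor s/\Delta\rfloor\Delta},x^\ep_s)\|_2$ controlled by \eqref{lip-b} and \eqref{R-bound}, and (iii) the particle discrepancy $|x^\ep_s-\wh x^\ep_s|$, is the same decomposition the paper uses (the paper combines (ii)+(iii) into one term and immediately re-splits via Lipschitz, so it is the same thing), and your martingale estimate matches \eqref{bound-Sigma}--\eqref{bound-b-ep}.

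Where you diverge genuinely from the paper is in the treatment of~(iii). The paper proves a separate Lemma~\ref{lem1} establishing $\sup_{0\le t\le T}\E[\mathbf 1_{\{t\le\tau\}}|x^\ep_t-\wh x^\ep_t|^2]\le C\ep$ by a \emph{continuous-time} argument: it writes the martingale decompositions \eqref{bump} and \eqref{aux-x} of $x^\ep$ and $\wh x^\ep$, bounds the quadratic variation of their difference via the Wasserstein coupling and Lemma~\ref{H_-1}, and closes a Gronwall inequality on $e(t)=\E[\mathbf 1_{\{t\le\tau\}}|x^\ep_t-\wh x^\ep_t|^2]$, yielding a bound $C\ep^{-2}\Delta^4 e^{C\ep^{-2}\Delta^2}$ whose smallness is exactly the reason for the choice $\Delta=\ep\log^{1/3}(\ep^{-1})$. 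You instead propose a \emph{discrete} recursion over the Poisson jumps $T_i$ within a block, with the same ingredients (the Lemma~\ref{H_-1} coupling, the lower bound $\delta$ on $F$ enforced by $\sigma^\ep(\delta)$, and the Lipschitz bounds \eqref{lip-F}, \eqref{lip-H}), and you correctly identify that the tuning of $\Delta$ is what defeats the geometric factor $C_\delta^{\#\text{jumps}}$ over the typical $\Delta/\ep=\log^{1/3}(\ep^{-1})$ jumps per block. This is a legitimate alternative route, and conceptually equivalent: both turn the per-jump Wasserstein control into a Gronwall-type inequality. The paper's continuous-time version is cleaner because the generator calculation absorbs the Poissonian randomness of the jump count automatically; your recursion would need to be supplemented by a bound on $\E[C_\delta^N]$ for $N\sim\mathrm{Poisson}(\Delta/\ep)$, and it is set up on first moments rather than second moments, which you would then need to convert (easy, since $|x^\ep_s-\wh x^\ep_s|$ is uniformly bounded on the stopping event, but it should be stated).

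Two small inaccuracies: first, your final claim $\E[\sup_{t\le T}|x^\ep_{t\wedge\tau}-\wh x^\ep_{t\wedge\tau}|^2]\to0$ is stronger than the paper's Lemma~\ref{lem1} (which only gives $\sup_t\E$, not $\E\sup_t$), and Doob's maximal inequality does not apply to $|x^\ep_t-\wh x^\ep_t|^2$ since it is not a submartingale; fortunately, for Lemma~\ref{lem2} the weaker $\sup_s\E$ bound suffices, because $t\mapsto\int_0^{t}|x^\ep_s-\wh x^\ep_s|ds$ is nondecreasing, so the supremum over $t$ is attained at $T$ and Cauchy--Schwarz reduces everything to $\int_0^T\E[\mathbf 1_{\{s\le\tau\}}|x^\ep_s-\wh x^\ep_s|^2]\,ds$. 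Second, in step~(i) you invoke "bounded convergence after stopping at $\zeta^\ep(\delta)$", but the stopping needed is at $\tau=\zeta^\ep(\delta)\wedge\sigma^\ep(\delta)$, matching the statement of Assumption~(C), which is also what the paper uses.
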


\begin{lem}\label{lem3}
In the setting of Theorem \ref{thm1}, we have that
\begin{align}
\lim_{\ep\to0}\E\Big[\sup_{0\le t\le T}\|\wh{R}_{t\wedge\tau}^\ep- r_{t\wedge\tau}  \|_2^2\Big]=0.
\end{align}
\end{lem}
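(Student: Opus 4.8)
The plan is to control $\wh R^\ep_{t\wedge\tau}-r_{t\wedge\tau}$, which by \eqref{aux-R} and \eqref{b-bar} equals $\int_0^{t\wedge\tau}\big[b(R^\ep_{\lfloor s/\Delta\rfloor\Delta},\wh x^\ep_s)-\ovl b(r_s)\big]\,ds$, by writing the integrand as
\[
\big[b(R^\ep_{\lfloor s/\Delta\rfloor\Delta},\wh x^\ep_s)-\ovl b(R^\ep_{\lfloor s/\Delta\rfloor\Delta})\big]+\big[\ovl b(R^\ep_{\lfloor s/\Delta\rfloor\Delta})-\ovl b(r_s)\big],
\]
a fast-averaging part plus a Lipschitz part. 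For the Lipschitz part, \eqref{lip-bar} at $p=2$ (valid since, up to $\tau\le\zeta^\ep(\delta)$, one has $R^\ep_{\lfloor s/\Delta\rfloor\Delta}\in\cA_\infty(\mathsf{a})$, while $r_s$ stays in some $\cA_\infty(a_1)$ by Proposition \ref{ppn:implic}(b)) gives $\|\ovl b(R^\ep_{\lfloor s/\Delta\rfloor\Delta})-\ovl b(r_s)\|_2\le K\|R^\ep_{\lfloor s/\Delta\rfloor\Delta}-r_s\|_2$, and by the triangle inequality this is at most $K\big(\|R^\ep_{\lfloor s/\Delta\rfloor\Delta}-R^\ep_s\|_2+\|R^\ep_s-\wh R^\ep_s\|_2+\|\wh R^\ep_s-r_s\|_2\big)$. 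Integrating in $s$, taking expectations and using Cauchy--Schwarz, the first summand is bounded (via the oscillation estimate \eqref{R-bound}, valid with $\tau$ in place of $\zeta^\ep(\delta)$, over time-intervals of length $\le\Delta(\ep)\to0$) by $CT\int_0^T(C\Delta^2+4\ep C\Delta)\,ds\to0$; the second by $CT^2\,\E\big[\sup_{0\le t\le T}\|R^\ep_{t\wedge\tau}-\wh R^\ep_{t\wedge\tau}\|_2^2\big]\to0$ by Lemma \ref{lem2}; and the third, the self-referential term $\|\wh R^\ep_s-r_s\|_2$, is left for a Gr\"onwall argument.

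The crux is the fast-averaging part. On a block $[k\Delta,(k+1)\Delta)$ the frozen domain is $R^\ep_{k\Delta}$, and by construction $(\wh x^\ep_s)_{s\in[k\Delta,(k+1)\Delta)}$ is, conditionally on $\cF_{k\Delta}$, the rate-$\ep^{-1}$ frozen-domain process \eqref{frozen-dyn} in domain $R^\ep_{k\Delta}$ started at $x^\ep_{k\Delta}$; rescaling time by $\ep$ identifies it with the rate-$1$ process $x^{1,R^\ep_{k\Delta}}$ run for time $\Delta/\ep$, so that
\[
\int_{k\Delta}^{(k+1)\Delta}\big[b(R^\ep_{k\Delta},\wh x^\ep_s)-\ovl b(R^\ep_{k\Delta})\big]\,ds=\ep\int_0^{\Delta/\ep}\big[b(R^\ep_{k\Delta},x^{1,R^\ep_{k\Delta}}_v)-\ovl b(R^\ep_{k\Delta})\big]\,dv.
\]
Since $\{k\Delta<\tau\}\in\cF_{k\Delta}$ and on this event $(R^\ep_{k\Delta},x^\ep_{k\Delta})\in\cAe_\infty(\mathsf{a})$ ($R^\ep$ is increasing, stays in $\cA_\infty(\mathsf{a})$ before $\zeta^\ep(\delta)$, and $x^\ep_{k\Delta}$ lies in the image of $H(R^\ep_{T_i^-},\cdot)$ for the last Poisson time $T_i\le k\Delta$, with $R^\ep_{T_i^-}\le R^\ep_{k\Delta}$), Assumption (E), namely \eqref{rate} with $t=\Delta/\ep$ and $t_0=0$, applies conditionally to give $\E\big[\big\|\int_{k\Delta}^{(k+1)\Delta}[b(R^\ep_{k\Delta},\wh x^\ep_s)-\ovl b(R^\ep_{k\Delta})]\,ds\big\|_2^2\,\,|\,\,\cF_{k\Delta}\big]\le\Delta^2\lambda(\Delta/\ep,\mathsf{a})$ on $\{k\Delta<\tau\}$. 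Summing the at most $\lceil T/\Delta\rceil$ such full blocks via Cauchy--Schwarz, and bounding the single partial block (up to $t\wedge\tau$) crudely by $C(\mathsf{a})\Delta$, using that $b$ and $\ovl b$ are uniformly bounded on $\cAe_\infty(\mathsf{a})$ (by \eqref{lip-b} and boundedness of $\cAe_\infty(\mathsf{a})$), one obtains
\[
\E\Big[\sup_{0\le t\le T}\Big\|\int_0^{t\wedge\tau}\big[b(R^\ep_{\lfloor s/\Delta\rfloor\Delta},\wh x^\ep_s)-\ovl b(R^\ep_{\lfloor s/\Delta\rfloor\Delta})\big]\,ds\Big\|_2^2\Big]\le(T+1)^2\lambda(\Delta/\ep,\mathsf{a})+C\Delta^2,
\]
which tends to $0$ as $\ep\to0$, since $\Delta/\ep=\log^{1/3}(\ep^{-1})\to\infty$ by \eqref{def:Delta} and hence $\lambda(\Delta/\ep,\mathsf{a})\to0$.

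Collecting, letting $E^\ep$ denote the sum of the three terms above that have been shown to vanish in $L^2$ as $\ep\to0$, we obtain for every $t\le T$
\[
\sup_{0\le t'\le t}\|\wh R^\ep_{t'\wedge\tau}-r_{t'\wedge\tau}\|_2\le E^\ep+K\int_0^t\sup_{0\le u\le s}\|\wh R^\ep_{u\wedge\tau}-r_{u\wedge\tau}\|_2\,ds,
\]
and squaring, applying Cauchy--Schwarz to the time integral, taking expectations, and invoking Gr\"onwall's lemma (legitimate since $\wh R^\ep_{\cdot\wedge\tau}$ and $r_{\cdot\wedge\tau}$ are $\|\cdot\|_2$-bounded on $[0,T]$ up to $\tau$) yields $\E\big[\sup_{0\le t\le T}\|\wh R^\ep_{t\wedge\tau}-r_{t\wedge\tau}\|_2^2\big]\le C(K,T)\,\E[(E^\ep)^2]\to0$, which is the assertion. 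I expect the main obstacle to be making the fast-averaging step fully rigorous: coordinating the stopping time $\tau$ with the block decomposition, checking that the relevant states stay in $\cAe_\infty(\mathsf{a})$ so that the \emph{uniform} ergodic rate $\lambda(\cdot,\mathsf{a})$ of Assumption (E) is available, and the conditional-expectation bookkeeping over the $\sim T/\Delta$ blocks.
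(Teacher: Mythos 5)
Your proposal is correct and follows essentially the same path as the paper's proof: the same block decomposition into a fast-averaging term (handled via Assumption (E) conditioned on $\cF_{k\Delta}$, summed over blocks by Cauchy--Schwarz) and a Lipschitz term (handled via \eqref{lip-bar}, the oscillation bound \eqref{R-bound}, and Lemma \ref{lem2}), closed by Gr\"onwall. The only cosmetic difference is that the paper routes the Lipschitz comparison through the intermediate $\ovl{b}(R^\ep_s)$ before comparing to $\ovl{b}(r_s)$, whereas you go directly to $\ovl{b}(r_s)$ and then triangle-split the argument; the estimates invoked are identical.
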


While deferring the proofs of Lemmas \ref{lem2} and \ref{lem3} to the end of the section, we
note that these lemmas together with the definition \eqref{kappa-ep} of $\kappa^\ep$ yield that
\begin{align}\label{L2St}
\lim_{\ep\to0}\P(\kappa^\ep\le T\wedge\tau)=0\,.
\end{align}
In view of Proposition \ref{ppn:implic}(d), this in turn results with 
\begin{align}\label{remove-st}
\lim_{\ep\to0}\P(\zeta^\ep(\delta)< T\wedge\sigma^\ep(\delta))=0
\end{align}
(provided $\delta<\delta_n$). The conclusions of Lemma \ref{lem2} and Lemma \ref{lem3}
are thus strengthened 
%by \eqref{remove-st} 
to apply with $\sigma^\ep(\delta)$ instead of $\tau$, so combining these 
lemmas with Markov's inequality completes the proof of the theorem.
 
Turning to establish Lemmas \ref{lem2} and \ref{lem3}, we start with the following 
key bound.
\begin{lem}\label{lem1}
In the setting of Theorem \ref{thm1}, for some finite $C=C(K(\mathsf{a}),\delta)$ we have 
%that
\begin{align*}
%\label{x-approx}
\sup_{0\le t\le T}\E\Big[{\bf 1}_{\{ t \le \tau \}} |x^\ep_{t}-\wh{x}^\ep_{t}|^2\Big]\le C\ep.
\end{align*}
\end{lem}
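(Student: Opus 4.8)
The plan is to compare $x^\ep_t$ and $\wh{x}^\ep_t$ directly through their common Poisson clock $\{T^\ep_i\}$, exploiting that on each block $[k\Delta,(k+1)\Delta)$ both processes start from the same point $x^\ep_{k\Delta}=\wh{x}^\ep_{k\Delta}$ and that their jumps have been coupled so as to nearly attain the Wasserstein-$2$ distance between the corresponding hitting densities. Fix $t\le\tau$, let $k=\lfloor t/\Delta\rfloor$, and write the difference $x^\ep_t-\wh{x}^\ep_t$ as a telescoping sum over the jumps $T^\ep_i\in[k\Delta,t]$. Each jump contributes $H(R^\ep_{T_i^{\ep-}},\xi_i)-H(R^\ep_{k\Delta},\wh{\xi}_i)$; using the Lipschitz bound \eqref{lip-H} (valid on $\cAe_\infty(\mathsf{a})$ since we have stopped at $\zeta^\ep(\delta)$), this is at most $K\big(\|R^\ep_{T_i^{\ep-}}-R^\ep_{k\Delta}\|_2 + |\xi_i-\wh{\xi}_i|\big)$. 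So the total displacement is controlled by $K\sum_i (\|R^\ep_{T_i^{\ep-}}-R^\ep_{k\Delta}\|_2 + d(\xi_i,\wh\xi_i))$ where the sum is over the (random) number $N_k$ of jumps in the block.

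The second ingredient bounds $d(\xi_i,\wh\xi_i)$ in conditional mean. By construction $\xi_i,\wh\xi_i$ attain twice $W_2(\mu_i,\nu_i)$, where $\mu_i = F(R^\ep_{T_i^{\ep-}},x^\ep_{T_i^{\ep-}},\cdot)d\sigma$ and $\nu_i=F(R^\ep_{k\Delta},\wh x^\ep_{T_i^{\ep-}},\cdot)d\sigma$. Since we are on $\{t\le\sigma^\ep(\delta)\}$, the density of $\mu_i$ is bounded below by $\delta$, so Lemma \ref{H_-1} (applied on $M=\bS^{n-1}$) gives $W_2(\mu_i,\nu_i)\le C(\delta)\,\|F(R^\ep_{T_i^{\ep-}},x^\ep_{T_i^{\ep-}},\cdot)-F(R^\ep_{k\Delta},\wh x^\ep_{T_i^{\ep-}},\cdot)\|_2$, and then \eqref{lip-F} bounds this $L^2$-difference by $K\big(\|R^\ep_{T_i^{\ep-}}-R^\ep_{k\Delta}\|_2 + |x^\ep_{T_i^{\ep-}}-\wh x^\ep_{T_i^{\ep-}}|\big)$. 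Combining, the mean-squared per-jump increment is controlled by (a constant times) $\E[\|R^\ep_{T_i^{\ep-}}-R^\ep_{k\Delta}\|_2^2] + \E[|x^\ep_{T_i^{\ep-}}-\wh x^\ep_{T_i^{\ep-}}|^2]$. For the first term, $T_i^{\ep-}-k\Delta\le\Delta$, so the bound \eqref{R-bound} (with $u=k\Delta$, after stopping at $\zeta^\ep(\delta)$) gives $\E[\|R^\ep_{(T_i^{\ep-})\wedge\tau}-R^\ep_{(k\Delta)\wedge\tau}\|_2^2]\le C\Delta^2 + 4\ep C\Delta \le C'\ep\Delta$ for $\ep$ small (using $\Delta = \ep\log^{1/3}(\ep^{-1})\wedge 1$, so $\Delta^2\le\ep\Delta$ eventually).

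Now set $\phi_k(t):=\E[\I_{\{t\le\tau\}}|x^\ep_{t}-\wh x^\ep_{t}|^2]$ and run a discrete Gronwall argument over the jumps inside the block. The number $N_k$ of Poisson points in $[k\Delta,t]$ has mean $\le\Delta/\ep = \log^{1/3}(\ep^{-1})\wedge\ep^{-1}$, and independent of everything else; using Cauchy–Schwarz on the $N_k$-fold sum (which costs a factor $N_k$), taking expectations, and conditioning on the clock, one gets a recursion of the shape $\phi_k(t) \le C\,\E[N_k]\sum_{T_i^\ep\le t}\big(\ep\Delta + \phi_k(T_i^{\ep-})\big)$, i.e.\ after bounding $\E[N_k]\cdot(\#\text{jumps})$ by $(\Delta/\ep)^2\lesssim\log^{2/3}(\ep^{-1})$ worth of terms each of size $\lesssim\ep\Delta$, the inhomogeneous part is $O(\ep\,\Delta^3/\ep^2)=O(\Delta^3/\ep)=O(\ep^2\log(\ep^{-1}))=o(\ep)$... and the homogeneous part is handled by discrete Gronwall with total multiplicative weight $\exp(C\Delta/\ep\cdot\Delta)=\exp(C\Delta^2/\ep)=\exp(O(\ep\log^{2/3}\ep^{-1}))\to 1$. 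Hence $\phi_k(t)\le C\ep$ uniformly in $k$ and in $t\le T$, which is the claim. The main obstacle is organizing this Gronwall bookkeeping so that the logarithmic factors coming from $\E[N_k]\approx\Delta/\ep$ are absorbed: this is precisely why $\Delta$ is chosen as $\ep\log^{1/3}(\ep^{-1})$ rather than $\ep$ — large enough that blocks contain $\gg 1$ jumps (so freezing the slow variable is meaningful) yet small enough that $\Delta^2/\ep\to 0$ and the accumulated coupling error stays $O(\ep)$; one has to check the exponents line up, and in particular that the self-consistent term $\phi_k$ appearing on the right enters with a coefficient that is $o(1)$ rather than $O(1)$, which requires using $\E[N_k\cdot\text{(number of summands)}]\cdot\Delta \to 0$.
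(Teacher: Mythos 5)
Your overall plan—compare the two particle processes jump by jump, control per-jump discrepancies by the Lipschitz bound \eqref{lip-H} on $H$, the Wasserstein coupling together with Lemma \ref{H_-1} and \eqref{lip-F}, and the drift of $R^\ep$ via \eqref{R-bound}—is in the right spirit, and it genuinely differs from the paper's route: the paper instead writes both $x^\ep$ and $\wh{x}^\ep$ using the generator (martingale plus drift) decomposition from \eqref{bump} and \eqref{aux-x}, bounds the combined martingale by \cite[Proposition 8.7]{DN}, and runs a \emph{continuous-time} Gronwall inequality of the form $e(t)\le C\ep^{-2}\Delta^4+C\ep^{-2}\Delta\int_{k\Delta}^t e(s)\,ds$ on each block, yielding $e(t)\le 2C\ep^{-2}\Delta^4 e^{C\ep^{-2}\Delta^2}$. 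Your discrete, per-jump recursion is a legitimate alternative and, if done carefully, would avoid the stochastic-calculus input entirely.

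However, as written there are several genuine errors. First, the ``telescoping sum over jumps'' premise is wrong: the particle position is \emph{reset} at each jump to $H(R^\ep_{T_i^{\ep-}},\xi_i)$, so $x^\ep_t-\wh{x}^\ep_t$ equals the difference after the \emph{last} jump only, not a sum over all $N_k$ jumps; the Cauchy--Schwarz step introducing the factor $\E[N_k]$ is spurious, and the correct structure is a recursion $e_i\le C(\Delta^2 + e_{i-1})$ over jump indices feeding $|x^\ep_{T_i^{\ep-}}-\wh{x}^\ep_{T_i^{\ep-}}|=|x^\ep_{T_{i-1}^\ep}-\wh{x}^\ep_{T_{i-1}^\ep}|$ back into the $F$-Lipschitz bound. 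Second, the claim ``$\Delta^2\le\ep\Delta$ eventually'' is false and in fact contradicts your own observation that blocks contain $\gg 1$ jumps: with $\Delta=\ep\log^{1/3}(\ep^{-1})$ we have $\Delta/\ep=\log^{1/3}(\ep^{-1})\to\infty$, so $\Delta^2\gg\ep\Delta$. The dominant term in \eqref{R-bound} over a block is $C\Delta^2$, not $C'\ep\Delta$. Third, the Gronwall bookkeeping is off by a factor of $\ep^{-2}$: you claim the multiplicative weight is $\exp(C\Delta^2/\ep)=\exp(O(\ep\log^{2/3}\ep^{-1}))\to 1$, but the paper's exponent is $C\ep^{-2}\Delta^2=C\log^{2/3}(\ep^{-1})\to\infty$; the intuition that the Gronwall factor tends to $1$ is simply false. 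What actually saves the day is that this factor, while divergent, grows slower than any negative power of $\ep$, and the inhomogeneous term $\ep^{-2}\Delta^4=\ep^2\log^{4/3}(\ep^{-1})$ carries an extra power of $\ep$ to absorb it. Because your stated mechanism for closing the Gronwall loop is wrong, the argument as given does not establish the $O(\ep)$ bound; repairing it requires discarding the $N_k$ bookkeeping, replacing $\ep\Delta$ by $\Delta^2$, and then verifying that $\Delta^2\cdot(\text{Gronwall factor of order } e^{C\log^{1/3}\ep^{-1}\log\log\ep^{-1}})=O(\ep)$, which is true but for the opposite reason you give.
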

\begin{proof}[Proof of Lemma \ref{lem1}. ]
Per \eqref{frozen-dyn}, for each $k\in\N$ the auxiliary process $(\wh{x}^\ep_t)_{t\in[k\Delta, (k+1)\Delta)}$ admits the decomposition
\begin{align}
\wh{x}^\ep_t&=x_{k\Delta}^\ep+\ep^{-1}\int_{k\Delta}^th(R^\ep_{k\Delta},\wh{x}^\ep_s)ds+\wh{M}^\ep_{k\Delta,t}, \quad t\in[k\Delta, (k+1)\Delta), \label{aux-x}
\end{align}
for some $\R^n$-valued, $\cF_t$-martingale $\wh{M}^\ep_{k\Delta,t}$
(setting $\wh{M}^\ep_{k\Delta,t} \equiv 0$ when $t \le k \Delta$).
Taking the difference of \eqref{aux-x} with  \eqref{bump}, we have that for $t\in[k\Delta, (k+1)\Delta)$,
\begin{equation} \label{diff-x}
\begin{aligned}
(x_t^\ep-\wh{x}_t^\ep)-\int_{k\Delta}^t\ep^{-1}[h(R_s^\ep,x_s^\ep)-h(R^\ep_{k\Delta},\wh{x}_s^\ep)]ds=&M^\ep_t-M^\ep_{k\Delta}-\wh{M}^\ep_{k\Delta,t},
\end{aligned}
\end{equation}
is an $\R^n$-valued martingale. 

Considering the generator of $(x_t^\ep-\wh{x}^\ep_t)_{t\in[k\Delta,(k+1)\Delta)}$, we have
by \cite[Proposition 8.7]{DN} and \eqref{lip-H}  that for any $t \in[k\Delta,(k+1)\Delta)$,
\begin{align}\label{use-lip-H}
 \E| \,  & M^\ep_{t\wedge\tau}  -  M^\ep_{k\Delta \wedge \tau} 
-\wh{M}^\ep_{k\Delta, t\wedge\tau}|^2   \nonumber \\ & 
\le \frac{4}{\ep}\E  \, {\bf 1}_{\{k \Delta \le  \tau\}}
\int_{k\Delta}^{t \wedge \tau}
 \mathsf{E} \big|H(R_{s}^\ep,\xi_{s})-x^\ep_{s}
 -H(R^\ep_{k\Delta}, \wh{\xi}_{s})+\wh{x}^\ep_{s}\big|^2ds \nonumber \\
 & \le\frac{12 K^2}{\ep}  \E\, 
{\bf 1}_{\{k \Delta \le  \tau\}}
\int_{k\Delta}^{t \wedge \tau} 
\left(\|R_{s}^\ep-R^\ep_{k\Delta}\|_2^2
+ |x^\ep_{s} - \wh{x}^\ep_{s}|^2  + \mathsf{E}|\xi_{s}-\wh{\xi}_{s}|^2 \right) ds,      
\end{align}
 where the inner {\em conditional} expectation $\mathsf{E}$ is only over $(\xi_{s},\wh{\xi}_{s})$, having marginal densities $F(R^\ep_{s},x^\ep_{s},\cdot)$ and 
 $F(R^\ep_{k\Delta},\wh{x}^\ep_{s},\cdot)$, 
 %respectively, 
 with respect to $\sigma(\cdot)$ on $\bS^{n-1}$. By the coupling we chose, and Lemma \ref{H_-1} with $M=\bS^{n-1}$, $p=F(R^\ep_{s},x^\ep_{s},\cdot)$ bounded below by $\delta$ when $s < \tau$, 
 we have in \eqref{use-lip-H} for $s < \tau$,
\begin{align*}
\mathsf{E}|\xi_{s\wedge\tau}-\wh{\xi}_{s\wedge\tau}|^2 \le 4 W_2(\xi_{s},\wh{\xi}_{s})^2
&\le C(\delta)\|F(R^\ep_{s},x^\ep_{s},\cdot)-F(R^\ep_{k \Delta},\wh{x}^\ep_{s},\cdot)\|_2^2\\
&\le CK^2(\|R^\ep_{s}-R^\ep_{k  \Delta}\|_2^2+|x^\ep_{s}-\wh{x}^\ep_{s}|^2) \, ,
\end{align*}
using \eqref{lip-F} in the last line. Consequently, we obtain from \eqref{use-lip-H} that
\begin{align}\label{M-bound}
\E|M^\ep_{t\wedge\tau} & -M^\ep_{k\Delta\wedge\tau}  -\wh{M}^\ep_{k\Delta, t\wedge\tau}|^2 \nonumber \\
&\le C(K, \delta)\ep^{-1}\E\Big[{\bf 1}_{\{ k\Delta \le \tau \}}\int_{k\Delta}^{t\wedge\tau}(\|R^\ep_{s}-R^\ep_{k \Delta }\|_2^2+|x^\ep_{s}-\wh{x}^\ep_{s}|^2)ds\Big] \nonumber \\
&\le \wt{C} \ep^{-1}\big(
\Delta^3 + \int_{k\Delta}^{t} \E [{\bf 1}_{\{ s \le \tau \}} |x_{s}^\ep-\wh{x}_{s}^\ep|^2 ] ds\big)\, , 
\end{align}
using in the last line \eqref{R-bound} and that $\Delta \ge \ep$. From \eqref{diff-x} at $t \wedge \tau$, 
Cauchy-Schwarz inequality,
\eqref{M-bound}, \eqref{R-bound} and $\Delta \ge \ep$,
 we have that for some finite $C=C(K,\delta)$ and any $t\in[k\Delta, (k+1)\Delta)$, $k=0,1,\ldots,\lfloor T/\Delta\rfloor$, 
\begin{align*}
& \frac{1}{2}\E [\, {\bf 1}_{\{ t \le \tau \}}  |x^\ep_{t} -\wh{x}^\ep_{t}|^2 ] =
\frac{1}{2}\E [\, {\bf 1}_{\{ t \le \tau \}} |x^\ep_{t\wedge \tau} -\wh{x}^\ep_{t \wedge \tau}|^2 ]
 \\
&\le \ep^{-2}\E\Big[{\bf 1}_{\{ k\Delta \le \tau \}} \int_{k\Delta}^{t\wedge\tau}|h(R_s^\ep,x_s^\ep)-h(R^\ep_{k\Delta},\wh{x}_s^\ep)|ds\Big]^2 +\E|M^\ep_{t\wedge\tau}-M^\ep_{k\Delta\wedge\tau}-\wh{M}^\ep_{k\Delta, t\wedge\tau}|^2\nonumber\\
&\le 2 K^2\ep^{-2}\Delta \, \E \Big[{\bf 1}_{\{ k\Delta \le \tau \}} \int_{k\Delta}^{t\wedge\tau}(\|R_s^\ep-R^\ep_{k\Delta}\|_2^2+|x_s^\ep-\wh{x}_s^\ep|^2)ds \Big]\\
&\quad\quad\quad\quad +\wt{C} \ep^{-1}\big(\Delta^3+\int_{k\Delta}^{t}\E [ {\bf 1}_{\{s \le \tau \}}|x_{s}^\ep-
\wh{x}_{s }^\ep|^2 ] ds\big)\nonumber\\
&\le C \ep^{-2}\Delta^4+C \ep^{-2}\Delta \int_{k\Delta}^{t} \E [ {\bf 1}_{\{s \le \tau \}}|x_{s}^\ep-\wh{x}_{s}^\ep|^2 ] ds \, ,
%\label{approx-x}
\end{align*}	
utilizing also that $h(\cdot,\cdot)$ is Lipschitz, as in \eqref{lip-h} of Proposition \ref{ppn:implic}(a).
By Gronwall's inequality for $e(s)=\E[{\bf 1}_{\{s \le \tau \}}|x_{s}^\ep-\wh{x}_{s}^\ep|^2 ]$,
starting at $e(k\Delta)=0$, we get that 
\begin{align}\label{x-approx}
\sup_{0 \le t \le T} \{ e(t) \} \le 2 C \ep^{-2}\Delta^4e^{C\ep^{-2}\Delta^2}.
\end{align}
For our choice \eqref{def:Delta} of $\Delta=\Delta(\ep)$, the \abbr{RHS} of \eqref{x-approx} is bounded by 
$C\ep$ for some (other) finite $C=C(K,\delta)$, as claimed.
\end{proof}

\begin{proof}[Proof of Lemma \ref{lem2}. ]
Per \eqref{real} and \eqref{aux-R}, for any $t\ge0$ we have that
\begin{align*}
R^\ep_t-\wh{R}^\ep_t=\int_0^t[b^\ep(R^\ep_s,x_s^\ep)-b(R^\ep_s,x^\ep_s)]ds+\int_0^t[b(R_s^\ep,x_s^\ep)-b(R^\ep_{\lfloor s/\Delta\rfloor \Delta},\wh{x}_s^\ep)]ds+\Sigma_t^\ep \,.
\end{align*}
Consequently, by Cauchy-Schwartz, 
\begin{align*}
&\frac{1}{3}\E\sup_{0\le t\le T}\|R_{t\wedge\tau}^\ep-\wh{R}_{t\wedge\tau}^\ep\|_2^2 
\le T\E\int_0^T\|b^\ep(R^\ep(_{s\wedge\tau},x_{s\wedge\tau}^\ep)-b(R^\ep_{s\wedge\tau},x^\ep_{s\wedge\tau})\|_2^2ds \\
&\qquad\qquad+T\E\int_0^{T\wedge\tau}\|b(R_s^\ep,x_s^\ep)-b(R^\ep_{\lfloor s/\Delta\rfloor \Delta},\wh{x}_s^\ep)\|_2^2ds+\E\sup_{0\le t\le T}\|\Sigma_{t\wedge\tau}^\ep\|_2^2\, ,
\end{align*}
where the first term tends to zero as $\ep\to0$ by Assumption (C) and the uniform boundedness of the integrand. The remaining two terms are bounded 
via \eqref{lip-b}, \eqref{bound-Sigma}-\eqref{R-bound} and Lemma \ref{lem1} by
\begin{align*}
K^2T\E\int_0^{T} & {\bf 1}_{\{ s \le \tau \}} \big(\|R_s^\ep-R^\ep_{\lfloor s/\Delta\rfloor \Delta}\|_2^2+|x_s^\ep-\wh{x}_s^\ep|^2\big)ds+CT\ep\\
&\le CT\int_0^T(s-\lfloor s/\Delta\rfloor \Delta)^2ds+CT^2\ep+CT\ep
\le CT^2\Delta^2+CT^2\ep \,,
\end{align*}
for some generic constants $C=C(K,\delta)$, hence
tending to zero as well.
\end{proof}

\begin{proof}[Proof of Lemma \ref{lem3}. ]
Per \eqref{rate} and the fact that the event $\{k\Delta\le\zeta^\ep(\delta)\}$ is measurable on 
$\sigma\left(R^\ep_s, s \le k\Delta \right)$, we have that uniformly for 
$k=0,1,\ldots,\lfloor T/\Delta \rfloor-1$, 
\begin{align} \label{ergodic}
\E\Big[\mathbf{1}_{\{k\Delta\le\tau\}} & \big\|\int_{k\Delta}^{(k+1)\Delta}[b(R^\ep_{k\Delta},\wh{x}^\ep_s)-\ovl{b}(R^\ep_{k\Delta})]ds\big \|_2^2\Big] \nonumber \\
&=\E\Big[\mathbf{1}_{\{k\Delta\le\tau\}}\big \|\int_{k\Delta}^{k\Delta+\Delta/\ep}\ep [b(R^\ep_{k\Delta},\wh{x}_t^1)-\ovl{b}(R^\ep_{k\Delta})] dt \big \|_2^2\Big] \nonumber \\
\le \Delta^2\E\Big[\,& \mathbf{1}_{\{k\Delta\le\zeta^\ep(\delta)\}}  \E\Big( \big \|
\frac{1}{\Delta/\ep}
\int_{k\Delta}^{k\Delta+\Delta/\ep} [b(R^\ep_{k\Delta},\wh{x}_t^1) -\ovl{b}(R^\ep_{k\Delta})]dt\big \|_2^2\;\;\Big| R^\ep_s, s \le k\Delta\Big)\Big] \nonumber \\
&\qquad \qquad \le \Delta^2\lambda(\Delta/\ep,\mathsf a) \, .
\end{align}
It then follows from  \eqref{ergodic}, \eqref{lip-bar} and \eqref{R-bound} that for some finite $C=C(K,\delta)$ and any $t\in[0,T]$,

\begin{align}\label{use-erg}
\frac{1}{2}\E\sup_{0\le u\le t} & \Big \|\int_0^{u\wedge\tau}[b(R^\ep_{\lfloor s/\Delta\rfloor \Delta},\wh{x}^\ep_s)-\ovl{b}(R^\ep_s)]ds\Big \|_2^2 \nonumber \\
&\le \E\sup_{0\le u\le t}\Big \|\int_0^{u\wedge\tau}[b(R^\ep_{\lfloor s/\Delta\rfloor \Delta},\wh{x}^\ep_s)-\ovl{b}(R^\ep_{\lfloor s/\Delta\rfloor \Delta})]ds\Big \|_2^2  \nonumber \\
& \qquad\qquad\qquad\qquad + \E\Big(\int_0^{t\wedge\tau}\|\ovl{b}(R^\ep_{\lfloor s/\Delta\rfloor \Delta})-\ovl{b}(R^\ep_s)\|_2ds\Big)^2                      \nonumber \\
&\le \lfloor t/\Delta\rfloor \sum_{k=0}^{\lfloor t/\Delta\rfloor-1}\E\,\mathbf{1}_{\{k\Delta\le\tau\}}\Big \|\int_{k\Delta}^{(k+1)\Delta}[b(R_{k\Delta}^\ep,\wh{x}^\ep_s)-\ovl{b}(R^\ep_{k\Delta})]ds\Big \|_2^2 +C\Delta^2 \nonumber \\
&\qquad \qquad  \qquad\qquad+t\E\int_0^{t\wedge\tau}\|\ovl{b}(R^\ep_{\lfloor s/\Delta\rfloor \Delta})-\ovl{b}(R^\ep_s)\|_2^2ds   \nonumber   \\
&\le t^2\lambda(\Delta/\ep,\mathsf a)+CK^2t^2\Delta^2,  
\end{align}
where $\lambda(\Delta/\ep, \mathsf a)\to0$ as $\ep\to0$ by \eqref{rate} since $\Delta/\ep\to\infty$. We proceed to bound
\begin{align*}
m^\ep(t):=\E\Big[\sup_{0\le u\le t}\|\wh{R}_{u\wedge\tau}^\ep-r_{u\wedge\tau}\|_2^2\Big]
\end{align*}
via Gronwall's inequality. Per \eqref{aux-R} and \eqref{b-bar}, for any $t\ge0$ we have that
\begin{align*}
%\label{approx-hat}
\wh{R}^\ep_t- r_t =\int_0^t[b(R^\ep_{\lfloor s/\Delta\rfloor \Delta},\wh{x}^\ep_s)-\ovl{b}(R^\ep_s)]ds+\int_0^t[\ovl{b}(R^\ep_s)-\ovl{b}( r_s )]ds\,.
\end{align*}
By \eqref{lip-bar} and  \eqref{use-erg} we have that for any $t \le T$,
\begin{align*}
&\quad \frac{1}{2}m^\ep(t)\\
&\le \E\sup_{0\le u\le t}\Big \| \int_0^{u\wedge\tau}[b(R^\ep_{\lfloor s/\Delta\rfloor \Delta},\wh{x}^\ep_s)-\ovl{b}(R^\ep_s)]ds\Big \|^2_2+\E\sup_{0\le u\le t} \Big \|\int_0^{u\wedge\tau}[\ovl{b}(R^\ep_s)-\ovl{b}( r_s )] ds\Big \|_2^2\\
%&\le t^2\lambda(\Delta/\ep, \mathsf a)+CK^2t^2\Delta^2+ t\E\int_0^{t\wedge\tau}\|\ovl{b}(R^\ep_s)-\ovl{b}( r_s )\|^2_2ds\\
&\le t^2\lambda(\Delta/\ep, \mathsf a)+CK^2t^2\Delta^2 + t K^2\E\int_0^{t\wedge\tau}\|R^\ep_s- r_s \|^2_2ds\\
&\le T^2\lambda(\Delta/\ep,\mathsf a)+CK^2T^2\Delta^2+2T^2K^2 
\E\sup_{0\le u\le T}\|R_{u\wedge\tau}^\ep-\wh{R}_{u\wedge\tau}^\ep\|_2^2 +2TK^2\int_0^t m^\ep (s)ds \, .
\end{align*}
Gronwall's inequality and Lemma \ref{lem2} yield that
\begin{align*}
%\label{approx-bar}
m^\ep(T)\le 2 \Big(T^2\lambda(\Delta/\ep,\mathsf a)+C K^2T^2\Delta^2+ 2 T^2K^2\E\sup_{0\le u\le T}\|R_{u\wedge\tau}^\ep-\wh{R}_{u\wedge\tau}^\ep\|_2^2\Big) e^{4 T^2K^2},
\end{align*}
converge to zero when $\ep\to 0$, as required.
\end{proof}

\section{Proof of Theorem \ref{thm2}. }\label{sec:shape}

The following intuitive coupling enables to transfer the Averaging Principle for the family of processes $(R^\ep_t)$ as the scale parameter $\ep\to 0$ on finite time horizons, into a shape result for $(R^1_t)$ of scale $1$ as time $t\to\infty$.

\begin{lem}[coupling]\label{lem:coup}
Fix $\ep>0$. Under Assumption (I), with 
\begin{align*}
(R_0^\ep, x^\ep_0)\overset{d}{=}(\ep^{1/n}R^1_0, \ep^{1/n}x^1_0), 
\end{align*}
there exists a coupling such that
\begin{align}\label{coupling}
(R_t^\ep, x^\ep_t)=(\ep^{1/n}R^1_{t/\ep}, \ep^{1/n}x^1_{t/\ep}), \quad t\ge0.
\end{align}
\end{lem}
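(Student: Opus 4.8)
\emph{Proof idea.} The plan is to realize both processes on a single probability space by feeding one rate-one Poisson clock (and one stream of driving angles) into the construction \eqref{bump-loc}--\eqref{fast-component} of $(R^1_t,x^1_t)$, and then to \emph{define} $(R^\ep_t,x^\ep_t)$ from it by the explicit deterministic rescaling in \eqref{coupling}; the content of the lemma is then just the verification that the rescaled object is a bona fide copy of the Markov jump process with generator $\cL^\ep$ of \eqref{generator} started from the prescribed law.

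First I would record the two elementary scaling identities forced by Assumption (I). From \eqref{scaling-inv} and the definition \eqref{def:y}, for any $c>0$ and $(r,x)\in\cD(F)$,
\[
y_{cr,cx}=\omega_n\int_{\bS^{n-1}}(cr(\theta))^{n-1}F(cr,cx,\theta)\,d\sigma(\theta)=c^{n-1}y_{r,x}\,,
\]
and hence, substituting $c=\ep^{1/n}$ in \eqref{eta},
\[
\eta(\ep,\ep^{1/n}r,\ep^{1/n}x)=\ep^{1/n}\big(\ep^{(n-1)/n}y_{r,x}\big)^{-1/(n-1)}=y_{r,x}^{-1/(n-1)}=\eta(1,r,x)\,,
\]
so the approximate-identity scale is invariant along the intended rescaling, while the radial bump height scales as $\ep\,y_{\ep^{1/n}r,\ep^{1/n}x}^{-1}=\ep^{1/n}y_{r,x}^{-1}$. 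Assumption (I) also keeps $(\ep^{1/n}r,\ep^{1/n}x)$ in $\cD(F)$ whenever $(r,x)\in\cD(F)$, so the rescaled trajectory is a.s.\ well defined.

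Next I would carry out the coupling. Let $(R^1_t,x^1_t)_{t\ge0}$ be built as in \eqref{bump-loc}--\eqref{fast-component} with $\ep=1$ from a rate-one Poisson clock $\{T^1_i\}$ and driving angles $\{\xi_i\}$, put $(R^\ep_0,x^\ep_0):=(\ep^{1/n}R^1_0,\ep^{1/n}x^1_0)$ (which has the required marginal law), and set $(R^\ep_t,x^\ep_t):=(\ep^{1/n}R^1_{t/\ep},\ep^{1/n}x^1_{t/\ep})$. Both processes are frozen between jumps, the jump epochs of $t\mapsto(R^\ep_t,x^\ep_t)$ are $\{\ep T^1_i\}$, and a time-rescaling by $\ep$ turns a rate-one Poisson process into a rate-$\ep^{-1}$ one, matching the construction of the $\ep$-dynamics; so it suffices to check the jump transitions by induction on $i$. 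Assume \eqref{coupling} holds on $[0,(\ep T^1_i)^{-}]$; writing $t=\ep T^1_i$ and $s=T^1_i$, the scaling invariance \eqref{scaling-inv} gives $F(R^\ep_{t^-},x^\ep_{t^-},\cdot)=F(\ep^{1/n}R^1_{s^-},\ep^{1/n}x^1_{s^-},\cdot)=F(R^1_{s^-},x^1_{s^-},\cdot)$, so the angle $\xi_i$ (which by \eqref{bump-loc} for the unit process has conditional density $F(R^1_{s^-},x^1_{s^-},\cdot)$ with respect to $\sigma$) also has the correct conditional density $F(R^\ep_{t^-},x^\ep_{t^-},\cdot)$ for the $\ep$-process. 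Applying \eqref{slow-component} for the unit process, multiplying by $\ep^{1/n}$, and inserting the two scaling identities above produces precisely \eqref{slow-component} for $R^\ep$ at $t$; likewise $x^\ep_t=\ep^{1/n}x^1_s=\ep^{1/n}H(R^1_{s^-},\xi_i)=H(\ep^{1/n}R^1_{s^-},\xi_i)=H(R^\ep_{t^-},\xi_i)$ by \eqref{scaling-H}, which is \eqref{fast-component}. Thus \eqref{coupling} propagates across the jump and then remains valid until the next jump, completing the induction and showing $(R^\ep_t,x^\ep_t)$ is the $\cL^\ep$-process with the stated initial law.

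The argument is essentially bookkeeping; the only point demanding care is to confirm that \emph{every} ingredient---the Poisson rate, the conditional law of the driving angle, the radial bump height, and the spherical scale $\eta$---transforms consistently under $(r,x)\mapsto(\ep^{1/n}r,\ep^{1/n}x)$ together with the time change $t\mapsto t/\ep$, so that no leftover power of $\ep$ survives. I do not expect any genuine obstacle beyond this consistency check.
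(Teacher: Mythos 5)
Your proposal is correct and coincides in substance with the paper's proof: both couple the two processes through the same deterministic time-and-space rescaling, use the Poisson clock's scaling in time, record the identities $y_{cr,cx}=c^{n-1}y_{r,x}$ and $\eta(\ep,\ep^{1/n}r,\ep^{1/n}x)=\eta(1,r,x)$, and verify the jump transitions inductively via \eqref{scaling-inv} and \eqref{scaling-H}. The only (inessential) difference is the direction of construction—you build the rate-one process first and define the $\ep$-process by rescaling, whereas the paper starts from the $\ep$-clock and sets $T^1_i:=\ep^{-1}T^\ep_i$—but the coupling and every identity invoked are identical.
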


\begin{proof}
Let $\{T^\ep_i\}_{i\in\N}$ with $T_0^\ep=0$ denote the sequence of Poisson jump times of rate $\ep^{-1}$ used in constructing $( R_t^\ep, x^\ep_t)_{t\ge0}$, for some fixed $\ep>0$. Set $T^1_i:=\ep^{-1} T^\ep_i$, $i\in\N$. By scaling properties of exponential distribution, $\{T^1_i\}_{i\in\N}$ has the law of a sequence of Poisson arrival times of rate $1$, as such we construct $(R^1_t,x^1_t)_{t\ge0}$ using $\{T^1_i\}_{i\in\N}$, on the same probability space $(\Omega, \cF, \P)$ as $(R^\ep_t, x^\ep_t)_{t\ge0}$. 

Starting with $(R_0^\ep, x^\ep_0)=(\ep^{1/n}R^1_0, \ep^{1/n}x^1_0)$ on $(\Omega, \cF, \P)$, suppose we have succeeded in coupling $( R^\ep_s, x_s^\ep)$ with $(\ep^{1/n}R^1_{s/\ep},  \ep^{1/n}x^1_{s/\ep})$ as in \eqref{coupling} up to time $(T_i^\ep)^-$ for some $i\in\N$. Then by \eqref{scaling-inv}, for any $s\le (T^\ep_i)^-$ we have 
\begin{align*}
F( R_s^\ep,x^\ep_s,\cdot)&=F(\ep^{1/n} R_{s/\ep}^1,\ep^{1/n}x^1_{s/\ep},\cdot)=F( R_{s/\ep}^1,x_{s/\ep}^1,\cdot)\, ,\\
y_{ R_s^\ep,x_s^\ep}&=\omega_n\int R_s^\ep(z)^{n-1}F( R^\ep_s,x_s^\ep,z)d\sigma(z)\\
&= \ep^{\frac{n-1}{n}}\omega_n\int R^1_{s/\ep}(z)^{n-1} F( R^1_{s/\ep},x^1_{s/\ep},z)d\sigma(z)=\ep^{\frac{n-1}{n}} y_{ R^1_{s/\ep},x^1_{s/\ep}}\, ,\\
\eta(\ep, R_s^\ep, x^\ep_s)&=\frac{\ep^{1/n}}{y^{1/(n-1)}_{ R^\ep_s, x^\ep_s}}=y^{-1/(n-1)}_{ R^1_{s/\ep}, x^1_{s/\ep}}=\eta(1, R^1_{s/\ep}, x^1_{s/\ep})\, .
\end{align*}
The induction hypotheses and the construction \eqref{bump-loc}, \eqref{slow-component} yield at $t=T_i^\ep$, per $\theta$
\begin{align*}
 \xi^\ep_t&\overset{d}{\sim} F( R_{t^-}^\ep, x_{t^-}^\ep,\cdot)=F( R^1_{t^-/\ep}, x_{t^-/\ep}^1,\cdot) \overset{d}{\sim} \xi^1_{t/\ep}\, ,\\
 R_t^\ep(\theta)&= R_{t^-}^\ep(\theta)+\ep y^{-1}_{ R_{t^-}^\ep, x^\ep_{t^-}}g_{\eta(\ep, R^\ep_{t^-}, x^\ep_{t^-})}(\langle \xi^\ep_t, \theta\rangle)\\
 &=\ep^{1/n} R_{t^-/\ep}^1(\theta)+\ep^{1/n}y^{-1}_{ R^1_{t^-/\ep}, x^1_{t^-/\ep}}g_{\eta(1, R^1_{t^-/\ep}\, , x^1_{t^-/\ep})}(\langle \xi_t^\ep, \theta\rangle)\, ,\\
R_{t/\ep}^1(\theta)&= R^1_{t^-/\ep}(\theta)+ y^{-1}_{ R^1_{t^-/\ep}, x^1_{t^-/\ep}}g_{\eta(1, R^1_{t^-/\ep}\, , \xi^1_{t/\ep})}(\langle \xi^1_{t/\ep}, \theta\rangle)\, .
\end{align*}
By coupling the jumps of $x^\ep_{t^-}$ and $x^1_{t^-/\ep}$ at $t=T^\ep_i$ such that $\xi^\ep_t=\xi^1_{t/\ep}$, we deduce $R^\ep_t=\ep^{1/n} R^1_{t/\ep}$, $\xi^\ep_t=\xi^1_{t/\ep}$ for $t=T^\ep_i$, and by \eqref{fast-component}, \eqref{scaling-H} also $x^\ep_t=\ep^{1/n}x^1_{t/\ep}$. During $t\in[T^\ep_i,T^\ep_{i+1})$, all processes stay put, hence continuing extends the coupling to all $t\ge0$. 
\end{proof}

\begin{proof}[Proof of Theorem \ref{thm2}.]
We only prove part (b), whereas the proof of part (a) is similar. By Theorem \ref{thm1} and Lemma \ref{lem:coup}, we firstly have for any $t<\infty$ and $\iota>0$,
\begin{equation}\label{fixed-t-conv}
\begin{aligned}
&\lim_{\ep\to0}\P(\|\ep^{1/n}R^1_{t/\ep}-r_t\|_2>\iota/2\, |\, R^1_0=\ep^{-1/n}r_0) \\
&=\lim_{\ep\to0}\P(\|R^\ep_t-r_t\|_2>\iota/2\, |\, R^\ep_0=r_0)=0,   
\end{aligned}
\end{equation}
where $(r_t)_{t\ge 0}$ is the continuous solution of \eqref{b-bar} with initial data $r_0\in\cC\cap C_+(\bS^{n-1})$, and $Leb(r_t)=Leb(r_0)+t$. By the triangle inequality, we have that
\begin{align*}
\lim_{\ep\to0}\P&\Big(\left \|(Leb(r_0)+t)^{-1/n}\ep^{1/n}R^1_{t/\ep}-\psi\right \|_2>\iota\, \Big|\, R^1_0=\ep^{-1/n}r_0\Big)\\
\le \lim_{\ep\to0}&\P\Big(\left \|(Leb(r_0)+t)^{-1/n}(\ep^{1/n}R^1_{t/\ep}-r_t)\right \|_2>\iota/2\, \Big|\, R^1_0=\ep^{-1/n}r_0\Big)    \\
&+\mathbf{1}\left\{\left \|(Leb(r_0)+t)^{-1/n}r_t-\psi\right \|_2>\iota/2\right\} \, .
\end{align*}
%where the second term is non-random. 
By \eqref{fixed-t-conv}, the first term vanishes for any $t\ge 1$, and upon taking another limit as $t\to\infty$, the second term vanishes as well by \eqref{cond:attr}. We obtain the claims upon setting $N=\ep^{-1}$.
\end{proof}

\begin{problem}
It remains open to remove the strict positivity of initial condition in Theorem \ref{thm1}, hence to be able to take $c=0$ in \eqref{shape-thm-1-a}, which would correspond to a genuine shape theorem. 
\end{problem}

We have the following general characterization of invariant shapes.

\begin{ppn}\label{inverse}
Under Assumption (I), $\psi\in C_+(\bS^{n-1})$ is invariant for the \abbr{ODE} \eqref{b-bar} if and only if $\ovl{b}(\psi)=n^{-1}\psi/Leb(\psi)$.
\end{ppn}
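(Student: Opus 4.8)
The plan is to exploit the scaling symmetry of Assumption (I) to reduce the invariance property of $\psi$ to the stated pointwise identity.

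\textbf{Step 1 (homogeneity of $\ovl b$).} First I would establish that $\ovl b(cr)=c^{1-n}\ovl b(r)$ for every $c>0$ and $r\in C_+(\bS^{n-1})$. Indeed, \eqref{scaling-inv} gives $F(cr,cx,\cdot)=F(r,x,\cdot)$, whence the normalizer \eqref{def:y} obeys $y_{cr,cx}=c^{n-1}y_{r,x}$, and therefore $b(cr,cx)=c^{1-n}b(r,x)$ pointwise on $\bS^{n-1}$. To carry this through the averaging measure, observe that the frozen generator \eqref{frozen-dyn} is covariant under the dilation $x\mapsto cx$: using both \eqref{scaling-inv} and \eqref{scaling-H}, for $\mathsf g(\cdot):=\mathsf f(c\,\cdot)$ one gets $(\cL^{1,cr}\mathsf f)(cx)=(\cL^{1,r}\mathsf g)(x)$, so the image of $\nu_r$ under $x\mapsto cx$ is a stationary measure for the frozen process in domain $cr$; uniqueness (Assumption (E), applied to the domain $cr$) identifies it with $\nu_{cr}$. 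Integrating the identity $b(cr,cx)=c^{1-n}b(r,x)$ against $\nu_r$ then yields the claimed homogeneity of $\ovl b$.

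\textbf{Step 2 (necessity).} Suppose $\psi$ is invariant, i.e.\ $r_t=c(t)\psi$ with $c(t):=(1+t/Leb(\psi))^{1/n}$ solves \eqref{b-bar} from $r_0=\psi$. Since $\ovl b$ is Lipschitz by \eqref{lip-bar}, the map $s\mapsto\ovl b(r_s)$ is continuous, so $r_t$ is $C^1$ with $\dot r_t=\ovl b(r_t)$; evaluating at $t=0$, where $c(0)=1$ and $\dot c(0)=1/(nLeb(\psi))$, gives $\ovl b(\psi)=\dot c(0)\,\psi=n^{-1}\psi/Leb(\psi)$.

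\textbf{Step 3 (sufficiency).} Conversely, assume $\ovl b(\psi)=n^{-1}\psi/Leb(\psi)$ and set $r_t:=c(t)\psi\in C_+(\bS^{n-1})$. Using Step 1 and the elementary identity $\int_0^t c(s)^{1-n}\,ds=nLeb(\psi)\,(c(t)-1)$, one checks that $\psi+\int_0^t\ovl b(r_s)\,ds=\psi+(c(t)-1)\psi=r_t$, so $(r_t)_{t\ge0}$ is a $C_+(\bS^{n-1})$-solution of \eqref{b-bar} started at $\psi$. By the uniqueness part of Proposition \ref{ppn:implic}(b) it is \emph{the} solution, so $\psi$ is invariant by definition. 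I expect the only delicate step to be the covariance argument in Step 1: one must check that the dilated measure is genuinely stationary for $\cL^{1,cr}$ on the appropriate class of test functions and that it is supported in the region where $F(cr,\cdot,\cdot)$ is defined — which holds because the standing domain hypotheses on $(\cD(F),H)$ are themselves invariant under $r\mapsto cr$ via \eqref{scaling-inv}--\eqref{scaling-H} — before invoking the uniqueness in Assumption (E). Everything else is bookkeeping with the explicit scalar $c(t)$.
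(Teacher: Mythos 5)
Your proof is correct and follows essentially the same route as the paper's: establish the homogeneity $\ovl b(cr)=c^{1-n}\ovl b(r)$ via the scaling of $y_{r,x}$, $b$, and the coupling $x_t^{1,cr}=cx_t^{1,r}$, then read off the pointwise identity by differentiating the \abbr{ODE}. You spell out the "if" direction (which the paper leaves as "checked directly") and note that it relies on uniqueness from Proposition \ref{ppn:implic}(b); these are welcome clarifications, not a different method.
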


\begin{proof}
We prove the ``only if" part, while the converse ``if" direction can be checked directly.
Assumption (I) implies that for any $c>0$, $y_{cr, cx}=c^{n-1}y_{r,x}$ and hence $b(cr,cx)=c^{-(n-1)}b(r,x)$. From \eqref{frozen-dyn} the value of 
$(\cL^{1,cr} (\mathsf{f} \circ c^{-1})) (cx)$ is independent of $c$. The natural coupling
$x_t^{1,cr}=cx_t^{1,r}$ implies that $\nu_{cr}(\cdot)$ of Assumption (E) are merely
the push-back of $\nu_r(\cdot)$ under scaling $x \mapsto cx$, resulting with
$\ovl{b}(cr)=c^{-(n-1)}\ovl{b}(r)$. Per Definition \ref{def:inv-sol}(a), an invariant solution $( r_t )_{t\ge 0}$ starting at $ r_0 =\psi$ is such that $ r_t =c_t\psi$ with $(c_t^n-1)Leb(\psi)=t$. From the \abbr{ODE} \eqref{b-bar} it is not hard to infer that $\ovl{b}(\psi)\propto\psi$. Further, by taking derivative of \eqref{b-bar} in $t$ we identify the proportionality constant to be $n^{-1}/Leb(\psi)$. 
\end{proof}

However, invariant shapes may not be unique.
\begin{ex}\label{non-uniq}
Consider $H\equiv 0$ (the origin) and $F(r,0,\cdot)=r(\cdot)/\int r d\sigma\in C_+(\bS^{n-1})$. Then it is easy to check that 
\begin{align*}
\bar{b}(r)=b(r,0)=F(r, 0, \cdot)/\int r^{n-1}F(r, 0, \cdot)d\sigma= n^{-1}r/Leb(r). 
\end{align*}
Since this choice of $F$ and $H$ satisfies Assumption (I), by Proposition \ref{inverse}, any $r\in C_+(\bS^{n-1})$ is invariant for \eqref{b-bar}, and not attractive except when starting from itself.
\end{ex}

We provide sufficient condition for the centered Euclidean ball $\B$ to be attractive for \eqref{b-bar}, where we denote henceforth by $\B$ the constant $1$ function on $\bS^{n-1}$. Unfortunately, the condition \eqref{suff-attr} is rather hard to check.

\begin{ppn}\label{ppn:attr-sol}
Suppose the \abbr{ODE} \eqref{b-bar} has $C^1(\bS^{n-1})$-solution $(r_t)_{t\ge0}$ for any  $r_0\in\cC\subset C^1(\bS^{n-1})$, and that for any $r\in C^1(\bS^{n-1})$, it holds 
\begin{align}\label{suff-attr}
\ovl{b}(r)(\argmax_\theta r)\le \ovl{b}(r)(\argmin_\theta r)\, .
\end{align}
Then $\B$ is attractive for \eqref{b-bar} for the collection $\cC$ of initial data.
\end{ppn}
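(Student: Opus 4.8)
The plan is to use the linear volume growth to normalize the radius and then to show that the normalized profile converges uniformly to the constant $c_{\B}\B$, where $c_{\B}:=Leb(\B)^{-1/n}=(n/\omega_n)^{1/n}$, by proving that the oscillation of $r_t$ stays \emph{bounded} while the normalizing factor $(Leb(r_0)+t)^{-1/n}$ tends to $0$. Throughout, recall from Proposition \ref{vol-growth} that $Leb(r_t)=Leb(r_0)+t$; set $\wh r_t:=(Leb(r_0)+t)^{-1/n}r_t$ (so that $Leb(\wh r_t)=1$ for all $t\ge0$), and write $M_t:=\max_\theta r_t(\theta)$, $m_t:=\min_\theta r_t(\theta)$, $\wh M_t:=(Leb(r_0)+t)^{-1/n}M_t$, $\wh m_t:=(Leb(r_0)+t)^{-1/n}m_t$. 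Since $1=Leb(\wh r_t)=\tfrac1n\int_{\bS^{n-1}}\wh r_t(\theta)^n\,d\sigma(\theta)$ lies between $\tfrac{\omega_n}{n}\wh m_t^{\,n}$ and $\tfrac{\omega_n}{n}\wh M_t^{\,n}$, we get $\wh m_t\le c_{\B}\le\wh M_t$ for every $t\ge0$. As $\wh r_t(\theta)\in[\wh m_t,\wh M_t]$ and $c_{\B}\in[\wh m_t,\wh M_t]$, this gives $\|\wh r_t-c_{\B}\B\|_\infty\le\wh M_t-\wh m_t$, and since $\|\cdot\|_2\le\omega_n^{1/2}\|\cdot\|_\infty$ on $\bS^{n-1}$, the conclusion \eqref{cond:attr} (which for $\psi=\B$ reads $\|\wh r_t-c_{\B}\B\|_2\to0$) will follow once we prove
\[
\wh M_t-\wh m_t\to0\quad\text{as }t\to\infty
\]
(which additionally forces $\wh M_t,\wh m_t\to c_{\B}$, since $\wh m_t\le c_{\B}\le\wh M_t$).

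First I would show that the \emph{unnormalized} oscillation $t\mapsto M_t-m_t$ is non-increasing. Since $(r_t)_{t\ge0}$ is a $C^1(\bS^{n-1})$-valued solution of \eqref{b-bar}, the map $(t,\theta)\mapsto\dot r_t(\theta)=\ovl b(r_t)(\theta)$ is continuous on $[0,T]\times\bS^{n-1}$ for each $T<\infty$, so $t\mapsto M_t$ and $t\mapsto m_t$ are locally Lipschitz, and a Danskin/envelope computation for the max (resp.\ min) of a $C^1$ family over a compact set yields, at a.e.\ $t$,
\[
\dot M_t=\max\{\ovl b(r_t)(\theta_*):\theta_*\in\argmax_\theta r_t\},\qquad
\dot m_t=\min\{\ovl b(r_t)(\theta_*):\theta_*\in\argmin_\theta r_t\}.
\]
Reading hypothesis \eqref{suff-attr} as $\ovl b(r)(\theta_*)\le\ovl b(r)(\theta_{**})$ for all $\theta_*\in\argmax_\theta r$ and all $\theta_{**}\in\argmin_\theta r$, this is exactly $\dot M_t\le\dot m_t$ for a.e.\ $t$; as $M_t-m_t$ is absolutely continuous, it is non-increasing, so $0\le M_t-m_t\le M_0-m_0<\infty$ for all $t\ge0$. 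Plugging this into the normalization, $\wh M_t-\wh m_t=(Leb(r_0)+t)^{-1/n}(M_t-m_t)\le(Leb(r_0)+t)^{-1/n}(M_0-m_0)\to0$, which by the first paragraph finishes the argument; since $r_0\in\cC$ was arbitrary, $\B$ is attractive for \eqref{b-bar} and the collection $\cC$.

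The one step needing genuine care is the envelope formula for $\dot M_t$ and $\dot m_t$, together with the precise reading of \eqref{suff-attr} when $r_t$ has non-unique extremizers: I would invoke (or prove via a short one-sided difference estimate) the standard fact that for $f(t,\theta)$ jointly continuous, $C^1$ in $t$ with $\partial_t f$ continuous, and $\theta$ ranging over a compact set, $t\mapsto\max_\theta f(t,\theta)$ is locally Lipschitz with one-sided derivative $\max\{\partial_t f(t,\theta_*):\theta_*\in\argmax_\theta f(t,\cdot)\}$, and dually for the min. Everything else is elementary bookkeeping, using only that $Leb(\wh r_t)\equiv1$ and $Leb(r_0)+t\to\infty$; in particular the monotonicity of the oscillation is the sole place where hypothesis \eqref{suff-attr} enters, and no positivity or growth estimate on $\ovl b$ beyond what is built into the existence of the $C^1$-solution is needed.
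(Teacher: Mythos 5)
Your argument is correct and follows the same route as the paper's proof: both establish that hypothesis \eqref{suff-attr} makes $t\mapsto\max_\theta r_t-\min_\theta r_t$ non-increasing and then divide by the normalization factor $(Leb(r_0)+t)^{1/n}\to\infty$ coming from Proposition \ref{vol-growth}. Two small execution differences are worth recording: you justify $\dot M_t\le\dot m_t$ via Danskin's envelope theorem for the max/min of a $C^1$ family, which is cleaner than the paper's chain-rule computation (the latter tacitly treats $\argmax_\theta r_t$ as differentiable in $t$, which can fail when extremizers are non-unique or jump); and instead of the paper's Gronwall-type differential inequality for $\mathsf{osc}(\mathbf{r}_t)$, you use the static bound $M_t-m_t\le M_0-m_0$ and divide, which gives exactly the same decay rate. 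You also make explicit the step the paper leaves tacit: the constant $c_{\B}$ is sandwiched between $\wh m_t$ and $\wh M_t$ because $Leb(\wh r_t)\equiv1$, which is precisely what lets one pass from vanishing oscillation to $\|\wh r_t-c_{\B}\B\|_\infty\to0$.
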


\begin{proof}
Set $\mathsf{osc}(r)=\max_\theta r(\theta)-\min_\theta r(\theta)$. Since $r_t$ is $C^1$ for all $t\ge 0$, we have %that
\begin{align*}
\frac{d}{dt}\{r_t(\argmax_\theta r_t)\}&=\ovl{b}(r_t)(\argmax_\theta r_t)+\frac{d}{dz}r_t(z)\Big|_{z=\argmax_\theta r_t}\cdot\frac{d}{dt}\{\argmax_\theta r_t\} \\
&=\ovl{b}(r_t)(\argmax_\theta r_t)\, ,
\end{align*}
and similarly for $\argmin_\theta r_t$. Therefore, combined with \eqref{suff-attr} we have that
\begin{align*}
\frac{d}{dt}\mathsf{osc}(r_t)=\ovl{b}(r_t)(\argmax_\theta r_t)-\ovl{b}(r_t)(\argmin_\theta r_t)\le 0.
\end{align*}
Set $\mathbf{r}_t:=r_t/(Leb(r_0)+t)^{1/n}$. Then we have that $\mathsf{osc}(\mathbf{r}_t)=\mathsf{osc}(r_t)/(Leb(r_0)+t)^{1/n}$ and
\begin{align*}
\frac{d}{dt}\mathsf{osc}(\mathbf{r}_t)&=\frac{1}{(Leb(r_0)+t)^{1/n}}\frac{d}{dt}\mathsf{osc}(r_t)-\frac{1}{n(Leb(r_0)+t)^{1+1/n}}\mathsf{osc}(r_t)\\
&\le -\frac{1}{n(Leb(r_0)+t)}\mathsf{osc}(\mathbf{r}_t).
\end{align*}
This yields 
\[
\mathsf{osc}(\mathbf{r}_t)\le \mathsf{osc}(\mathbf{r}_0)\left(\frac{Leb(r_0)+t}{Leb(r_0)}\right)^{-1/n}\to 0  \, , 
\]
as $t\to\infty$, for any $r_0\in\cC$. Equivalently, for some constant $c_n$ such that $Leb(c_n\B)=1$,
\[
||\mathbf{r}_t-c_n\B||_2\le \omega_n^{1/2}||\mathbf{r}_t-c_n\B||_\infty \to 0.
\]
This is exactly the definition \eqref{cond:attr} of attractive shapes with $\psi=\B$. 
\end{proof}

\section{Applications: Proof of Theorem \ref{harm:measure}}
\label{sec:app}

We consider the two applications of Theorems \ref{thm1} and \ref{thm2}, 
introduced previously in Theorem \ref{harm:measure}, with the main one being a simplified model for the growth of the range of \abbr{OERW} (with $F(r,x,\cdot)$ the density of the harmonic measure). Indeed, our choice of \eqref{def-conv-H} is motivated by basic features of \abbr{ORRW} and \abbr{OERW} in the mesoscopic scale. The ideal choice of $H$ to be closer to these models would be 
$H(r,z, \ep)=(r(z)-\ep^{1/n})z$, with \eqref{def-conv-H} an independent of $\ep$,
rule of the same type. Similarly, our choice of $F$ for \eqref{def-conv-F} corresponds to taking a 
simplified continuous model, where the random walk is replaced by a Brownian motion (see the
probabilistic interpretation provided in Section \ref{smooth-harmonic}).
An advantage of using such continuous model is that our proofs work verbatim 
when instead of Brownian motion, the particle follows an elliptic diffusion whose 
generator is a uniformly elliptic second-order divergence form operator $\cL=-\text{div}A\nabla$ (so
the Green's function used in the definition \eqref{def-conv-F} be the one for $\cL$). Indeed, 
recall
Dahlberg's theorem \cite[Theorem 3 and remark]{Da}, that for a Lipschitz domain $D\subset\R^n$, harmonic measures from any point $x\in D$ are mutually absolutely continuous with respect to the $(n-1)$-dimensional Hausdorff measure on $\partial D$, hence their Radon-Nikodym derivative which is the Poisson kernel $P(D,x,\cdot)$ exists and belongs to $L^2_{\text{loc}}(\partial D)$. If the domain is more regular, so is the Poisson kernel.
Per \cite[page 547]{Je}, if $\partial D$ belongs to H\"older space $C^{k+1,\gamma}$ for some $k\in\N, \gamma\in(0,1)$, then $P(D,x,\cdot)\in C^{k,\gamma}(\partial D)$. Since our domains are star-shaped, by an abuse of terminology we will call $F(r,x,\cdot)$ the Poisson kernel of $r$, if it is a probability density on $\bS^{n-1}$ corresponding to $P(D,x,\cdot)$ with $r=\partial D$ up to a change of variables.

As for the reasoning behind the regularization $r \mapsto \wt{r}= r \star g$ in our applications, note that
even for smooth domains, one cannot expect their Poisson kernel to be Lipschitz in $L^2$-norm with respect to boundary perturbations as \eqref{lip-F}, or in any other norm. Indeed, as explained in \cite{Je}, one expects the regularity of $P(D,x,\cdot)$ to be one derivative order less than that of the domain $D$. However, if one forms the kernel based on a regularized domain, then the Lipschitz property can be true (as shown below in Proposition \ref{ppn:local-lip}). Though to  a lesser degree, similar issue 
arises also in the more explicit hitting rule $F$ of \eqref{rule:distance}, where regularization is 
still the key to verifying Assumption (C) via condition \eqref{F-ubd-Lip}.

Recall the $C^k(\Omega)$ and $C^{k,\gamma}(\Omega)$, $\gamma \in (0,1]$
norms of functions, taking in this article only $\Omega=\bS^{n-1}$
% the unit sphere, 
or $\Omega=\ovl D$ (the closure of a bounded domain $D \subset\R^n$), and using $C(\Omega)$ for $C^0(\Omega)$.
Letting $\partial^\alpha u$ for multi-index $\alpha$ denote any $|\alpha|$-th order 
derivative of $u$,  we equip the collection of 
$k$-times continuously differentiable functions, with the norm
\begin{align*}
||u||_{C^k(\Omega)}:=\sum_{|\alpha|\le k}\sup_{x\in \Omega}|\partial^\alpha u(x)|.
\end{align*}
Further denoting the $\gamma$-H\"older semi-norm of a function $w$ by
\begin{align*}
[w]_{C^{0,\gamma}(\Omega)}:=\sup_{x\neq y\in \Omega}\frac{|w(x)-w(y)|}{\mathsf d(x,y)^\gamma}\, ,
\end{align*}
if finite, where $\mathsf d(\cdot, \cdot)$ is the geodesic distance when $\Omega=\bS^{n-1}$, and the Euclidean distance when $\Omega=\ovl D$, we define the $C^{k, \gamma}(\Omega)$-H\"older norm 
of any $u \in C^k(\Omega)$, by
\begin{align*}
||u||_{C^{k, \gamma}(\Omega)}:=||u||_{C^k(\Omega)}+\sum_{|\alpha|=k}[\partial^\alpha u]_{C^{0, \gamma}(\Omega)}\, .
\end{align*}% precise defn for the case of sphere will involve some Riemannian geometry, cf. Aubin's book
We proceed to verify the conditions needed for
Theorems \ref{thm1} and \ref{thm2}, starting
with the following Lipschitz control on the regularization map $r \mapsto \wt{r}$, the proof of which is deferred to the appendix.
\begin{lem}\label{molify}
For any $r,r'\in L^2(\bS^{n-1})$, we have that
\begin{align}\label{C3L2}
\|\wt{r}-\wt{r}'\|_{C^3(\bS^{n-1})}\le C\|r-r'\|_2 \, ,
\end{align}
for some $C=C(g)<\infty$ that depends only on the convolution kernel $g$. In particular, for any
$a \in (0,1)$ there exists $\delta=\delta(a,\alpha,g) > 0$ so the image of $\cAe_2(a)$ under  
$(r,x) \mapsto (\wt{r},x)$ is within the set
\begin{equation}\label{def:K-a-c-delta}
 \cK_{a,C,\delta} :=
\{ (\wt{r},x) :  \, \inf_\theta \{ \wt{r}(\theta)\} \ge a \,, \; 
\| \wt{r} \|_{C^3(\bS^{n-1})} \le C a^{-1} \,, \, \ovl{\B}(x,5\delta)  \subseteq   D_{\wt{r}} \} \,,
\end{equation}
where $\B(x,\delta)$ denotes the open Euclidean ball of radius $\delta$ centered at $x \in \R^n$ 
and $D_{\wt{r}}$ denotes the domain of boundary $\wt{r}$.
\end{lem}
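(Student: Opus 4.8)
The plan is to first obtain the $C^3$-estimate \eqref{C3L2} by differentiating the convolution under the integral sign, and then to read off the three defining properties of $\cK_{a,C,\delta}$ for the pair $(\wt r,x)$ whenever $(r,x)\in\cAe_2(a)$, the only genuinely nontrivial one being the ball inclusion $\ovl{\B}(x,5\delta)\subseteq D_{\wt r}$.

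For \eqref{C3L2}, by linearity of $r\mapsto\wt r=r\star g$ it suffices to show $\|\wt r\|_{C^3(\bS^{n-1})}\le C(g)\|r\|_2$. Fixing a finite atlas of charts $u\mapsto z(u)$ on $\bS^{n-1}$, we have $\wt r(z(u))=\omega_n^{-1}\int_{\bS^{n-1}}r(\theta)\,g(\langle z(u),\theta\rangle)\,d\sigma(\theta)$, and for every multi-index $|\alpha|\le3$ the quantity $\partial_u^\alpha[g(\langle z(u),\theta\rangle)]$ is a finite sum of terms $g^{(j)}(\langle z(u),\theta\rangle)\prod_i\langle\partial_u^{\beta_i}z(u),\theta\rangle$ with $0\le j\le3$. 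Since $|z(u)|\equiv1$ (hence $\langle z(u),\theta\rangle\in[-1,1]$), the chart derivatives of $z$ are bounded on the chart, and $g=g_\eta$ is $C^3$ on $[-1,1]$, all these kernels are bounded uniformly in $\theta$ by a constant depending only on $g$. Differentiating under the integral sign (legitimate as the kernel derivatives are bounded and $r\in L^1(\bS^{n-1})$) and applying Cauchy--Schwarz on $\bS^{n-1}$ then gives $|\partial_u^\alpha\wt r(z(u))|\le C(g)\|r\|_2$; summing over $|\alpha|\le 3$ and over the finitely many charts proves \eqref{C3L2}.

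Next, let $(r,x)\in\cAe_2(a)$; then $r\in\cA_2(a)$ satisfies $\inf_\theta r(\theta)\ge a$ and $\|r\|_2\le a^{-1}$, while $x=H(r',z_0)=\alpha(\wt{r'}(z_0),z_0)z_0$ for some $r'\in\cA_2(a)$ with $r'\le r$ and some $z_0\in\bS^{n-1}$, by \eqref{def-conv-H} and the definition of $\cAe_2(a)$. The bound $\|\wt r\|_{C^3(\bS^{n-1})}\le Ca^{-1}$ is immediate from \eqref{C3L2} (take $r'\equiv0$), and since $g\ge0$ with $1\star g=1$, $\wt r(z)$ is a weighted average of the values of $r$, so that $\inf_\theta\wt r(\theta)\ge\inf_\theta r(\theta)\ge a$ and, by the same monotonicity, $\wt{r'}\le\wt r$. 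To get a uniform $\delta$ for the ball inclusion, note that $\wt{r'}(z_0)\in[a,Ca^{-1}]$ (lower bound as above, upper bound from $\|\wt{r'}\|_\infty\le\|\wt{r'}\|_{C^3}\le Ca^{-1}$), and that $(\ell,z)\mapsto\ell-\alpha(\ell,z)$ is continuous and strictly positive on the compact set $[a,Ca^{-1}]\times\bS^{n-1}$ by \eqref{alpha:bdd}, hence bounded below there by some $\delta_0=\delta_0(a,\alpha,g)>0$; consequently the radial gap at $x$ satisfies $\wt r(z_0)-|x|\ge\wt{r'}(z_0)-\alpha(\wt{r'}(z_0),z_0)\ge\delta_0$.

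It then remains to convert this one-directional radial gap into a uniform lower bound for $\mathrm{dist}(x,\partial D_{\wt r})$, where $\partial D_{\wt r}=\{\wt r(\omega)\omega:\omega\in\bS^{n-1}\}$ and $\|\nabla_{\bS}\wt r\|_\infty\le\|\wt r\|_{C^3}\le Ca^{-1}=:L$. If $|x|<a/2$ then $\ovl{\B}(x,5\delta)\subseteq\B(0,a)\subseteq D_{\wt r}$ once $5\delta<a/2$. Otherwise, for any boundary point $p=\wt r(\omega)\omega$ at geodesic angle $\gamma$ from $z_0$, one has $|x-p|\ge\langle p-x,\omega\rangle=\wt r(\omega)-|x|\cos\gamma\ge\wt r(z_0)-L\gamma-|x|\ge\delta_0-L\gamma$, which is $\ge\delta_0/2$ for $\gamma\le\delta_0/(2L)$, while for larger $\gamma$ the law of cosines gives $|x-p|^2\ge2|x|\,\wt r(\omega)\,(1-\cos\gamma)\ge\tfrac{a^2}{2}\big(1-\cos\min\{\delta_0/(2L),\pi/2\}\big)$. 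Taking $\delta$ a small fixed fraction of $\min\{a,\delta_0,a\sqrt{1-\cos\min\{\delta_0/(2L),\pi/2\}}\}$ yields $\ovl{\B}(x,5\delta)\subseteq D_{\wt r}$ and completes the argument. I expect this last geometric passage --- extracting a uniform interior-distance estimate at $x$ from a single-direction radial gap together with the $C^3$ control of an otherwise arbitrary star-shaped boundary --- to be the main obstacle; the $C^3$-bound itself and the remaining verifications are routine convolution and compactness bookkeeping.
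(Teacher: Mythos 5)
Your proof is correct and follows essentially the same route as the paper: the $C^3$-bound via differentiating under the integral sign and Cauchy--Schwarz, the preservation of the lower bound $\inf_\theta \wt r\ge a$ from $g\ge 0$ and $1\star g=1$, and the uniform radial gap $\wt r(z_0)-|x|\ge\delta_0$ obtained from compactness of $[a,Ca^{-1}]\times\bS^{n-1}$ together with the strict positivity of $\ell-\alpha(\ell,z)$ per \eqref{alpha:bdd}. Your explicit case split and law-of-cosines computation converting the one-directional radial gap into the ball inclusion $\ovl{\B}(x,5\delta)\subseteq D_{\wt r}$ merely fills in (using only Lipschitz control of $\wt r$) the step the paper compresses into ``the uniform $C^3$ control on $\wt r$ implies a uniform Hessian bound and hence the ball inclusion.''
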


Equipped with Lemma \ref{molify}, we prove 
%the Lipschitz 
property \eqref{lip-H} for $H$ of \eqref{def-conv-H}.
\begin{ppn}\label{ppn:lip-H}
For every $a\in(0,1)$, the map $(r,z)\mapsto H(r,z)$ is Lipschitz from $(\cA_2(a)\times \bS^{n-1}, 
\|\cdot\|_2\times|\cdot|)$ to $\R^n$.
\end{ppn}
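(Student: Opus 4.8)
The plan is to reduce the claim to the Lipschitz continuity of the map $(\ell,z)\mapsto \alpha(\ell,z)z$ on compact sets, combined with the Lipschitz control on $r\mapsto\wt r$ from Lemma \ref{molify}. Recall that $H(r,z)=\alpha(\wt r(z),z)z$, so $H$ is the composition of three operations: first $(r,z)\mapsto(\wt r,z)$, then $(\wt r,z)\mapsto(\wt r(z),z)$, and finally $(\ell,z)\mapsto\alpha(\ell,z)z$. The first map is Lipschitz from $(\cA_2(a),\|\cdot\|_2)$ into $C^3(\bS^{n-1})$ (hence into $C^1$) by \eqref{C3L2}, uniformly over $\cA_2(a)$. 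I will track how the relevant variables stay in a fixed compact set as $r$ ranges over $\cA_2(a)$ and $z$ over $\bS^{n-1}$, so that ``locally Lipschitz'' hypotheses on $\alpha$ become genuine Lipschitz bounds.

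The key steps, in order, are as follows. First, fix $a\in(0,1)$ and invoke Lemma \ref{molify}: for $r\in\cA_2(a)$ one has $\inf_\theta\wt r(\theta)\ge a$ and $\|\wt r\|_{C^3(\bS^{n-1})}\le Ca^{-1}$, with $\|\wt r-\wt r'\|_{C^1(\bS^{n-1})}\le\|\wt r-\wt r'\|_{C^3(\bS^{n-1})}\le C\|r-r'\|_2$. Second, estimate $|\wt r(z)-\wt r'(z')|$: write it as $|\wt r(z)-\wt r(z')|+|\wt r(z')-\wt r'(z')|\le\|\wt r\|_{C^1}\mathsf d(z,z')+\|\wt r-\wt r'\|_\infty\le Ca^{-1}|z-z'|+C\|r-r'\|_2$, using that the geodesic distance on $\bS^{n-1}$ is comparable to the Euclidean distance. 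Thus $\ell:=\wt r(z)$ ranges over the compact interval $[a,L]$ with $L:=Ca^{-1}$, and $(\ell,z)\in[a,L]\times\bS^{n-1}=:\mathsf{K}$, a compact subset of $\R_{>0}\times\bS^{n-1}$. Third, since $\alpha$ is locally Lipschitz on $\R_{>0}\times\bS^{n-1}$, it is Lipschitz on the compact set $\mathsf{K}$, with some constant $L_\alpha=L_\alpha(a,\alpha)$, and it is also bounded there, say by $M_\alpha$. Fourth, write $H(r,z)-H(r',z')=\big(\alpha(\wt r(z),z)-\alpha(\wt r'(z'),z')\big)z+\alpha(\wt r'(z'),z')(z-z')$, so that
\begin{align*}
|H(r,z)-H(r',z')|&\le |\alpha(\wt r(z),z)-\alpha(\wt r'(z'),z')|+M_\alpha|z-z'|\\
&\le L_\alpha\big(|\wt r(z)-\wt r'(z')|+|z-z'|\big)+M_\alpha|z-z'|\,,
\end{align*}
and plug in the bound on $|\wt r(z)-\wt r'(z')|$ from Step 2 to conclude $|H(r,z)-H(r',z')|\le K(a)\big(\|r-r'\|_2+|z-z'|\big)$, which is exactly \eqref{lip-H} (and the asserted Lipschitz property) on $\cA_2(a)\times\bS^{n-1}$.

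I do not anticipate a serious obstacle here; the proposition is essentially bookkeeping once Lemma \ref{molify} is in hand. The one point requiring care is the passage from the ``locally Lipschitz'' hypothesis on $\alpha$ to a uniform constant: this is legitimate precisely because, uniformly over $r\in\cA_2(a)$ and $z\in\bS^{n-1}$, the pair $(\wt r(z),z)$ is confined to the fixed compact set $\mathsf{K}=[a,L]\times\bS^{n-1}$, which follows from the two-sided bound $a\le\wt r\le\|\wt r\|_{C^0}\le Ca^{-1}$ supplied by Lemma \ref{molify}. A minor additional point is the comparability of the geodesic and Euclidean metrics on $\bS^{n-1}$ (needed to turn the $C^1$ bound on $\wt r$ into an estimate in terms of $|z-z'|$), which is standard and uniform on the whole sphere.
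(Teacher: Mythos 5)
Your proof is correct and follows essentially the same route as the paper's: use Lemma \ref{molify} to get uniform $C^1$ control and Lipschitz dependence of $\wt r$ on $r\in\cA_2(a)$, observe that $(\wt r(z),z)$ is confined to a fixed compact on which the locally Lipschitz $\alpha$ has a uniform Lipschitz constant, and split $H(r,z)-H(r',z')$ into an $\alpha$-difference term plus a $(z-z')$ term. The only cosmetic difference is that you treat the $r$- and $z$-variations in one unified decomposition with a bound $M_\alpha$ for $\alpha$ on the compact, whereas the paper does the two variations separately and uses \eqref{alpha:bdd} ($\alpha(\ell,z)<\ell$) to bound $\alpha(\wt r(z),z)\le\wt r(z)$; both are fine.
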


\begin{proof} With $\alpha(\cdot,\cdot)$ Lipschitz on compacts, we have
by Lemma \ref{molify} that for $H$ of \eqref{def-conv-H},
some finite $\wt{C}=\wt{C}(a,\alpha)$ and all $r,r'\in\cA_2(a)$, 
\begin{align*}
|H(r,z)-H(r',z)| & =|\alpha(\wt r(z),z) -\alpha(\wt{r}'(z),z)|\\
&\le \wt{C} |\wt r(z) -\wt{r}'(z)|\le \wt{C} \, C(g) \|r-r'\|_2 \, .
\end{align*}
Also, for any $r \in \cA_2(a)$, $z,z'\in \bS^{n-1}$, 
\begin{align*}
|H(r,z)-H(r,z')|\le \wt{r}(z) |z-z'|+|\alpha(\wt{r}(z),z)-\alpha(\wt{r}(z'),z')| \,.
\end{align*}
The uniform on $\cA_2(a)$ control $\|\wt{r}\|_{C^3(\bS^{n-1})} \le C(g) a^{-1}$ 
from Lemma \ref{molify}, translates into the same control over the bounded Lipschitz
norm of $\wt{r}(\cdot)$. Hence, with 
$\alpha(\cdot,\cdot)$ Lipschitz on compacts, we deduce that $|H(r,z)-H(r,z')|\le C |z-z'|$ for some finite $C=C(a,\alpha,g)$ and all $r,z,z'$ as above.
\end{proof}

\subsection{Smoothed harmonic measure}\label{smooth-harmonic}
Recall the construction  above \eqref{def-conv-F} of the smoothed domain $\wt r\in C^3(\bS^{n-1})$ for every $r\in L^2(\bS^{n-1})$. Due to the preceding discussion on Poisson kernels, it is clear that the regularized (as in \eqref{def-conv-F}) Poisson kernel $F(r,x,\cdot)$ belongs to $C_+(\bS^{n-1})$ for any $(r,x)\in\cD(F)$, 
where the probabilistic meaning of the definitions of $F$ in \eqref{def-conv-F} and $H$  in 
\eqref{def-conv-H} is as follows.

If the process $(R^\ep,x^\ep)$ is defined up to time $s$ and the state at that time is given by domain with boundary $R^\ep_s$ and particle position $x^\ep_s$, we wait for the next jump mark, 
denoted by $t>s$, and given by an 
independent Exponential($\ep^{-1}$) random variable. To choose a point at the current boundary $R^\ep_s$, the particle follows the law of a Brownian motion in $\R^n$, starting at $x^\ep_s$ 
till its first exit from the smoothed domain $\wt R^\ep_s = R^\ep_s\star g$. 
We record its exit angle $\xi_t\in \bS^{n-1}$ and define the location for the center of the new bump on the {\it original} domain by $R^\ep_s(\xi_t)\xi_t$. Hence the updated domain is formed by
\[
R^\ep_t(\theta)=R^\ep_s(\theta) + \frac{\ep}{y_{R^\ep_s,x^\ep_s}}  g_{\eta(\ep, R^\ep_s,x^\ep_s)}(\langle \xi_{t}, \theta\rangle).
\]
Observe that the bump is added to the original domain and not the smoothed one. Next, the particle is pushed towards the origin by a strictly positive quantity, along the radius, still in the {\it smoothed} domain $\wt R^\ep_s$, namely $x^\ep_t=\alpha(\wt R^\ep_s(\xi_t), \xi_t)\xi_t$ and there it waits for the next jump mark. Continuing in this way we define the process at all times. Note that we have 
omitted the travel time of the Brownian motion inside the smoothed domain and only deal with its exit distribution.
%\green{which is without loss of generality.} 
%It is also important to notice that 
Further, since $R^\ep_t(\theta) \ge R^\ep_s(\theta)$, $t>s$ for all $\theta\in \bS^{n-1}$, necessarily also $\wt R^\ep_t(\theta) \ge \wt R^\ep_s(\theta)$, with the particle $(x^\ep_t)$ always contained in the smoothed domain, once we assume it is the case for $(\wt {R}_0^\ep, x^\ep_0)$. 
%That is, we assume $x_0^\ep$ is inside the domain determined by $\wt R_0^\ep$, which is not a restriction.

%\medskip

We postpone to the appendix the proof of 
the following key proposition about Lipschitz regularity for Poisson kernels in our regularized domains 
(where the precise value of $\gamma\in(0,1)$ is unimportant).
%, and for definiteness one can set from now on $\gamma=1/2$.
\begin{ppn}\label{ppn:local-lip}
For any $a \in  (0,1)$, $C<\infty$, $\delta>0$, the 
map $(\wt r,x) \mapsto F(r,x,\cdot)$ of \eqref{def-conv-F} is Lipschitz from 
the set $\cK_{a,C,\delta}$ of \eqref{def:K-a-c-delta}, equipped with
the $C^{2,\gamma}(\bS^{n-1})$-norm for the first variable and the Euclidean norm for the second one, 
to $C^1 (\bS^{n-1})$. 
% with the $C^1(\bS^{n-1})$-metric
\end{ppn}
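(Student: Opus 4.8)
The plan is a domain–perturbation argument: transplant the Green's function of one domain onto the other and control the resulting error by global Schauder estimates, keeping all constants uniform over $\cK_{a,C,\delta}$. Since $F(r,x,\cdot)$ depends on $r$ only through $\wt r$, and since for boundaries in $\cK_{a,C,\delta}$ the Poisson kernels $F(r,x,\cdot)$ are uniformly bounded in $C^{1,\gamma}(\bS^{n-1})$ — by the classical boundary regularity of the Green's function on $C^{2,\gamma}$ domains of uniformly controlled geometry (\cite{Je}), the relevant Schauder constants depending only on $n$, $a$, $C$, $\delta$ and the ellipticity — it suffices to establish the Lipschitz bound when $\|\wt r-\wt r'\|_{C^{2,\gamma}(\bS^{n-1})}+|x-x'|\le\epsilon_0(n,a,C,\delta)$ for a sufficiently small threshold $\epsilon_0$; for larger increments the claim follows from the uniform bound. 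I would first dispose of the dependence on the source point: for fixed $\xi$, the map $x\mapsto\partial_{\mathbf n_y}G_{\wt r}(x,y)\big|_{y=\wt r(\xi)\xi}$ is harmonic in $x$ on $D_{\wt r}$ (as $\Delta_xG_{\wt r}(x,y)=0$ for $x\ne y$) and bounded there by the uniform Poisson-kernel bound, so interior gradient estimates on $\{x:\mathrm{dist}(x,\partial D_{\wt r})\ge 3\delta\}$ — applied also to the $\xi$-derivative, using the boundary Schauder bounds for $G_{\wt r}$ — give $\|F(r,x,\cdot)-F(r,x',\cdot)\|_{C^1(\bS^{n-1})}\le C_0|x-x'|$ uniformly. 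It remains to bound $\|F(r,x,\cdot)-F(r',x,\cdot)\|_{C^1(\bS^{n-1})}\le C_0\epsilon$ with $\epsilon:=\|\wt r-\wt r'\|_{C^{2,\gamma}(\bS^{n-1})}\le\epsilon_0$, for a common $x$ at distance $\ge 3\delta$ from both $\partial D_{\wt r}$ and $\partial D_{\wt r'}$ (which holds, with a harmless loss in $\delta$, in the small-perturbation regime, cf. \eqref{def:K-a-c-delta}).

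Next I would build a diffeomorphism $\Psi$ of $\R^n$ with $\Psi(D_{\wt r})=D_{\wt r'}$, $\Psi(\wt r(\xi)\xi)=\wt r'(\xi)\xi$ for all $\xi\in\bS^{n-1}$, $\Psi\equiv\mathrm{id}$ on $\B(x,2\delta)$, and $\|\Psi-\mathrm{id}\|_{C^{2,\gamma}(\R^n)}\le C_0\epsilon$ — explicitly, in polar coordinates $\Psi(\rho\xi):=\big(\rho+\chi_\xi(\rho)(\wt r'(\xi)-\wt r(\xi))\big)\xi$ with $\chi_\xi$ a smooth cutoff equal to $1$ on a radial collar of $\partial D_{\wt r}$ and vanishing on the complement of a slightly larger collar still disjoint from $\B(x,2\delta)$; the $C^3$ bounds on $\wt r,\wt r'$ built into $\cK_{a,C,\delta}$ make this, and the diffeomorphism property for $\epsilon_0$ small, routine. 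Set $v:=G_{\wt r'}(x,\cdot)$ and $u:=G_{\wt r}(x,\cdot)-v\circ\Psi$ on $D_{\wt r}$. Then $u=0$ on $\partial D_{\wt r}$; near the pole $\Psi=\mathrm{id}$, so $u=G_{\wt r}(x,\cdot)-G_{\wt r'}(x,\cdot)$ is harmonic there, in particular $u\in C^{2,\gamma}(\ovl D_{\wt r})$; and away from the pole, using $\Delta v=0$ and the chain rule, $\Delta u=-\Delta(v\circ\Psi)=-\sum_{i,j}(\partial_{ij}v)(\Psi)\big(\sum_k\partial_k\Psi^i\,\partial_k\Psi^j-\delta^{ij}\big)-\sum_i(\partial_iv)(\Psi)\,\Delta\Psi^i=:f$, supported in the collar, away from the pole. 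Since $D\Psi=I+O(\epsilon)$ and $D^2\Psi=O(\epsilon)$ in $C^{0,\gamma}$, while the first and second derivatives of $v$ are uniformly bounded on the collar (Schauder for $G_{\wt r'}$ up to $\partial D_{\wt r'}$, away from the pole, with uniform geometry), we get $\|f\|_{C^{0,\gamma}(\ovl D_{\wt r})}\le C_0\epsilon$. Global Schauder for the $C^{2,\gamma}$ domain $D_{\wt r}$ together with the maximum principle (through the uniformly bounded Green's function of $D_{\wt r}$) then yield $\|u\|_{C^{2,\gamma}(\ovl D_{\wt r})}\le C_0\big(\|f\|_{C^{0,\gamma}(\ovl D_{\wt r})}+\|u\|_{C^0(\ovl D_{\wt r})}\big)\le C_0\epsilon$, with $C_0=C_0(n,a,C,\delta)$.

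Finally I would convert this into the claimed estimate. At a boundary point $\wt r(\xi)\xi$, $F(r,x,\xi)=\partial_{\mathbf n_{\wt r}}u(\wt r(\xi)\xi)+\partial_{\mathbf n_{\wt r}}(v\circ\Psi)(\wt r(\xi)\xi)$. The first term has $C^{1,\gamma}(\partial D_{\wt r})$-norm $\le C_0\|u\|_{C^{2,\gamma}(\ovl D_{\wt r})}\le C_0\epsilon$, hence, precomposing with the uniformly $C^3$ parametrization $\xi\mapsto\wt r(\xi)\xi$, its $C^1(\bS^{n-1})$-norm is $\le C_0\epsilon$. For the second term, $v\equiv 0$ on $\partial D_{\wt r'}$ forces $\nabla v(\wt r'(\xi)\xi)=F(r',x,\xi)\,\mathbf n_{\wt r'}(\xi)$, so by the chain rule
\[
\partial_{\mathbf n_{\wt r}}(v\circ\Psi)(\wt r(\xi)\xi)=F(r',x,\xi)\,\big\langle D\Psi(\wt r(\xi)\xi)\,\mathbf n_{\wt r}(\xi),\,\mathbf n_{\wt r'}(\xi)\big\rangle=:F(r',x,\xi)\,(1+e(\xi)),
\]
where $e\equiv 0$ when $\wt r=\wt r'$ and, since $D\Psi$ and the two unit normals are built from $\wt r,\wt r'$ and their derivatives up to second order (all in a fixed compact range by the $C^3$ bounds), $\|e\|_{C^1(\bS^{n-1})}\le C_0\|\wt r-\wt r'\|_{C^2(\bS^{n-1})}\le C_0\epsilon$. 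As $F(r',x,\cdot)$ is uniformly bounded in $C^{1,\gamma}(\bS^{n-1})$, the product $F(r',x,\cdot)\,e(\cdot)$ has $C^1(\bS^{n-1})$-norm $\le C_0\epsilon$; combining the two terms gives $\|F(r,x,\cdot)-F(r',x,\cdot)\|_{C^1(\bS^{n-1})}\le C_0\epsilon$. The argument goes through verbatim with $-\Delta$ replaced by a uniformly elliptic $-\mathrm{div}\,A\nabla$, using Schauder estimates for divergence-form operators with sufficiently regular coefficients.

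I expect the principal difficulties to be: (i) keeping every elliptic estimate — global Schauder, the maximum-principle bound, and the boundary regularity of the Poisson kernel — uniform over $\cK_{a,C,\delta}$, which requires checking that the constants depend only on $n$, $a$, $C$, $\delta$ and the ellipticity; and (ii) the localization bookkeeping, i.e. arranging $\Psi\equiv\mathrm{id}$ near the pole so that the inhomogeneity $f$ never meets the non-integrable singularity of $\nabla^2 G$, together with the careful comparison of normal derivatives across the two moving boundaries in the last step.
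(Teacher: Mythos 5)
Your proposal is correct and follows essentially the same route as the paper's own proof: construct a diffeomorphism $\phi$ (your $\Psi$) carrying $D_{\wt r}$ onto $D_{\wt r'}$, fixing a ball around the pole, with $\|\phi-\mathrm{id}\|_{C^{2,\gamma}}\lesssim\|\wt r-\wt r'\|_{C^{2,\gamma}}$; transplant $G_{\wt r'}$ by $\phi$ and control the difference $G_{\wt r}-G_{\wt r'}\circ\phi$ via the elliptic equation it satisfies, global Schauder on $D_{\wt r}$, and the maximum principle; then pass to the inward normal derivative. Your chain-rule expansion of $\Delta(v\circ\Psi)$ and the paper's divergence-form identity $\Delta(G_{\wt r'}\circ\phi)=\operatorname{div}(\mathsf D\phi^{T}\nabla G_{\wt r'}(\phi))$ are the same computation written in two equivalent ways (the paper actually omits a transpose, which you in effect correct), and your explicit factor $\langle D\Psi\,\mathbf n_{\wt r},\mathbf n_{\wt r'}\rangle=1+e$ is a cleaner way of stating the paper's remark that the two inward normals at $y$ and $\phi(y)$ are close. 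The only organizational difference is that you reduce to small increments via the a priori uniform $C^{1,\gamma}$-bound on $F$, whereas the paper invokes Arzel\`a--Ascoli compactness of $\cK_{a,C,\delta}$ in $C^{2,\gamma}$ to upgrade local to global Lipschitz; both are fine, and you also treat the $x$-increment first by interior harmonic estimates rather than, as the paper does, by the mean value theorem at the end -- again an equivalent argument.
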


The map $(r,x)\mapsto F(r,x,\cdot)$ of \eqref{def-conv-F} is a composition of 
$(r,x) \mapsto (\wt r,x)$ and $(\wt r,x) \mapsto F(r,x, \cdot)$. The former map is
globally Lipschitz per Lemma \ref{molify}, whereas the latter is per Proposition \ref{ppn:local-lip}
globally Lipschitz on some $\cK_{a,C,\delta}$ that contains the 
image of $\cAe_2(a)$ under the first map, yielding the following corollary.
\begin{cor}\label{cor:F-lip}
For every $a\in(0,1)$, the map $(r,x) \mapsto F(r,x,\cdot)$ is globally Lipschitz from  $(\cAe_2 (a),
\|\cdot\|_2 \times|\cdot|)$ to $C^1(\bS^{n-1})$.
\end{cor}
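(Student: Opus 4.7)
The plan is to exploit the factorization $(r,x) \mapsto (\wt r, x) \mapsto F(r,x,\cdot)$ and simply compose the Lipschitz bounds supplied by Lemma \ref{molify} and Proposition \ref{ppn:local-lip}. Fix $a \in (0,1)$ and take arbitrary $(r,x), (r',x') \in \cAe_2(a)$. First I would invoke Lemma \ref{molify} to extract constants $C_0 = C_0(g)$ and $\delta = \delta(a,\alpha,g) > 0$ such that both $(\wt r, x)$ and $(\wt{r}', x')$ belong to the set $\cK_{a,C_0,\delta}$ of \eqref{def:K-a-c-delta}, and moreover
\begin{equation*}
\|\wt r - \wt{r}'\|_{C^{3}(\bS^{n-1})} \le C_0 \|r - r'\|_{2}.
\end{equation*}

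Next, for any $\gamma \in (0,1)$, the $C^{3}(\bS^{n-1})$-norm controls the $C^{2,\gamma}(\bS^{n-1})$-norm: indeed, the H\"older-$\gamma$ seminorm of second derivatives is dominated (on the compact sphere, using the mean value theorem along geodesics and $\gamma\le 1$) by the supremum of third derivatives, up to a universal constant. Hence there exists $C_1 = C_1(g) < \infty$ such that
\begin{equation*}
\|\wt r - \wt{r}'\|_{C^{2,\gamma}(\bS^{n-1})} \le C_1 \|r - r'\|_{2}.
\end{equation*}

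Then I would apply Proposition \ref{ppn:local-lip} directly to the pair $(\wt r, x), (\wt{r}', x') \in \cK_{a,C_0,\delta}$ to conclude
\begin{equation*}
\|F(r,x,\cdot) - F(r',x',\cdot)\|_{C^{1}(\bS^{n-1})} \;\le\; C_2\bigl(\|\wt r - \wt{r}'\|_{C^{2,\gamma}(\bS^{n-1})} + |x-x'|\bigr),
\end{equation*}
for a constant $C_2 = C_2(a, C_0, \delta)$ furnished by that proposition. Combining the last two displays produces a constant $C_3 = C_3(a,g,\alpha)$ for which
\begin{equation*}
\|F(r,x,\cdot) - F(r',x',\cdot)\|_{C^{1}(\bS^{n-1})} \;\le\; C_3\bigl(\|r-r'\|_{2} + |x-x'|\bigr),
\end{equation*}
uniformly over $(r,x),(r',x')\in\cAe_2(a)$, which is the claim.

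The argument is a pure composition; there is no substantive obstacle since the hard work is already encoded in Lemma \ref{molify} (regularity smoothing from $L^2$ to $C^3$) and Proposition \ref{ppn:local-lip} (stability of the Poisson kernel under $C^{2,\gamma}$ boundary perturbations, bounded below away from the particle by $\delta$). The only mildly delicate point worth flagging is the embedding $C^{3}(\bS^{n-1})\hookrightarrow C^{2,\gamma}(\bS^{n-1})$: one must confirm that Proposition \ref{ppn:local-lip}'s $C^{2,\gamma}$-input tolerates the $C^{3}$-output of Lemma \ref{molify}, which follows because $\gamma\in(0,1)$ and the sphere is a compact Riemannian manifold, so the embedding is continuous with norm depending only on $n$ and $\gamma$.
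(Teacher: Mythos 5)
Your argument is correct and is essentially the same as the paper's: the paper likewise observes that $(r,x)\mapsto F(r,x,\cdot)$ factors through $(r,x)\mapsto(\wt r,x)$, invokes Lemma \ref{molify} to place the image of $\cAe_2(a)$ inside $\cK_{a,C,\delta}$ with a $C^3$-Lipschitz bound, and then composes with Proposition \ref{ppn:local-lip}. The only point you spell out that the paper leaves implicit is the continuous embedding $C^3(\bS^{n-1})\hookrightarrow C^{2,\gamma}(\bS^{n-1})$, which is indeed needed so that the $C^3$ output of Lemma \ref{molify} feeds into the $C^{2,\gamma}$ hypothesis of Proposition \ref{ppn:local-lip}; your justification of that embedding is fine.
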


\begin{rmk}\label{verify-C} From Corollary \ref{cor:F-lip} we get that  
$\|F(r,x,\cdot)\|_{\rm Lip} \le C(a)$ for all $(r,x)\in\cAe_\infty(a)$,
hence Assumption (C) holds in our setting (see Proposition \ref{ppn:implic}(d)).
\end{rmk}

We next show that $F(\cdot)$ of \eqref{def-conv-F} is bounded above and below, uniformly in
$\cAe_2(a) \times \bS^{n-1}$, thereby verifying \eqref{F-ubd} and as explained 
in Remark \ref{st-remov}, allowing us also to dispense 
of the stopping time $\sigma^\ep(\delta)$ in Theorem \ref{thm1}.
\begin{ppn}\label{ex:erg}
For each $a\in (0,1)$ there exist  $c=c(a, \alpha, g)>0$ such that 
\begin{align}\label{prob-doblin}
c \le \inf_{\cAe_2(a)\times\bS^{n-1}} \{F(r,x,\theta)\} \le \sup_{\cAe_2(a)\times\bS^{n-1}} 
\{F(r,x,\theta)\} \le c^{-1} \, .
\end{align}
\end{ppn}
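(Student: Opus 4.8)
The plan is to reduce the two-sided bound on $F(r,x,\theta)$ to uniform two-sided Poisson-kernel estimates for the Laplacian on the smoothed domains $D_{\wt r}$, then to invoke the uniform geometric control on these domains supplied by Lemma \ref{molify}. First I would record that, by Lemma \ref{molify}, the image of $\cAe_2(a)$ under $(r,x)\mapsto(\wt r,x)$ sits inside the set $\cK_{a,C,\delta}$ of \eqref{def:K-a-c-delta} for suitable $C=C(g)$ and $\delta=\delta(a,\alpha,g)>0$; in particular all the domains $D_{\wt r}$ in play satisfy $\inf_\theta \wt r(\theta)\ge a$, $\|\wt r\|_{C^3(\bS^{n-1})}\le Ca^{-1}$, and the corresponding particle position $x$ lies at Euclidean distance at least $5\delta$ from $\partial D_{\wt r}$. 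These are uniform geometric constraints: the domains have a uniform interior and exterior ball condition (hence are uniformly $C^{1,1}$, in fact uniformly $C^{2,\gamma}$), their diameters are uniformly bounded above by $Ca^{-1}$ and below by $2a$, and the source $x$ is uniformly interior.

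Next, I would translate the definition \eqref{def-conv-F} of $F(r,x,\theta)$ — the inward normal derivative of the Green's function $G_{\wt r}(x,\cdot)$ at the boundary point $\wt r(\theta)\theta$ — into the Poisson kernel $P(D_{\wt r},x,\cdot)$ with respect to surface measure on $\partial D_{\wt r}$, up to the bounded-above-and-below change of variables relating $d\sigma$ on $\bS^{n-1}$ to surface measure on $\partial D_{\wt r}$ (this Jacobian is controlled by $\|\wt r\|_{C^1}$ and $\inf\wt r$, hence uniformly in $\cK_{a,C,\delta}$). For the Poisson kernel itself I would use the classical two-sided bounds for $C^{1,1}$ (or $C^{1,\text{Dini}}$) domains: there is a constant depending only on the dimension $n$, the $C^{1,1}$-character of the domain, its diameter, and $\operatorname{dist}(x,\partial D)$, such that
\[
c_1 \le P(D,x,y) \le c_2 \qquad \text{for all } y \in \partial D.
\]
Such estimates follow, e.g., from the boundary Harnack principle together with the explicit half-space Poisson kernel and a standard localization/comparison argument, or directly from Widman's and Krantz–type estimates; the upper bound is also immediate from Proposition \ref{ppn:local-lip}, which already gives $F(r,x,\cdot)\in C^1(\bS^{n-1})$ with norm bounded uniformly on $\cK_{a,C,\delta}$, so $\sup_\theta F(r,x,\theta)\le c^{-1}$ comes for free. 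The only real content is the lower bound $\inf_\theta F(r,x,\theta)\ge c>0$, i.e. that the harmonic measure from $x$ charges every boundary region comparably; this is where the uniform interior ball condition at every boundary point, the lower bound on $\operatorname{dist}(x,\partial D_{\wt r})$, and the upper bound on $\operatorname{diam}(D_{\wt r})$ all enter, via a chain (Harnack chain) argument connecting $x$ to a point near the boundary, combined with the standard lower bound for the harmonic measure of a boundary ball seen from a point at comparable distance.

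Finally, I would assemble these ingredients: since every geometric quantity controlling $c_1,c_2$ (the $C^{1,1}$-character, the diameter, and $\operatorname{dist}(x,\partial D_{\wt r})$) is bounded in terms of $a$, $\alpha$, $g$ only — uniformly over $(r,x)\in\cAe_2(a)$ by Lemma \ref{molify} — the resulting constants $c_1,c_2$ depend only on $n$ and on $(a,\alpha,g)$. Absorbing the uniformly bounded surface-measure Jacobian, this yields \eqref{prob-doblin} with a single $c=c(a,\alpha,g)>0$. I expect the main obstacle to be citing or establishing the uniform two-sided Poisson-kernel estimate with constants depending only on the $C^{1,1}$-character, diameter, and interior distance — in particular making the uniform interior ball radius for $D_{\wt r}$ explicit from the $C^3$-bound on $\wt r$ and the lower bound $\inf_\theta\wt r\ge a$; once that geometric input is in hand, the rest is a routine application of Harnack chains and classical boundary estimates.
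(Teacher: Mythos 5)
Your proposal is correct but takes a genuinely different, and more technical, route than the paper. You establish the lower bound quantitatively, via uniform two-sided Poisson-kernel estimates for uniformly $C^{1,1}$ domains together with Harnack chains connecting $x$ (which is $\ge 5\delta$ from $\partial D_{\wt r}$) to the boundary; all constants are then tracked through the geometric control of Lemma \ref{molify}. The paper instead runs a soft compactness argument: pointwise positivity of $F(r,x,\theta)$ follows from the Hopf lemma, joint continuity of $(\wt r,x,\theta)\mapsto F(r,x,\theta)$ on $\cK_{a,C,\delta}\times\bS^{n-1}$ follows from Proposition \ref{ppn:local-lip}, and then the two-sided bound \eqref{prob-doblin} is just the statement that a continuous strictly positive function on a compact set (compactness of $\cK_{a,C,\delta}$ in the $C^{2,\gamma}$-topology comes from Arzel\`a--Ascoli, as in the proof of Proposition \ref{ppn:local-lip}) attains a positive minimum and a finite maximum. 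The paper's argument is shorter and avoids having to cite or reprove boundary-Harnack/Poisson-kernel bounds, whereas yours is more quantitative and would yield explicit, tractable constants; both hinge on the same geometric input (Lemma \ref{molify}) and on Proposition \ref{ppn:local-lip}, which you use only for the upper bound and the paper uses for continuity as well. One small point: your remark that the upper bound is ``immediate from Proposition \ref{ppn:local-lip}'' is true, but only after combining the Lipschitz property with boundedness of $\cK_{a,C,\delta}$ (a Lipschitz map has bounded image on a bounded set) --- worth stating explicitly rather than leaving implicit.
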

\begin{proof}
For each $r \in \cA_2(a)$, the state space of $(x^{1,r}_t)_{t\ge0}$ is contained in the star-shaped domain enclosed by $\wt r\in C^3(\bS^{n-1})$. In such domains, with $x$ in the interior, the Poisson kernel $F(r,x,\cdot)$ is pointwise positive (a consequence of Hopf lemma). By Proposition \ref{ppn:local-lip},
$(\wt r, x)\mapsto F(r, x, \theta)$ is continuous per fixed $\theta$ with 
$\theta\mapsto F(r, x, \theta)$ continuous per fixed $(\wt r, x) \in \cK_{a,C,\delta}$. 
Thanks to the compactness of $\bS^{n-1}$,  the joint continuity of 
$(\wt r, x, \theta)\mapsto F(r, x, \theta)$ follows and
we get \eqref{prob-doblin} from Lemma \ref{molify} and 
the compactness of $\cK_{a,C,\delta} \times \bS^{n-1}$ under the 
$C^{2, \gamma}(\bS^{n-1})\times \R^n\times\bS^{n-1}$ norm 
(see proof of Proposition \ref{ppn:local-lip}).
\end{proof}
\begin{rmk}\label{verify-e}
Propostion \ref{ex:erg} further establishes Assumption (E) for our model. Indeed, the sufficient condition \eqref{minor} of Proposition \ref{ppn:implic}(c), 
then holds with $n_0=1$, $\delta=c$ and $m_r(\cdot)$ the push-forward under $H(r,\cdot)$ 
of the uniform measure on $\bS^{n-1}$. 
\end{rmk}

\begin{rmk}\label{rmk-b}
With $\omega_n^{-1/2}\|\cdot\|_2 \le \|\cdot\|_\infty \le \|\cdot\|_{C^{1}(\bS^{n-1})}$, hence also 
$\cAe_\infty(a) \subset \cAe_2(a)$, Corollary \ref{cor:F-lip} implies that 
\eqref{lip-F} holds for both $p=2$ and $p=\infty$. Recall Proposition \ref{ppn:lip-H} that 
\eqref{lip-H} holds here, and Proposition \ref{ex:erg} that so do both \eqref{F-ubd} and \eqref{F-lbd}.
Combined, these in turn yield by Proposition \ref{ppn:implic}(a) the last remaining 
Lipschitz condition required (namely, property \eqref{lip-bar} of $\ovl{b}$), 
and thereby complete the verification of Assumption (L) in our setting.
\end{rmk}

Having verified Assumptions (C), (E) and (L) (see Remarks \ref{verify-C}, \ref{verify-e} and 
\ref{rmk-b},  respectively),
we can apply Theorem \ref{thm1} to this model without the stopping time
$\sigma^\ep(\delta)$ of \eqref{pdf-lbd}. This amounts to proving
Theorem \ref{harm:measure}(a) for this model, whereas our next proposition, considering 
special cases where we have explicit descriptions, constitutes the proof 
of Theorem \ref{harm:measure}(b) in this setting.
\begin{ppn}\label{ppn:poisson-ker} $~$ \\
(a) If the function $\alpha(\ell,z)=\alpha(\ell)$ does not separately depend on $z$, then the centered Euclidean ball $\B$ is an invariant solution to \eqref{b-bar}. \\
(b) If the function $\alpha(\ell,z)=\alpha(z)\ell$ depends linearly on $\ell$, then Assumption (I) is satisfied.\\
(c) If $\alpha(\ell,z)=\gamma \ell$ for some fixed number $\gamma\in[0,1)$, then the unique invariant measure $\nu_r$ is explicitly given by the harmonic measure from the origin in the domain enclosed by $\gamma\wt r$, for every $r\in C_+(\bS^{n-1})$.\\
(d) If $\alpha(\ell, z)\equiv 0$ and we force $\wt{r}=r$, then $\B$ is the unique attractive solution 
of \eqref{b-bar}, whenever $r_0\in \cC=C_+^1(\bS^{n-1})$.
\end{ppn}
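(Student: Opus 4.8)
\emph{Proof plan for Proposition \ref{ppn:poisson-ker}.}
The strategy is to treat the four parts separately, leaning on the general criteria already established: Proposition \ref{inverse} for invariance, Proposition \ref{ppn:attr-sol} for attractiveness, Proposition \ref{vol-growth} for the volume growth, and the fact (Remark \ref{verify-e}) that Assumption (E) holds in this setting, so that each $\nu_r$ is the \emph{unique} invariant measure. For part (a): when $\alpha(\ell,z)=\alpha(\ell)$ does not depend on $z$, the smoothed function $\wt\B$ is a constant $c_0$, hence $H(\B,\xi)=\alpha(c_0)\xi=:\gamma_0\xi$ lies on the sphere $\gamma_0\bS^{n-1}$, and the frozen particle chain of \eqref{frozen-dyn} at $r\equiv\B$ commutes with the diagonal action $(x,\xi)\mapsto(\mathcal O x,\mathcal O\xi)$ of $\mathcal O\in O(n)$, since the harmonic measure $F(\B,x,\cdot)$ in a centered ball is rotation equivariant. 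Uniqueness of the invariant probability measure forces $\nu_\B$ to be $O(n)$-invariant (uniform on $\gamma_0\bS^{n-1}$, or $\delta_0$ if $\gamma_0=0$), so averaging $b(\B,x)(\theta)=\omega_n y_{\B,x}^{-1}F(\B,x,\theta)$ against $\nu_\B$ produces a $\theta$-independent $\ovl b(\B)$; combined with Proposition \ref{vol-growth} this pins the solution started at $\B$ to $r_t=(1+t/Leb(\B))^{1/n}\B$, which is Definition \ref{def:inv-sol}(a). For part (b), Assumption (I) splits into \eqref{scaling-inv} and \eqref{scaling-H}: smoothing is linear, $\widetilde{cr}=c\wt r$, and Brownian motion from $cx$ in $cD$ is $c$ times Brownian motion from $x$ in $D$ up to a time change, so the angular exit law is literally unchanged, giving $F(cr,cx,\cdot)=F(r,x,\cdot)$ irrespective of $\alpha$; while $H(cr,z)=\alpha(z)\widetilde{cr}(z)z=c\,\alpha(z)\wt r(z)z=cH(r,z)$ when $\alpha(\ell,z)=\alpha(z)\ell$.

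For part (c), with $H(r,\xi)=\gamma\wt r(\xi)\xi$ every post-jump particle position lies on $\partial(\gamma D_{\wt r})$, so it suffices to identify the stationary law of the induced chain on the angular coordinate $\xi\in\bS^{n-1}$, whose one step sends $\xi$ to the exit angle of a Brownian motion run from $\gamma\wt r(\xi)\xi$ until it leaves $D_{\wt r}$. The claim is that $F(r,0,\cdot)d\sigma$ (the angular exit law of Brownian motion from $D_{\wt r}$ started at $0$) is stationary. I would prove this by running a single Brownian path from the origin: letting $\tau_\gamma<\tau$ be its successive exit times from $\gamma D_{\wt r}\subset D_{\wt r}$, Brownian scaling gives that the angular coordinate of $B_{\tau_\gamma}$ is distributed as $F(r,0,\cdot)d\sigma$, and $B_{\tau_\gamma}=\gamma\wt r(\xi_0)\xi_0=H(r,\xi_0)$ with $\xi_0$ of that law; the strong Markov property at $\tau_\gamma$ then says that the angular coordinate of $B_\tau$ — which marginally is again the harmonic measure $F(r,0,\cdot)d\sigma$ of $D_{\wt r}$ from $0$ — is, conditionally on $\xi_0$, exactly one step $F(r,H(r,\xi_0),\cdot)d\sigma$ of the chain. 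Pushing $F(r,0,\cdot)d\sigma$ forward under $\xi\mapsto H(r,\xi)$ and using Brownian scaling once more identifies $\nu_r$ with the harmonic measure from the origin in $\gamma D_{\wt r}$; uniqueness is Assumption (E) (and for $\gamma=0$ everything degenerates to $\nu_r=\delta_0$).

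For part (d), $H\equiv0$ forces $\nu_r=\delta_0$, so $\ovl b(r)=b(r,0)=\omega_n y_{r,0}^{-1}F(r,0,\cdot)$ with the positive constant $y_{r,0}$ independent of $\theta$; hence the hypothesis \eqref{suff-attr} of Proposition \ref{ppn:attr-sol} reduces to $F(r,0,\argmax_\theta r)\le F(r,0,\argmin_\theta r)$. For $r\in C^1_+(\bS^{n-1})$ and $\wt r=r$, any maximizer $\theta^*$ and minimizer $\theta_*$ are critical points of $r$, where the outward normal to $\partial D$ is radial, so after the change of variables the angular density reads $F(r,0,\theta^*)=P(D,0,R_{\max}\theta^*)R_{\max}^{n-1}$ with $R_{\max}=\max r$ (and likewise at $\theta_*$ with $R_{\min}=\min r$), $P(D,0,\cdot)$ the Poisson kernel of $D$ with respect to surface measure. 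Since $B(0,R_{\min})\subseteq D\subseteq B(0,R_{\max})$ with the inclusions touching $\partial D$ at $R_{\min}\theta_*$ and $R_{\max}\theta^*$ respectively, domain monotonicity of Green's functions — hence of the inward normal derivative at a common boundary point — together with the fact that the Poisson kernel of a ball from its center is uniform yields
\[
P(D,0,R_{\max}\theta^*)\,R_{\max}^{n-1}\ \le\ \omega_n^{-1}\ \le\ P(D,0,R_{\min}\theta_*)\,R_{\min}^{n-1},
\]
which is precisely \eqref{suff-attr}. Proposition \ref{ppn:attr-sol} then gives that $\B$ is attractive for $\cC=C^1_+(\bS^{n-1})$, and since attractiveness forces the renormalized flow from every initial datum to the same limiting shape, $\B$ is the unique attractive shape.

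\emph{Main obstacle.} Two points need care. First, throughout one must keep straight the Jacobian of the radial projection $\partial D\to\bS^{n-1}$ that converts the literal normal derivative in \eqref{def-conv-F} into an honest probability density on $\bS^{n-1}$; this is what makes the critical-point simplification in part (d) and the scaling identities in parts (b),(c) clean, and it is purely bookkeeping. Second, and more substantially, in part (d) the smoothing is switched off ($\wt r=r$), so the Lipschitz machinery of Section \ref{sec:app} does not apply directly and one must still guarantee that the \abbr{ode} \eqref{b-bar} admits a $C^1(\bS^{n-1})$-solution issued from every $r_0\in C^1_+(\bS^{n-1})$, in order to invoke Proposition \ref{ppn:attr-sol}; this rests on Jerison-type boundary regularity of the Poisson kernel (the kernel gains essentially one derivative of domain regularity) together with a preservation-of-regularity argument for the flow. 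I expect this well-posedness step, and the strong-Markov identification of $\nu_r$ in part (c), to be the parts requiring attention; the remaining arguments are routine once the symmetry and scaling observations are in place.
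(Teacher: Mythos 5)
Your proposal is correct and takes essentially the same route as the paper: part (a) via rotational symmetry forcing the unique $\nu_\B$ to be uniform, part (b) via Brownian scaling and linearity of both smoothing and $\alpha$, part (c) via Brownian scaling plus the (strong Markov) composition of exit laws, and part (d) via Proposition \ref{ppn:attr-sol} and domain comparison against tangent balls at the extremal angles (your Green-function-monotonicity phrasing is the same estimate the paper obtains by coupling the two Brownian paths, and your display with the $R^{n-1}$ Jacobian makes explicit the change of variables from normal derivative on $\partial D$ to density on $\bS^{n-1}$ that the paper leaves implicit). The caveat you flag in part (d) about $C^1$-well-posedness of \eqref{b-bar} once $\wt r=r$ is the same point the paper addresses somewhat cursorily by citing Corollary \ref{cor:F-lip}, so you have not missed any idea present in the paper.
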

\begin{proof}
{\em (a)}. First note that $\B\star g=\B$, so regularization by $g$ has no effect here. Further, by the 
rotational invariance of the Brownian law, the uniform measure on $\alpha(1) \bS^{n-1}$ 
is invariant for the process $(x_t^{1,\B})$ (i.e. a Brownian motion on $\B$ starting at 
$\alpha(1) \bS^{n-1}$ and radially projected back to that set upon hitting $\partial \B = \bS^{n-1}$). In
addition, recall \eqref{def:y} that $y_{\B,x}=\omega_n$ is constant, hence so is
$\theta\mapsto \ovl{b}(\theta)$, from which it directly follows that $\B$ is invariant for \eqref{b-bar}. 

{\em (b)}. The identity \eqref{scaling-inv} is due to the scaling invariance of the Brownian motion, while \eqref{scaling-H} is satisfied by our choice.

{\em (c)}. By the scaling invariance of Brownian motion, the harmonic measure from the origin on $\gamma \wt r$ and on $\wt r$, viewed as measures on spherical angles, are equal. Since the transition kernel of the Brownian motion from $\gamma \wt r$ to $\wt r$ is exactly given by $F(r,x,\cdot)$, we see that the harmonic measure from the origin is the unique (thanks to Assumption (E)), invariant measure for $(x_t^{1,r})$.

{\em (d)}. Since $H\equiv 0$ (the origin), here $\ovl{b}(r)=F(r,0,\cdot)/y_{r,0}$. By Corollary \ref{cor:F-lip}, 
$F(r, 0, \cdot)\in C^1(\bS^{n-1})$. Hence, for any $r_0\in C_+^1(\bS^{n-1})$, the \abbr{ODE} \eqref{b-bar} admits $C^1$ solution $(r_t)_{t\ge0}$. By Proposition \ref{ppn:attr-sol}, for $\B$ to be attractive for \eqref{b-bar} it suffices to show that for any $r\in C^1(\bS^{n-1})$, the Poisson kernel from the origin to $\wt{r}$,
is at angle $\argmin_\theta r$ no smaller than at $\argmax_\theta r$. To this end, 
upon forcing $\wt{r}=r$
consider two standard Brownian motions in $\R^n$, one in the domain enclosed by $r$, the other in the Euclidean ball $\B(0, \min_\theta r)$. Couple them to move together starting from the origin until the first hitting time by both of $\partial\B(0, \min_\theta r)$, where one Brownian motion is stopped and the other can continue to move until hitting $r$. This coupling yields that $F(r, 0, \argmin_\theta r)\ge 1/\omega_n$. An analogous coupling, between a Brownian motion in the domain enclosed by $r$ and another in 
$\B(0, \max_\theta r)$, yields that $F(r, 0, \argmax_\theta r)\le 1/\omega_n$, thus verifying our claim.
\end{proof}

For anisotropic $\alpha(\ell,z)$ (that do not satisfy the condition
of Proposition \ref{ppn:poisson-ker}(a)), one may obtain other limiting shapes as  invariant solutions to the \abbr{ODE} \eqref{b-bar}, such as diamond, square etc (see Figure \ref{fig.shapes.process}), implicitly determined as in Proposition \ref{inverse} (and in the anisotropic case, the Euclidean ball is 
typically not an invariant shape).

\subsection{Distance to particle}
%\label{subsec:dist}

Following the approach of Section \ref{smooth-harmonic}, in particular as in
Remarks \ref{verify-C}, \ref{verify-e} and \ref{rmk-b}, our next proposition establishes 
Theorem \ref{harm:measure}(a) (namely, shows that Theorem \ref{thm1} applies 
without $\sigma^\ep(\delta)$ of \eqref{pdf-lbd}), for $F$ of \eqref{rule:distance}, where the
probability of choosing a boundary point is a fixed function of its distance to the particle.
\begin{ppn}\label{lip-F-dist}
For every $a\in(0,1)$, the map $(r,x)\mapsto F(r,x,\cdot)$ of \eqref{rule:distance}
is Lipschitz from $(\cAe_\infty(a), \|\cdot\|_p\times|\cdot|)$ to $(L^p(\bS^{n-1}), \|\cdot\|_p)$, for both $p=2$ and $p=\infty$. 
In addition, $\|F(r,x,\cdot)\|_{\rm Lip}$ is bounded, uniformly over $\cAe_\infty(a)$, 
while $F(r,x,\theta)$ is bounded above and away from zero, uniformly over $\cAe_2(a) \times \bS^{n-1}$.
\end{ppn}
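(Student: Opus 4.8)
The plan is to write $F(r,x,\theta)=\Phi(r,x,\theta)/Z(r,x)$ with $\Phi(r,x,\theta):=\varphi(|\wt r(\theta)\theta-x|)$ and $Z(r,x):=\int_{\bS^{n-1}}\Phi(r,x,z)\,d\sigma(z)$, and to reduce all three assertions to elementary bounds on this quotient, once the argument of $\varphi$ has been confined to a fixed compact subinterval of $(0,\infty)$.

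First I would note, via Remark \ref{rmk-b}, that $\cAe_\infty(a)\subseteq\cAe_2(a)$, so it suffices to work on $\cAe_2(a)$. By Lemma \ref{molify}, for $(r,x)\in\cAe_2(a)$ the pair $(\wt r,x)$ lies in the set $\cK_{a,C,\delta}$ of \eqref{def:K-a-c-delta}; this gives the uniform bounds $\inf_\theta\wt r(\theta)\ge a$, $\|\wt r\|_{C^3(\bS^{n-1})}\le Ca^{-1}$ and $\ovl{\B}(x,5\delta)\subseteq D_{\wt r}$. From the last containment, $x\in D_{\wt r}$, so $|x|\le\max_\theta\wt r(\theta)\le Ca^{-1}$; and the distance from $x$ to $\partial D_{\wt r}=\{\wt r(z)z:z\in\bS^{n-1}\}$ is at least $5\delta$, so $|\wt r(z)z-x|\ge 5\delta$ for all $z$. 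Hence $|\wt r(z)z-x|\in[5\delta,M]$ with $M:=2Ca^{-1}$, uniformly over $\cAe_2(a)\times\bS^{n-1}$. Since $\varphi$ is locally Lipschitz and strictly positive on $(0,\infty)$, on the compact interval $[5\delta,M]$ it is Lipschitz with some constant $L_\varphi=L_\varphi(a)$ and bounded, $0<\underline\varphi\le\varphi\le\overline\varphi<\infty$. The boundedness statement of the proposition is then immediate: the numerator of $F$ lies in $[\underline\varphi,\overline\varphi]$ and the denominator in $[\omega_n\underline\varphi,\omega_n\overline\varphi]$, so $\underline\varphi/(\omega_n\overline\varphi)\le F(r,x,\theta)\le\overline\varphi/(\omega_n\underline\varphi)$ throughout $\cAe_2(a)\times\bS^{n-1}$.

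For the Lipschitz dependence on $(r,x)$ I would argue as follows. Using $\big||u|-|v|\big|\le|u-v|$, $|\theta|=1$ and the Lipschitz bound on $\varphi$, one gets the pointwise estimate $|\Phi(r,x,z)-\Phi(r',x',z)|\le L_\varphi\big(|\wt r(z)-\wt r'(z)|+|x-x'|\big)$. Since convolution with $g=g_\eta$ is a contraction on both $L^\infty(\bS^{n-1})$ (as $1\star g=1$ and $g\ge 0$) and on $L^2(\bS^{n-1})$ (Definition \ref{def:apx-iden}), taking the $L^p(\bS^{n-1})$ norm in $z$ gives $\|\Phi(r,x,\cdot)-\Phi(r',x',\cdot)\|_p\le C_1\big(\|r-r'\|_p+|x-x'|\big)$ for $p\in\{2,\infty\}$, and integrating in $z$, $|Z(r,x)-Z(r',x')|\le C_2\big(\|r-r'\|_p+|x-x'|\big)$. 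Writing $F-F'=(\Phi-\Phi')/Z+\Phi'\big(Z^{-1}-(Z')^{-1}\big)$ and inserting the uniform bounds $Z,Z'\ge\omega_n\underline\varphi$ and $\Phi'\le\overline\varphi$ then yields $\|F(r,x,\cdot)-F(r',x',\cdot)\|_p\le C_3\big(\|r-r'\|_p+|x-x'|\big)$ for both $p=2$ and $p=\infty$, which is the first claim.

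For the uniform Lipschitz bound in $\theta$, fix $(r,x)$: here $Z(r,x)$ is a constant $\ge\omega_n\underline\varphi$, so it is enough to bound the Lipschitz seminorm of $\theta\mapsto\Phi(r,x,\theta)$. As before $|\Phi(r,x,\theta)-\Phi(r,x,\theta')|\le L_\varphi|\wt r(\theta)\theta-\wt r(\theta')\theta'|$, and $|\wt r(\theta)\theta-\wt r(\theta')\theta'|\le|\wt r(\theta)-\wt r(\theta')|+\wt r(\theta')|\theta-\theta'|$, which is at most $2\|\wt r\|_{C^1(\bS^{n-1})}$ times the geodesic (hence, up to a dimensional constant, the Euclidean) distance between $\theta$ and $\theta'$, using $\|\wt r\|_{C^1(\bS^{n-1})}\le\|\wt r\|_{C^3(\bS^{n-1})}\le Ca^{-1}$ from Lemma \ref{molify}. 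Thus $\|F(r,x,\cdot)\|_{\rm Lip}$ is bounded by a constant depending only on $a$, $\varphi$, $g$, $\alpha$ and $n$, uniformly over $\cAe_\infty(a)$. The only point requiring any care is the appeal to Lemma \ref{molify} to confine $|x|$ and simultaneously keep $|\wt r(z)z-x|$ bounded away from $0$, so that the merely locally Lipschitz $\varphi$ (which may well blow up at $0$ or $\infty$, e.g. $\varphi(t)=t^\beta$ with $\beta<0$) is only ever evaluated on a fixed compact subinterval of $(0,\infty)$; the rest is routine quotient-rule bookkeeping.
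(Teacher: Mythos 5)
Your proof is correct and follows essentially the same route as the paper's: use Lemma \ref{molify} to confine the argument of $\varphi$ to a fixed compact subinterval of $(0,\infty)$ (so that boundedness and the boundedness-away-from-zero of $F$ are immediate and the merely locally Lipschitz $\varphi$ becomes globally Lipschitz there), then run quotient-rule bookkeeping separately for the numerator $\Phi(r,x,\cdot)$ and the denominator $Z(r,x)$ for both $p=2$ and $p=\infty$, and finally bound $\|F(r,x,\cdot)\|_{\rm Lip}$ by uniformly controlling the Lipschitz constant of $\theta\mapsto\wt r(\theta)\theta-x$. The only cosmetic difference is that the paper bounds $|x|$ directly via $\|H(r,\cdot)\|_\infty\le\|\wt r\|_\infty$ and bounds $\|\wt r\|_{\rm Lip}$ by $\|g'\|_\infty\|r\|_1$, whereas you deduce both from the containment $(\wt r,x)\in\cK_{a,C,\delta}$ and the $C^3$ control of Lemma \ref{molify}; these are equivalent.
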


\begin{proof} Starting with the boundedness above and away from zero 
of $F(r,x,\cdot)$, we have in view of Lemma \ref{molify}
that $\|H(r,\cdot)\|_\infty \le \| \wt r \|_\infty \le C(g) \|r\|_2 
\le C(g) a^{-1}$. The argument of $\varphi(\cdot)$ in 
\eqref{rule:distance} is thus bounded, uniformly over $\cAe_2(a)$. 
Now recall from Lemma \ref{molify} that for some $\delta_\star(a)>0$ 
and \emph{all} $(r,x) \in  \cAe_2(a)$, the point $x$ is at least $\delta_\star$ 
away from $\wt{r}$, and thereby the argument of $\varphi(\cdot)$ in \eqref{rule:distance}
is bounded away from zero uniformly over $\cAe_2(a)$. The uniform over $\cAe_2(a) \times \bS^{n-1}$ 
bound above and away from zero on $F$ is then a direct consequence of our assumption that 
$\varphi(\cdot)$ is positive and continuous on $(0,\infty)$.
 
Next, with the numerator of \eqref{rule:distance} uniformly bounded above 
and the denominator uniformly bounded below, it  suffices to separately 
prove the Lipschitz property for the numerator and denominator in \eqref{rule:distance}.
To this end, note that  $\varphi(|\cdot|)$ is globally Lipschitz on compact subsets of $(0,\infty)$.
Dealing simultaneously with $p=2$ and $p=\infty$, it follows that for some $C=C(\vphi)$ finite
and all $(r,x), (r', x')\in\cAe_\infty(a)$, 
\begin{align*}
 \big \|\varphi(|\wt r(\theta)\theta-x|)-\vphi(|\wt r'(\theta)\theta-x'|)\big \|_p
% & \le C \big \||\wt r(\theta)\theta-x|-|\wt r'(\theta)\theta-x'|\big \|_p\\
& \le C \big \|\left(\wt r(\theta)\theta- \wt r'(\theta)\theta\right)-(x-x')\big \|_p \\
\le C\big(\|\wt r-\wt r'\|_p+ \omega_n^{1/p} |x-x'|\big) 
&\le  C \big(\|r-r'\|_p+ \omega_n^{1/p} |x-x'|\big),
\end{align*}
where 
%the second inequality follows from $\left\|a|-|b|\right|\le |a-b|$ (for $a,b\in \R^n$), and 
all $L^p$-norms are with respect to $\theta$. 
The difference between the denominator values at $(r,x)$ and $(r',x')$ is at most the
$L^1$-norm of the difference between the corresponding numerators, hence its
Lipschitz property follows from the preceding bound. 

Finally, note that $\|\wt{r}\|_{\rm Lip} \le \|g'\|_\infty \|r\|_1$ is uniformly bounded on $\cAe_\infty(a)$,
hence the same applies for $\theta \mapsto \wt{r}(\theta)\theta-x$.
With $\vphi(|\cdot|)$ Lipschitz on compacts, it thus 
follows that $\|F(r,x,\cdot)\|_{\rm Lip}$ is uniformly bounded on $\cAe_\infty(a)$.
\end{proof}

We turn to get the analog of Proposition \ref{ppn:poisson-ker} for this model, thereby 
establishing Theorem \ref{harm:measure}(b) in this setting.
\begin{ppn}\label{ppn:dist-rule-inv} $~$ \\
(a) If the function $\alpha(\ell,z)=\alpha(\ell)$ does not separately depend on $z$, then the centered Euclidean ball $\B$ is an invariant solution to \eqref{b-bar}. \\
(b) Assumption (I) holds whenever $\alpha(\ell,z)=\alpha(z)\ell$,
% depends linearly on $\ell$ and 
$\vphi(t)=t^\beta$ for some $\beta \in \R$.\\
(c) If $\alpha(\ell, z)\equiv 0$, $\beta<0$ and we force $\wt{r}=r$, then $\B$ is the unique attractive solution 
of \eqref{b-bar}, whenever $r_0\in C_+^1(\bS^{n-1})$.
\end{ppn}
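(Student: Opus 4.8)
The plan is to mirror the proof of Proposition~\ref{ppn:poisson-ker}, replacing its Brownian exit-distribution computations by direct manipulations of the explicit kernel \eqref{rule:distance}. Having already checked Assumptions (C), (E) and (L) for this model (via Proposition~\ref{lip-F-dist} and Proposition~\ref{ppn:implic}), the measures $\nu_r$ of Assumption (E) are well defined and unique, and Proposition~\ref{inverse} is available to test invariant shapes; I use these freely below.

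For part (a) I would first note that $\wt{\B}=\B\star g=\B$, and that, with $\alpha$ independent of $z$, $H(\B,z)=\alpha(1)z$ while $F(\B,Qx,Q\theta)=F(\B,x,\theta)$ and $H(\B,Qz)=QH(\B,z)$ for every orthogonal $Q$ (since $\varphi(|Q\theta-Qx|)=\varphi(|\theta-x|)$). Hence the frozen chain $(x_t^{1,\B})$ is rotation equivariant, its pushforward under any $Q$ is again invariant, and uniqueness forces $\nu_\B$ to be rotation invariant (it is $\delta_0$ when $\alpha(1)=0$, and the uniform law on $\alpha(1)\bS^{n-1}$ otherwise). Recall \eqref{def:y} that $y_{\B,x}=\omega_n$, since $\B^{n-1}\equiv1$ and $F(\B,x,\cdot)$ integrates to $1$ against $\sigma$, so $b(\B,x)(\theta)=F(\B,x,\theta)$; then $\theta\mapsto\ovl{b}(\B)(\theta)=\int F(\B,x,\theta)\,d\nu_\B(x)$ is rotation invariant, hence constant, and integrates in $\theta$ to $1$, giving $\ovl{b}(\B)\equiv\omega_n^{-1}=n^{-1}\B/Leb(\B)$, so $\B$ is an invariant shape by Proposition~\ref{inverse}.

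For part (b) the two scaling identities of Assumption (I) are immediate: $\wt{cr}=c\wt{r}$ by linearity of \eqref{integ-1} in $r$, whence $H(cr,z)=\alpha(z)\wt{cr}(z)z=c\,H(r,z)$, which is \eqref{scaling-H}; and $|\wt{cr}(\theta)\theta-cx|=c\,|\wt{r}(\theta)\theta-x|$, so for $\varphi(t)=t^\beta$ the common factor $c^\beta$ cancels between numerator and denominator of \eqref{rule:distance}, giving $F(cr,cx,\cdot)=F(r,x,\cdot)$, which is \eqref{scaling-inv}; finally $cx$ stays off $\wt{cr}(\bS^{n-1})=c\,\wt{r}(\bS^{n-1})$, so $(cr,cx)\in\cD(F)$. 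This works for every $\beta\in\R$.

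For part (c), with $\alpha\equiv0$ one has $H\equiv0$, so the frozen chain is absorbed at the origin, $\nu_r=\delta_0$, and (forcing $\wt{r}=r$, using $|r(\theta)\theta|=r(\theta)$ and $\varphi(t)=t^\beta$) a direct computation gives
\[
\ovl{b}(r)(\theta)=b(r,0)(\theta)=\frac{r(\theta)^\beta}{\int_{\bS^{n-1}}r(z)^{n-1+\beta}\,d\sigma(z)}\,.
\]
Next I would observe that this $\ovl{b}$ maps $C^1_+(\bS^{n-1})$ into itself and is locally Lipschitz there (the denominator being a positive, locally Lipschitz constant in $r$, and $t\mapsto t^\beta$ being smooth on $[a,a^{-1}]$), so the \abbr{ODE}~\eqref{b-bar} is locally well posed in $C^1$; global existence in $C^1_+$ then follows since $\ovl{b}(r)>0$ pointwise (so $r_t$ is non-decreasing and bounded below by $\min_\theta r_0>0$), $\mathsf{osc}(r_t)\le\mathsf{osc}(r_0)$ along the maximal interval (exactly as in the proof of Proposition~\ref{ppn:attr-sol}), and a Gronwall bound for $\nabla r_t$, together precluding finite-time blow-up. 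Finally, since $\beta<0$ makes $t\mapsto t^\beta$ strictly decreasing while the denominator above is independent of $\theta$, we get $\ovl{b}(r)(\argmax_\theta r)\le\ovl{b}(r)(\argmin_\theta r)$, i.e.\ \eqref{suff-attr}, so Proposition~\ref{ppn:attr-sol} makes $\B$ attractive for $\cC=C^1_+(\bS^{n-1})$; uniqueness is automatic because, in the normalization of Definition~\ref{def:inv-sol}(b), any attractive $\psi$ equals $\lim_{t\to\infty}(Leb(r_0)+t)^{-1/n}r_t$. The one genuinely delicate step is this global-in-time $C^1$ well-posedness, needed for Proposition~\ref{ppn:attr-sol} to apply; everything else is routine algebra with \eqref{rule:distance}.
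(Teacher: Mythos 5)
Your proof is correct and follows essentially the same route as the paper's: part (a) via rotation equivariance of the frozen chain and constancy of $\ovl b(\B)$; part (b) via homogeneity of $t\mapsto t^\beta$ and linearity of $\wt{cr}=c\wt r$; part (c) via the explicit formula $\ovl b(r)(\theta)=r(\theta)^\beta/\|r\|_q^q$ and the monotonicity of $t\mapsto t^\beta$ for $\beta<0$, feeding into Proposition~\ref{ppn:attr-sol}. Your only addition is spelling out the global $C^1$ well-posedness step (via the a priori oscillation and Gronwall bounds), which the paper asserts implicitly; that elaboration is sound.
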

\begin{proof} {\em (a)}. Following the proof of Proposition \ref{ppn:poisson-ker}(a), 
the invariance of  $|\theta - \alpha(1) z|$ to a \emph{common rotation} of 
$(z,\theta) \in \bS^{n-1} \times \bS^{n-1}$ yields that the uniform measure on $\alpha (1) \bS^{n-1}$
is invariant for $(x_t^{1,\B})$. 
   
{\em  (b).} It is easy to check that having $\vphi(t)=t^{ \beta}$ in \eqref{rule:distance} results with
\eqref{scaling-inv} holding, whereas \eqref{scaling-H} is satisfied by the assumed linearity in $\ell$
of $\alpha(\ell,z)$. Thus,  Assumption (I) holds in this case and from part (a) we know that when
$\alpha(\ell,z) \equiv 0$ the ball $\B$ is an invariant solution  to \eqref{b-bar}. 

{\em (c)}. For $H\equiv 0$ and $\vphi(t)=t^\beta$, one has that 
\begin{equation}\label{def:b-bar-H-0}
\ovl{b}(r)(\theta) =y_{r,0}^{-1} F(r,0,\theta) = \|\wt{r}\|_{q}^{-q} \; \wt{r}(\theta)^\beta \,,
\quad \mbox{ for } \quad q:= n-1+\beta \,.
\end{equation}
In particular, for any $r_0\in C_+^1(\bS^{n-1})$, the \abbr{ODE} \eqref{b-bar} with $\ovl{b}(r)$ of
\eqref{def:b-bar-H-0} admits $C^1$ solution $(r_t)_{t\ge0}$ and in view of Proposition \ref{ppn:attr-sol}, 
for $\B$ to be attractive for \eqref{b-bar} when $\beta<0$,
it suffices to have along such solution $r_t$, the regularized 
$\wt{r}_t$ no smaller at angle $\argmax_\theta r_t$ than at 
% angle 
$\argmin_\theta r_t$. This  indeed holds once we force $\wt{r}=r$.
\end{proof}

%%      ---------------------------------------------------------------------
%%      ------------------------- APPENDIX (OPTIONAL) -----------------------
%%      ---------------------------------------------------------------------
        
%%      If you have one appendix, uncomment the line \appendix and add
%%      a \section{ *** APPENDIX TITLE ***}. If you have more than
%%      one, uncomment the line \appendices and add a \section{ ***
%%      APPENDIX TITLE ***} command for each appendix title.

\appendix
%\appendices
\section{}

%%      Type body of appendix/-ices here.
\noindent
\begin{proof}[Proof of Proposition \ref{ppn:implic}] $~$
\\ {\em (a)} Since $b(r,x)$ is a ratio of $F(r,x,\cdot)$ and the scalar $y_{r,x}$, with  $y_{r,x}\ge \omega_n a^{n-1}$ on $\cA_\infty(a)$ and 
$F$ assumed Lipschitz in $L^p$-norm on this set (by \eqref{lip-F}), it suffices to show that in addition 
$(r,x)\mapsto y_{r,x}$ is Lipschitz in that norm (on $\cA_\infty(a)$). 
To this end, note that for any $(r,x), (r', x')\in \cAe_\infty(a)$ and $1/p+1/q=1$,

\begin{align*}
\omega_n^{-1} |y_{r,x}-y_{r', x'}|\le \int_{\bS^{n-1}}&\left|r^{n-1}(\theta)F(r,x,\theta)-(r')^{n-1}(\theta)F(r',x',\theta)\right|d\sigma(\theta)\\
\le \Big[\int_{\bS^{n-1}}&|r^{n-1}-(r')^{n-1}|(\theta)F(r,x,\theta)d\sigma(\theta) \\
&+\int_{\bS^{n-1}}(r')^{n-1}(\theta)|F(r,x,\theta)-F(r',x',\theta)|d\sigma(\theta)\Big]\\
\le n  a^{-(n-2)} \|r-&r'\|_p\|F(r,x,\cdot)\|_q+ a^{-(n-1)} \|F(r,x,\cdot)-F(r',x',\cdot)\|_1\, ,
\end{align*}
by H\"older's inequality and expanding $|r^{n-1}-(r')^{n-1}|$. Considering either $p=q=2$, or 
$p=\infty$, $q=1$,  our claim follows from \eqref{lip-F} at $p$, since 
$p \mapsto \omega_n^{-1/p} \|\cdot\|_p$ is non-decreasing for functions on $\bS^{n-1}$.

Turning next to $h$, for any $(r,x), (r', x')\in\cAe_\infty(a)$, by \eqref{lip-F}, \eqref{lip-H},
\begin{align*}
|h(r,x)-h(r',x')|- |x-x'|&\le \int_{\bS^{n-1}}\left|H(r,\theta)F(r,x,\theta)-H(r',\theta)F(r',x',\theta)\right|d\sigma(\theta)  \\
\le \int_{\bS^{n-1}}|H(r,\theta)(F(r,x,\theta)&-F(r',x',\theta))|d\sigma(\theta)\\
&+\int_{\bS^{n-1}}|H(r,\theta)-H(r',\theta)|F(r',x',\theta)d\sigma(\theta)\\
\le \|H(r,\cdot)\|_2\|F(r,x,\cdot)-&F(r',x',\cdot)\|_2+K\|r-r'\|_2\le C(a)(||r-r'||_2+|x-x'|) 
\end{align*}
as needed. 

Fixing $a \in (0,1)$, and dealing simultaneously with $p=2$ and $p=\infty$,  
we proceed to show that $r\mapsto \ovl{b}(r)$
is Lipschitz from $(\cA_\infty(a), \|\cdot\|_p)$ 
to $(L^p(\bS^{n-1}), \|\cdot\|_p)$. To this end note that 
the Markov chain on $\bS^{n-1}$ of transition kernel density 
$\mathsf{K}_r(\xi,\theta) = F(r, H(r,\xi),\theta)$ with respect to the uniform measure, 
% on $\bS^{n-1
is, by \eqref{F-lbd}, uniformly ergodic throughout $r\in\cA_2(a)$, 
with ergodicity coefficient depending only on $a$ (via $\udl{F}(a)$), and as such, 
has a unique invariant measure $\mu_r$ on $\bS^{n-1}$. Further,
as the rule $H$ is non-random, the invariant measure $\nu_r$ of the Markov chain 
$\{x^{1,r}_{T_i}\}$ is merely the push-forward by $\xi \mapsto H(r,\xi)$ of 
the aforementioned $\mu_r$ with $\ovl{b}$ of \eqref{b-bar} represented alternatively as
\[
\ovl{b} (r) 
%= \int b(r,x) d\nu_r (x) 
= \int b_\star(r,\xi) d\mu_r(\xi) \,, \quad \mbox{ for } \quad b_\star(r,\xi) := b(r,H(r,\xi)) \,.
\]
Next,  recall the preceding where we saw that \eqref{lip-F} results with \eqref{lip-b} holding
at the corresponding value of $p$. Combining the latter with \eqref{lip-H},
we deduce that the analogous to \eqref{lip-b}, uniform over $\cAe_\infty(a)$ Lipschitz bounds, 
apply also for $b_\star(\cdot,\cdot)$. Thus, 
utilizing the characterization of total variation norm of 
finite signed measures (cf. \cite[page 124]{Ha}),
we have for some finite constant $C=C(a)$ and all $r,r'\in\cA_\infty(a)$, 
\begin{align*}
% &
\|\ovl{b}(r)-\ovl{b}(r')\|_p
%= \Big \|\int b(r,H(r,\xi)))d\mu_{r}(\xi)-\int b(r',H(r',\xi)d\mu_{r'}(\xi)\Big \|_2       \nonumber \\
& \le \big \|\int b_\star (r,\xi) d (\mu_{r} - \mu_{r'})(\xi)\big \|_p + \int \|b_\star (r,\xi) - b_\star(r',\xi)\|_p 
d\mu_{r'}(\xi) \nonumber \\
&\le 2\sup_{\cA_\infty(a) \times \bS^{n-1}} \{ \| b_\star (r,\xi,\cdot)\|_\infty \} \, \|\mu_{r} - \mu_{r'}\|_{{\rm TV}} 
 + C(a)||r-r'||_p\, .  %\label{eq:sensi}
\end{align*}
Turning to the first term, recall that $b_\star(r,\cdot) \le a^{-(n-1)} F(r,\cdot)$ throughout $\cA_\infty(a)$, so 
in view of \eqref{F-ubd} it suffices to bound 
$\|\mu_{r} - \mu_{r'}\|_{{\rm TV}}$.
Denoting by
$\mathcal M_1$ the space of signed Borel measures on $\bS^{n-1}$ with total variation one, we find that (per notation in \cite[(2.1)]{Mi}), for some $K=K(a)$ and $C=C(a)$,
\begin{align*}
\| \mathsf{K}_r - \mathsf{K}_{r'}\|_{\rm op} & := 
% \sup_{\mu \in \mathcal M_1} \|\mu \mathsf{K}_r - \mu \mathsf{K}_{r'} \|_{\rm TV}\\ & =
 \frac{1}{2}\sup_{\mu \in \mathcal M_1 }\sup_{\{g 
%\text{ measurable }
:\, |g|\le 1\}}\Big|\int_{\bS^{n-1}}\int  g(\theta) (\mathsf{K}_r
(\xi,\theta)- \mathsf{K}_{r'} (\xi,\theta)
) d\mu(\xi)  d\sigma(\theta)   \Big|\\
& \le  \frac{\omega_n}{2} \sup_{\xi \in \bS^{n-1}} \Big\{
\| F(r, H(r,\xi), \cdot) - 
F(r', H(r',\xi), \cdot)\|_p \Big\}\\  
& \le K\frac{\omega_n}{2} \big(\|r-r'\|_p +\sup_{\xi \in \bS^{n-1}}|H(r,\xi)-H(r',\xi)| \big)\\
&\le  K\frac{\omega_n}{2} (\|r-r'\|_p+ C\| r-r'\|_2) \le C\|r-r'\|_p
\end{align*}
(using in the above, also \eqref{lip-F} and \eqref{lip-H}).
Consequently, by \cite[Corollary 3.1]{Mi}, for some 
$\wt C=\wt{C} (a)$ which depends 
on the uniform ergodicity coefficient,
%  we have that
\[
 \|\mu_{r} - \mu_{r'}\|_{{\rm TV}} \le  \wt C \| \mathsf{K}_r - \mathsf{K}_{r'}\|_{\rm op}  \le 
\tilde{C} \, C \|r-r'\|_p\,,
\]
thereby completing the proof.

\noindent
{\em (b)} Consider the Banach space $(C(\bS^{n-1}), \|\cdot\|_\infty)$, and the subset $\cA_\infty(a)$ for some $a<\inf_\theta r_0(\theta)\wedge \|r_0\|^{-1}_\infty$.
Since per \eqref{lip-bar}, the map $r\mapsto\ovl b(r)$ is Lipschitz from $L_\infty$ to $L_\infty$ in $\mathcal A_\infty(a)$, there exists a unique continuous solution $(r_t)$ to \eqref{b-bar} locally in time, defined up to the first exit time of the set $\mathcal A_\infty(a)$, cf. \cite[Theorem 7.3]{Am}. 
Hence, Proposition \ref{vol-growth} holds as long as the solution is defined, and we can bound the growth of its $L^2$-norm by H\"older's inequality
\[
\|r_t \|_2 \le \omega_n^{\frac{n-2}{2n}}\|r_t\|_n 
\le \omega_n^{\frac{n-2}{2n}}n^{\frac{1}{n}}(Leb( r_0 )+t)^{\frac{1}{n}}=:p(t) \,.
\]
This in turn implies the following bound on the growth of $F(r_t, \cdot, \cdot)$, 
\[
\sup_{\substack{x\in \text{Image}(H(r_t, \cdot))\\ \theta\in \bS^{n-1}}} \{F(r_t, x, \theta)\} \le \ovl F(p(t))\,,
\]
in terms of $\ovl F(p(t))$ of \eqref{F-ubd}, thereby leading to
\[
\sup_{\theta\in\bS^{n-1}}\bar{b}(r_t)(\theta)\le \ovl F(p(t))(\inf_\theta r_0(\theta))^{-(n-1)}.
\]
Thus, by the \abbr{ODE} \eqref{b-bar}, as long as the solution is defined,
we have control on the growth of its $L_\infty$-norm, 
\[
\|r_t\|_\infty\le \|r_0\|_\infty+\int_0^t \ovl F(p(s))) ds (\inf_\theta r_0(\theta))^{-(n-1)}=:q(t).
\]
Given any $T<\infty$, taking in the beginning $a<q(T)^{-1}\wedge \inf_\theta r_0(\theta)$, ensures the existence and uniqueness of a continuous solution up to time $T$.

\noindent
{\em (c)} %For any $r \in C_+(\bS^{n-1})$ the process $(x^{1,r}_t)_{t \ge 0}$,
%and in particular the embedded chain $\{x_{T_i}^{1,r}\}$, 
%stays in the set $\cK_r$. Thus, t
The minorisation \eqref{minor} implies by standard theory 
of general state space Markov chains (see \cite[Theorem 8]{RR}), that for 
any $r \in C_+(\bS^{n-1})$ the embedded chain $\{x_{T_i}^{1,r}\}$ has a  
unique invariant measure $\nu_r$, with the uniform on $\cAe_\infty(a)$
% exponential 
convergence 
% to it in total variation
\[
\sup_{(r,x) \in \cAe_2(a)} \{ \|\mathsf{P}_r^{n}(x,\cdot)- \nu_r(\cdot)\|_{\text{\rm TV}} \} \le (1-\delta)^{\lfloor n/n_0 \rfloor}  \, .
\]
The proof is by coupling, which extends to the 
process $(x^{1,r}_t)_{t\ge0}$ with $x^{1,r}_0=x$ 
and its stationary version 
$(\mathsf{x}^{1,r}_t)_{t\ge 0}$ (i.e. starting at distribution $\nu_r$
and using the same jump times $\{T_i\}$ for both processes). It follows that 
the processes coalesce at the coupling time $\cT_x$ with
\[
\|\P_x(x^{1,r}_t \in \cdot) - \nu_r(\cdot)\|_{\text{\rm TV}} \le \P_x(x^{1,r}_t \ne \mathsf{x}^{1,r}_{t}) = \P(\cT_x >t) \le e^{-ct} , 
\]
for some positive constant $c=c(\delta, n_0)=c(a)$, 
any $t \ge 0$ and all $(r,x) \in \cAe_\infty(a)$. 
By the triangle inequality, employing this coupling 
for proving \eqref{rate}, we separately bound 
\begin{align}\label{bd-1}
\sup_{t_0\ge 0}\E\Big \|\frac{1}{t}\int_{t_0}^{t_0+t}[b(r,x^{1,r}_s)-b(r,\mathsf{x}^{1,r}_s)]ds \Big \|_2^2
\end{align}
and 
\begin{align}\label{bd-2}
\sup_{t_0\ge 0}\E\Big \|\frac{1}{t}\int_{t_0}^{t_0+t}[b(r,\mathsf{x}^{1,r}_s)-\ovl{b}(r)]ds \Big \|_2^2 \,.
\end{align}
There is no contribution to \eqref{bd-1} from $s \ge \cT_x$ and a-priori
$\|b(r,x)\|_2 \le C(a)<\infty$  
for all $(r,x) \in \cAe_\infty(a)$. Hence \eqref{bd-1} is at most
$4 C(a)^2 \E \cT_x/t \le 4 C(a)^2/(c t)$. By stationarity
the expectation in \eqref{bd-2} is independent of $t_0$ and utilizing 
the Markov property, it equals
\begin{equation}\label{bd-3}
\frac{2}{t} \int d\nu_r(x) \int_{0}^{t} \big(1-\frac{u}{t}\big) \Delta_{r,x} (u) du \,,
\end{equation}
where by Fubini
\[
\Delta_{r,x}(u) :=  \E_x \Big[ \int_{\bS^{n-1}} 
[ b(r,x^{1,r}_u)(\theta) - \ovl{b}(r)(\theta) ] 
b(r,x)(\theta) 
 d\sigma(\theta) \Big]
\,.
\]
Using the preceding coupling per value of $x$ in \eqref{bd-3}, we
deduce that  
\[
|\Delta_{r,x}(u)| \le \Gamma_{r,x} \, \P(\cT_x > u) \,,
\]
where by Cauchy-Schwarz, for the pre-compact 
$\cK_r = \{x\in\R^n \colon (r,x) \in \mathcal D(F), \, x \in {\rm Image}(H(r,\cdot)) \}$,
\begin{align*}
\Gamma_{r,x} := &
\sup_{y,y' \in \cK_r} \;
\int_{\bS^{n-1}} 
| b(r,y)(\theta) - b(r,y')(\theta) |
b(r,x)(\theta) 
 d\sigma(\theta) \\ 
 &\le 2 \sup_{x' \in \cK_r} \|b(r,x')\|_2^2
\le 2 C(a)^2 \,.
\end{align*}
%there is coupling to show that the temporal auto-correlation function of $(\mathsf{x}^{1,r}_t)_{t\ge 0}$ can be bounded uniformly in $r\in\cA_2(a)$, per $\theta\in \bS^{n-1}$, by
%\[
% |\Delta_{r,x}(u)| 
% = \big|\E\left[b(r,\mathsf{x}_u)(\theta)b(r,\mathsf{x}_0)(\theta)\right] - \E
% \left[b(r,\mathsf{x}_u)(\theta)\right]\E\left[b(r,\mathsf{x}_0)(\theta)\right]
% \big| 
% \le c^{-1}\P(\cT_1 > u) \le c^{-1}e^{-cu}
% \]
%where $\cT_1$ now is the coupling time between the process $(\mathsf{x}^{1,r}_s)_{s\ge 0}$ and the shifted $(\mathsf{x}^{1,r}_{u+s})_{s\ge 0}$ 
%(one needs to adjust for jump time discrepencies, but we do not detail this).}
%This allows us to apply ergodic theorem, see e.g. \cite[Theorem 2]{KS}, to conclude that, per $\theta\in \bS^{n-1}$, uniformly in $r\in\cA_2(a)$,
%\[
%\E\Big(\frac{1}{t}\int_{0}^{t}[b(r,\mathsf{x}^{1,r}_s)(\theta)-\ovl{b}(r)%(\theta)]ds\Big)^2\le \frac{C(a)}{t}.
%\]
%By Fubini and the uniformity of the preceding bound, we bound \eqref{bd-2} by $C(a)/t$ as well.
Plugging into \eqref{bd-3} this uniform bound on $\Gamma_{r,x}$ 
and the uniform tail bound on $\cT_x$, bounds the term \eqref{bd-2} by
$4 C(a)^2/(ct)$, thereby completing the proof.

\noindent
{\em (d)} The processes $(R^\ep_{t\wedge \zeta^\ep},x^\ep_{t \wedge \zeta^\ep})$
are, for some $a=a(r_0,\delta)$, within $\cAe_\infty(a)$, whereby $y_{r,x} \ge \omega_n a^{n-1}$. 
Thus, given \eqref{eta}, the definition of $b(r,x)$ and the bound \eqref{approx-id} on 
$\| f \star g_\eta - f \|_2$,
it suffices for Assumption (C) to bound the rate of convergence to zero as $t \to 1$, 
for the \abbr{rhs} of \eqref{approx-id} at $f=F(r,x,\cdot)$, uniformly over $(r,x) \in \cAe_\infty(a)$.
In particular, recall from \eqref{def:Tt} that $\|T_t f-f\|_2 \le 2 \|f\|_{\rm Lip} (1-t)^{1/2}$ (since
$\|x-y\|_2 = \sqrt{2(1-\langle x,y \rangle)}$ when $x,y \in \bS^{n-1}$), hence
\eqref{F-ubd-Lip} suffices for Assumption (C) to hold.

\noindent
{\em(e)} By our construction and \eqref{eta}, any bump added to some $R^\ep_t$ 
is supported on a spherical cap in $\bS^{n-1}$ whose radius is at most 
\begin{align}\label{eta-ubd}
2 \eta(\ep, R^\ep_t, x) \le 2 \omega_n^{-1/(n-1)} \ep^{1/n}  (\inf_\theta r_0(\theta))^{-1}=: \etas \,.
\end{align}
Proceeding to cover $\bS^{n-1}$ by spherical caps $\{S_j\}_{j=1}^{N_\ep}$ of radius $\etas$, 
in view of \eqref{eta-ubd} the growth of $R^\ep_{T^\ep_i}$ 
\emph{somewhere} within $S_j$ requires that $x^\ep_{T^\ep_i}$ hit the concentric cap  
of radius $2 \etas$. 
% and same center as $S_j$.
Further, for $\kappa^\ep$ of \eqref{kappa-ep} and 
$\ga=\ga(r_0,T):=(1+\|r_T\|_2)^{-1}\wedge \inf_\theta r_0(\theta)$, 
\begin{align}\label{A2-bd}
(R^\ep_{t\wedge\kappa^\ep})_{t\in[0, T]}\in \cA_2(\ga)\,.
\end{align}
Thus, for any $T^\ep_i \le \kappa^\ep$, the probability of $x^\ep_{T^\ep_i}$ hitting a spherical cap
of radius $2 \etas$, is by  \eqref{F-ubd} and \eqref{A2-bd},  
at most $\ovl{F}(\ga) \omega_n (2\etas)^{n-1}$. Consequently, the total number of changes in $R_t^\ep$ 
restricted to $S_j$ and time interval $[0,T\wedge \kappa^\ep]$, is stochastically dominated 
by a Poisson variable of mean
\begin{equation}\label{lambda-ep}
\lambda_\ep := \ep^{-1} T \ovl{F}(\ga) \omega_n (2\etas)^{n-1} = C_\star \ep^{-1/n} \,,
\end{equation}
with $C_\star=C_\star(r_0,T,\ga)$ finite. Recall \eqref{eta} that we add to the domain the (local) bump
$\ep y^{-1}_{r,x} g_\eta = \ep^{1/n} \eta^{n-1} g_\eta$,
whose radial height is at most
\[
\ep^{1/n}\eta^{n-1}\|g_\eta\|_\infty\le \ep^{1/n}\Lambda,
\]
for some absolute finite constant $\Lambda$ and any $(r,x)$. Consequently, by a union bound 
over the $N_\ep$ spherical caps in our covering of $\bS^{n-1}$, 
\[
\P\Big(\|R^\ep_{T \wedge \kappa^\ep}\|_\infty \ge \|r_0\|_\infty + 2 C_\star \Lambda\Big)
\le  N_\ep \, \P( \; \hbox{ Poisson}(\lambda_\ep) \ge 2 \lambda_\ep) \,.
\]
By volume considerations $N_\ep \le c_n \etas^{-(n-1)}$ for some universal constant $c_n$.
In view of \eqref{lambda-ep} and the definition \eqref{eta-ubd} of $\etas$, 
this translates to $N_\ep \le C(n,r_0) \lambda_\ep^{n-1}$ for some finite 
$C(n,r_0)$. Thus, from the (super) exponential in $\lambda_\ep$ tail probabilities for 
a Poisson($\lambda_\ep)$ law, we deduce that for $\delta_n = (\|r_0\|_\infty+2 C_\star \Lambda)^{-1}$,
\[
\lim_{\ep\to0} \P\big( \|R^\ep_{T\wedge\kappa^\ep}\|_\infty > \delta_n^{-1}\big)=0
\]
and our claim then follows from
the definition \eqref{zeta} of $\zeta^\ep(\cdot)$.
\end{proof}

\begin{proof}[Proof of Proposition \ref{vol-growth}]
For $C_+(\bS^{n-1})$-solutions $( r_t )_{t\ge 0}$, \eqref{b-bar} is valid in pointwise sense and we can compute
\begin{align*}
\frac{d}{dt}\{Leb( r_t )\}&=\frac{d}{dt}\left\{\int_{\bS^{n-1}}n^{-1} r_t (\theta)^nd\sigma(\theta)\right\}=\int_{\bS^{n-1}} r_t (\theta)^{n-1}\frac{d}{dt}\{ r_t (\theta)\}d\sigma(\theta)\\
&=\int_{\bS^{n-1}} r_t (\theta)^{n-1}\int_{\R^n}b( r_t , x)(\theta)d\nu_{ r_t }(x)d\sigma(\theta)\\
&=\int_{\R^n}\left(\int_{\bS^{n-1}} r_t ^{n-1}(\theta)b( r_t , x)(\theta)d\sigma(\theta)\right)d\nu_{ r_t }(x)\\
&=\int_{\R^n}d\nu_{ r_t }(x)=1\, ,
\end{align*}
yielding $Leb( r_t )=Leb( r_0 )+t$, for any $t\ge 0$. 

We proceed to similarly verify \eqref{eq:Leb-add} for any $(r,x)\in\cD(F)$. Indeed, 
\begin{align*}
&\E[Leb(r+\ep y_{r,x}^{-1}g_\eta(\langle\xi, \cdot\rangle))-Leb (r)]\\
&=\int_{\bS^{n-1}}n^{-1}\int_{\bS^{n-1}}[r(\theta)+\ep y^{-1}_{r,x}g_\eta(\langle z,  \theta\rangle)]^nF(r,x,z)d\sigma(z)d\sigma(\theta)\\
&   \quad\quad\quad\quad -n^{-1}\int_{\bS^{n-1}}r(\theta)^nd\sigma(\theta)\\
&=\ep\int_{\bS^{n-1}}\int_{\bS^{n-1}}r(\theta)^{n-1}y^{-1}_{r,x}g_\eta(\langle z, \theta\rangle)F(r,x,z)d\sigma(z)d\sigma(\theta)+o(\ep)\\
&=\ep\int_{\bS^{n-1}}r(\theta)^{n-1}(b(r,x) \star g_\eta)(\theta)d\sigma(\theta)+o(\ep)\\
&=\ep\int_{\bS^{n-1}}r(\theta)^{n-1}[b^\ep(r,x)(\theta)-b(r,x)(\theta)]d\sigma(\theta)\\
&\quad\quad\quad\quad +\ep\int_{\bS^{n-1}}r(\theta)^{n-1}b(r,x)(\theta)d\sigma(\theta)+o(\ep).
\end{align*}
The second term gives exactly $\ep$. Upon applying Cauchy-Schwarz inequality to the first term and using the $L^2$-approximation property \eqref{approx-id} of the spherical approximate identity as $\ep\to 0$, we see that the whole expression is $\ep+o(\ep)$.
\end{proof}

\begin{proof}[Proof of Lemma \ref{molify}]
Recall the definition \eqref{integ-1} of spherical convolution. For any multi-index $\alpha$
with $|\alpha|=k\in \{0,1,2,3\}$ and any $z\in \bS^{n-1}$, we have that
\begin{align*}
|\partial^\alpha(\wt{r}-\wt{r}')(z)|&=\frac{1}{\omega_n}\Big| \int_{\bS^{n-1}}(r-r')(\theta)\partial^\alpha g(\langle  z, \theta \rangle ) d\sigma(\theta)\Big|\\
% &\le \frac{1}{\omega_n}\int_{\bS^{n-1}}\Big|(r-r')(\theta)\partial^\alpha g(\langle z, \theta\rangle )\Big| 
% d\sigma(\theta)\\
&\le \frac{1}{\omega_n}\|r-r'\|_2\sup_{z\in \bS^{n-1}}\|\partial^\alpha g(\langle z, \cdot \rangle)\|_2
\end{align*}
where $\partial^\alpha$ is any $k$-th order derivative with respect to $z$ variable on $\bS^{n-1}$. 
Since $\bS^{n-1}$ is compact, hence the supremum in the last line is finite and depends only on $g$, 
we arrive at the claimed bound \eqref{C3L2} 
on $\|\wt{r}-\wt{r}'\|_{C^3(\bS^{n-1})}$. In particular, from \eqref{C3L2} with 
$r'=0$ (hence $\wt{r}'=0$), we have that
$\|\wt{r}\|_{C^3(\bS^{n-1})} \le C \|r\|_2 \le C a^{-1}$ for any $r \in \cA_2(a)$. 
Since convolution with $g \ge 0$ does not lower the minimal value of $r \in \cA_2(a)$, 
it further follows that then $\inf_\theta \wt{r}(\theta) \ge a$. Finally, from \eqref{alpha:bdd}
the continuous and positive  $\ell-\alpha(\ell,z)$ is bounded away from zero 
on the compact $[a,C a^{-1}] \times \bS^{n-1}$. In particular, for some $\eta=\eta(a,C)>0$
and all $(r,x) \in \cAe_2(a)$ the radial distance of $x$ from $\partial D_{\wt{r}}$ must be
at least $\eta$. The uniform over $\cA_2(a)$ control on $\|\wt{r}\|_{C^3(\bS^{n-1})}$ 
implies a uniform bound on the 
%operator norm of the 
Hessian of $\wt r (\cdot)$ and hence that $\ovl{\B}(x,5\delta)$ be within the open, 
star-shaped domain $D_{\wt{r}}$ 
% of $C^3$-boundary $\wt{r}$, 
for some $\delta=\delta(C a^{-1},\eta)>0$. In conclusion, if $(r,x) \in \cAe_2(a)$ 
then $(\wt{r},x) \in \cK_{a,C,\delta}$ of \eqref{def:K-a-c-delta}, as claimed.
\end{proof}

\begin{proof}[Proof of Proposition \ref{ppn:local-lip}] Fixing $a \in  (0,1)$, $C<\infty$ and
$\delta>0$, we 
% denote by $\B(x,\delta)$ the open Euclidean ball of radius $\delta$ centered at $x \in \R^n$ and
write for brevity $\|(\wt{r},x)\| = \|\wt{r}\|+|x|$
and $\|\wt{r}\|$ for the $C^{2,\gamma}(\bS^{n-1})$ norm of
% the function 
$\wt{r}$. By the Arzel\`a-Ascoli theorem, a $C^3(\bS^{n-1})$ closed ball is $C^{2,\gamma}(\bS^{n-1})$-compact, 
which clearly extends to the $(\wt{r},x)$-pairs with $\ovl{\B}(x,5\delta) \subseteq D_{\wt{r}}$,
% (and thereby $|x| \le \| \wt{r} \|_\infty \le C a^{-1}$), 
hence also to $\cK_{a,C,\delta}$. It thus 
suffices to prove that $(\wt{r},x) \mapsto F(r,x,\cdot)$ is locally Lipshitz on $\cK_{a,C,\delta}$. Specifically,
fixing $(\wt{r}_\star,x_\star) \in \cK_{a,C,\delta}$, we denote by $\cN_\eta$ 
the set of $(\wt{r},x) \in \cK_{a,C,\delta}$ with
$\|(\wt{r},x)-(\wt{r}_\star,x_\star)\| < \eta$ and
proceed to show that 
\begin{align}\label{C-1-bd-F}
\|F(r',x',\cdot)-F(r,x',\cdot)\|_{C^{1,\gamma}(\bS^{n-1})} \le \kappa_0\|\wt{r}'-\wt{r}\| \,, \qquad
\forall (\wt{r}',x'), (\wt{r},x') \in \cN_\eta \,,
\end{align}
where $\eta>0$ and $\kappa_i<\infty$ depend only on 
$\|\wt{r}_\star\|$ and $\delta$.
To this end, let $D=D_{\wt r}$ and $D'=D_{\wt r'}$, noting  
that for $\eta<\delta$  
necessarily $\ovl{\B}(x',3\delta)\subset D \cap D'$. 
One can then construct a $C^3$-diffeomorphism $\phi:\R^n\to\R^n$ that maps $D$ 
to $D'$ such that $\wt r' (\theta) \theta = \phi(\wt r (\theta)\theta)$,
while being the identity map on $\ovl{\B}(x',\delta)$, and such that for some $\kappa_1<\infty$,
\begin{align}\label{phi-defor}
\|\phi - {\rm Id}\|_{C^{2, \gamma}(\ovl D)} \le \kappa_1\|\wt r'-\wt r\|  \le 2 \kappa_1 \eta \,.
\end{align}
% Justification of this claim:
% If $x'$ is the origin, then $\phi$ can be the radial stretching map:
%\[
%\Phi(x)=\frac{(|x|-\delta)_+}{r(x/|x|)-\delta}(r'(x/|x|)-r(x/|x|))\frac{x}{|x|}+x, \quad  x\in \R^n.
%\]
%For general $x'$, $\phi$ can be the following: in the convex hull spanned by $\B(x',\delta)$ and the origin, it is the identity; in a $\delta$-ball around the origin, it is also the identity; outside of these, it is a radial stretching map, similar to the above. At the boundaries we can do some smooth interpolation so that the map doesn't change abruptly.
Here ${\rm Id}(y)=y$ is the identity map, whose Jacobian matrix is $I_n$. 
Since Green's function is harmonic (in its second argument), away from its pole, 
\begin{align}\label{rem-sing}
\Delta G_{\wt{r}'}(x',\phi(y))-\Delta G_{\wt{r}}(x',y)=0 \,, \qquad \forall y \in \ovl D \,,
\end{align}
where at $y=x'$ this identity still holds, albeit in the distributional sense, since 
evaluating the \abbr{lhs} at any smooth function 
$f(\cdot)$ gives $f(\phi^{-1}(x'))-f(x')=0$
(by definition of the map $\phi$). Denoting by $\mathsf{D}\phi(y)$
the Jacobian matrix of $\phi$ at $y$ and 
\[
\Gamma(x',y) :=G_{\wt{r}'}(x',\phi(y)) \,, \quad {\rm for } \quad  y \in \ovl D 
\]  
(with a 
%single 
pole singularity at $y=x'$), we have that
\begin{equation}\label{Lap-Gamma}
\begin{aligned}
\Delta G_{\wt{r}'} (x',\phi(y)) &= \text{div}( \nabla G_{\wt{r}'}(x',\phi(y))) \,, \\    % I put parantheses to avoid ambiguity.
\Delta\Gamma(x',y) &= \text{div}(\mathsf{D}\phi(y)\nabla G_{\wt{r}'}(x',\phi(y))) \,.
\end{aligned}
\end{equation}
Thus, upon combining \eqref{rem-sing} and \eqref{Lap-Gamma} we arrive at 
\begin{align}\label{schauder}
\Delta(\Gamma-G_{\wt r})(x',y) =\text{div}((\mathsf{D}\phi-I_n)(y)\nabla G_{\wt{r}'}(x',\phi(y)))\,.
\end{align}

We show next that for some $\kappa_2$ finite,
\begin{align}\label{holder-bd}
\|(\Gamma-G_{\wt r})(x',\cdot)\|_{C^{2, \gamma}(\ovl{D})}  \le 
\kappa_2\|\wt{r}'-\wt{r}\| \,.
\end{align}
Indeed, since $\mathsf{D}\phi=I_n$ throughout $\B (x',\delta)$,  there is no singularity on the \abbr{RHS} of \eqref{schauder}. With the boundary condition $(\Gamma-G_{\wt r})(x',\cdot) \equiv 0$ on
$\partial D$, we have by the global Schauder estimate of \cite[Theorem 5.26]{HL}, applied to the Poisson equation \eqref{schauder}, combined with the maximum principle  for the same equation 
(see \cite[Proposition 2.15]{HL}), that for some $\kappa_3$ finite,
\begin{align*}
 \|(\Gamma-G_{\wt r})(x',\cdot)\|_{C^{2, \gamma}(\ovl{D})}
\le \kappa_3 \|\text{div}((\mathsf{D}\phi-I_n)\nabla G_{\wt{r}'}(x',\phi(\cdot)))\|_{C^{0,\gamma}(\ovl{D})} \,.
\end{align*}
We arrive at \eqref{holder-bd} upon further bounding the preceding \abbr{rhs} by 
\begin{align*} 
\kappa_4 ||\phi-{\rm Id}||_{C^{2, \gamma}(\ovl D)} \|G_{\wt{r}'}(x',\phi(\cdot))\|_{C^{2, \gamma}
(\ovl{D}\backslash \B(x',\delta))}   \le \kappa_2 \|\wt{r}'-\wt{r}\| \,.
\end{align*}
The latter inequality is due to \eqref{phi-defor}, since
$G_{\wt{r}'}(x',\phi(\cdot))$ is $C^{2, \gamma}$ away from its pole at $x'$,
up to the $C^3$-boundary (at least for $\eta$ small, thanks to \eqref{phi-defor}).
Recall \eqref{def-conv-F}, that 
\begin{align*}
F(r',x',\theta)=\frac{\partial}{\partial\mathbf{n}}G_{\wt{r}'}(x', y)\big|_{y=\wt{r}'(\theta)\theta}, 
\quad F(r,x',\theta)=\frac{\partial}{\partial\mathbf{n}}G_{\wt{r}}(x', y)\big|_{y=\wt{r}(\theta)\theta} \,.
\end{align*}
We thus consider \eqref{holder-bd} at $\wt r = \partial D$, and get \eqref{C-1-bd-F} upon 
using 
\eqref{phi-defor} and the fact that the inward normal unit vectors of $\wt{r}$ and $\wt{r}'$ at points $y$ 
and $\phi(y)$ respectively, are close to each other (since $\|\wt{r}'-\wt{r}\| < 2 \eta$, possibly 
reducing the value of $\eta>0$ as needed).

In view of \eqref{C-1-bd-F}, we get the local Lipschitz property 
of $(\wt{r},x) \mapsto F(r,x,\cdot)$, upon showing that for $\delta>\eta>0$ as above and 
some $\kappa_5$ finite 
\begin{align}\label{lip-F-x-variable}
\| F(r,x',\cdot)-F(r,x,\cdot)\|_{C^1(\bS^{n-1})}\le \kappa_5 |x'-x| \,, \qquad 
\forall (\wt r,x'), (\wt r,x) \in \cN_\eta \,.
\end{align}
Indeed, by the preceding construction, 
$\ovl \B(x'', 3\delta) \subset \ovl \B(x, 5\delta) \subset D$ for any $x''$ on  
the line segment connecting $x$ and $x'$. Note that
$\| \partial_{z_i} G_{\wt r}(z,\cdot) \|_{C^2({\ovl D} \backslash(\B(z,\delta))}$ are
bounded uniformly over $z \in D$ of distance at least $3\delta$ from $\partial D$. 
Thus,
applying the mean value theorem to $x\mapsto G(x,y)$, $x\mapsto \partial_{y_i} G(x, y)$ 
and $x\mapsto \partial^2_{y_iy_j} G(x, y)$, we get for some finite $\kappa_6$,   
\begin{align*}
%\label{green-bd-x}
\|G(x',\cdot)-G(x,\cdot)\|_{C^2(\ovl{D}\backslash(\B(x',2\delta)\cup \B(x,2\delta)))}\le \kappa_6 |x'-x|,
\end{align*}
from which \eqref{lip-F-x-variable} follows (thanks to \eqref{def-conv-F}).
\end{proof}

\begin{lem}\label{lem:apx-iden}
Fixing $n \ge 2$, a collection $g_\eta$ forms a 
 local spherical approximate identity (as in Definition \ref{def:apx-iden}), if and only if
\begin{align}\label{def:g}
g_\eta(t):=\frac{c_\eta}{\omega_{n-1}}\eta^{-(n-1)}\phi_\eta 
\Big(1-\frac{1-t}{\eta^2}\Big) \quad t\in[-1,1],
\end{align} 
for some continuous $\phi_\eta(s) \ge 0$, supported on  $[-1,1]$, with $\|\phi_\eta\|_\infty=1$ and for $\eta \in (0,1)$,
\begin{align}\label{def:c-eta}
c_\eta^{-1} := \frac{2^{n-2}}{\omega_n}\int_0^1\phi_\eta (1-2s)s^{\frac{n-3}{2}}(1-\eta^2 s)^{\frac{n-3}{2}}ds \,,
\end{align}
are bounded away from zero. In particular, this applies whenever 
$\phi_\eta(\cdot)=\phi(\cdot)$ is independent of $\eta$, and not identically zero.
\end{lem}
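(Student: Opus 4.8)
The plan is to pass from the sphere to the one-dimensional ``radial'' picture of zonal convolution, in which \eqref{def:g}--\eqref{def:c-eta} is simply a change of variables.

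\emph{Step 1 (radial reduction).} I would first invoke the standard reduction formula for zonal integrals (see \cite[Section 2.1]{DX}): for $g\in L^1[-1,1]$ and every $z\in\bS^{n-1}$,
\begin{align*}
\frac1{\omega_n}\int_{\bS^{n-1}}g(\langle z,\theta\rangle)\,d\sigma(\theta)=\frac{\omega_{n-1}}{\omega_n}\int_{-1}^1 g(t)(1-t^2)^{\frac{n-3}{2}}\,dt ,
\end{align*}
the normalization being $\omega_{n-1}\int_{-1}^1(1-t^2)^{(n-3)/2}dt=\omega_n$. Hence $1\star g_\eta=1$ is equivalent to $\int_{-1}^1 g_\eta(t)(1-t^2)^{(n-3)/2}dt=\omega_n/\omega_{n-1}$. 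Also, since $|z-\theta|^2=2(1-\langle z,\theta\rangle)$ on $\bS^{n-1}$, the cap $\{\theta:|z-\theta|\le2\eta\}$ equals $\{\theta:\langle z,\theta\rangle\ge1-2\eta^2\}$, so locality of $\{g_\eta\}$ says exactly that each $g_\eta$ is continuous, nonnegative, supported in $[1-2\eta^2,1]$, with $\eta^{n-1}\|g_\eta\|_\infty$ bounded.

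\emph{Step 2 (change of variables).} Next I would substitute $t=1-2\eta^2 s$, $s\in[0,1]$, so that $1-\frac{1-t}{\eta^2}=1-2s$, $dt=-2\eta^2\,ds$, and $1-t^2=(1-t)(1+t)=4\eta^2 s(1-\eta^2 s)$. Writing $g_\eta$ in the form \eqref{def:g}, so that $\phi_\eta(1-2s)=\omega_{n-1}\eta^{n-1}c_\eta^{-1}g_\eta(1-2\eta^2 s)$, the substitution gives
\begin{align*}
\int_{-1}^1 g_\eta(t)(1-t^2)^{\frac{n-3}{2}}dt=\frac{c_\eta\,2^{n-2}}{\omega_{n-1}}\int_0^1\phi_\eta(1-2s)\,s^{\frac{n-3}{2}}(1-\eta^2 s)^{\frac{n-3}{2}}\,ds ,
\end{align*}
with all powers of $\eta$ cancelling. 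Comparing with Step 1, $1\star g_\eta=1$ holds precisely when $c_\eta$ satisfies \eqref{def:c-eta}, once $\phi_\eta$ is normalized by $\|\phi_\eta\|_\infty=1$ — which forces the choice $c_\eta=\omega_{n-1}\eta^{n-1}\|g_\eta\|_\infty$. Under this identification $\eta^{n-1}\|g_\eta\|_\infty=c_\eta/\omega_{n-1}$, so ``$\eta^{n-1}\|g_\eta\|_\infty$ bounded'' is the same as ``$c_\eta$ bounded above'', i.e.\ ``$c_\eta^{-1}$ bounded away from $0$''. This gives the bijection between the support/normalization/boundedness conditions and the form \eqref{def:g}--\eqref{def:c-eta}; for the ``only if'' direction one simply reads off $\phi_\eta$ on $[-1,1]$ as $g_\eta$ re-expressed in the variable $1-\frac{1-t}{\eta^2}$ and rescaled to sup-norm $1$ (legitimate because $1\star g_\eta=1$ forces $g_\eta\not\equiv0$, and $g_\eta$ vanishes off $[1-2\eta^2,1]$, so the resulting $\phi_\eta$ is continuous, nonnegative, of sup-norm $1$).

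\emph{Step 3 (the two $L^2$ properties).} It then remains to check that a kernel $g_\eta$ of the form \eqref{def:g}--\eqref{def:c-eta} automatically obeys $\|f\star g_\eta\|_2\le\|f\|_2$ and $\|f\star g_\eta-f\|_2\to0$. For the contraction I would note that $k_\eta(z,\theta):=\omega_n^{-1}g_\eta(\langle z,\theta\rangle)\ge0$ is symmetric in $(z,\theta)$ and, by Step 1, $\int k_\eta(z,\theta)d\sigma(\theta)=\int k_\eta(z,\theta)d\sigma(z)=1$; then Jensen's inequality followed by Fubini gives $\|f\star g_\eta\|_2^2\le\iint|f(\theta)|^2k_\eta(z,\theta)\,d\sigma(\theta)d\sigma(z)=\|f\|_2^2$. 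For the approximate-identity property I would use $1\star g_\eta=1$ to write $(f\star g_\eta)(z)-f(z)=\int(f(\theta)-f(z))k_\eta(z,\theta)d\sigma(\theta)$; for continuous $f$ the integrand vanishes unless $|z-\theta|\le2\eta$, so $\|f\star g_\eta-f\|_\infty$ is controlled by the modulus of continuity of $f$ at scale $2\eta$, hence $\to0$, and thus $\|f\star g_\eta-f\|_2\to0$; the general $f\in L^2(\bS^{n-1})$ then follows from the uniform contraction bound by a routine $3\varepsilon$ density argument. Finally, for the ``in particular'' clause with $\phi_\eta\equiv\phi$ a fixed nonzero profile (WLOG $\|\phi\|_\infty=1$): the integrand in \eqref{def:c-eta} is nonnegative, continuous, integrable (as $\frac{n-3}{2}>-1$ for $n\ge2$) and positive on a nonempty subinterval, so $\int_0^1\phi(1-2s)s^{(n-3)/2}ds>0$, while $(1-\eta^2 s)^{(n-3)/2}$ lies between fixed positive constants for $s\in[0,1]$ and $\eta$ bounded away from $1$; hence $c_\eta^{-1}$ from \eqref{def:c-eta} is bounded away from $0$ (and finite), so Steps 1--3 apply.

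\emph{Main difficulty.} There is no serious obstacle here; the one place demanding care is the bookkeeping in Step 2 — matching the powers of $\eta$ and the constant $2^{n-2}/\omega_n$ to \eqref{def:c-eta} on the nose, which pins down $c_\eta=\omega_{n-1}\eta^{n-1}\|g_\eta\|_\infty$ as the correct normalization — together with the standard density argument for the $L^2$ approximate-identity property in Step 3.
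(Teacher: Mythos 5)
Your proof is correct. Steps 1--2 (the radial reduction of the zonal integral, the substitution $t = 1 - 2\eta^2 s$, the cancellation of the $\eta$-powers, and the identification $c_\eta = \omega_{n-1}\eta^{n-1}\|g_\eta\|_\infty$ that makes \eqref{def:c-eta} precisely the normalization $1\star g_\eta=1$) reproduce exactly the computation the paper carries out via \cite[(2.1.8)]{DX}, so there is nothing to compare there. Where you take a different route is Step 3: the paper writes $(f\star g_\eta)(z)$ as an average of the translation operators $T_t f(z)$ over $t\in[1-2\eta^2,1]$ and then simply invokes the cited facts $\|T_t f\|_2\le\|f\|_2$ and $\|T_t f - f\|_2\to 0$ as $t\to 1^-$ from \cite[Lemma 2.1.7]{DX}, pushing the norm inside by Minkowski's integral inequality. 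You instead observe that $k_\eta(z,\theta)=\omega_n^{-1}g_\eta(\langle z,\theta\rangle)$ is a nonnegative, symmetric, doubly stochastic kernel, obtain the contraction by Jensen and Fubini, and establish $\|f\star g_\eta-f\|_2\to 0$ first for continuous $f$ via the locality of the kernel and the modulus of continuity, then for general $L^2$-functions by the contraction bound and density. Both arguments are sound; the paper's is shorter because it offloads the hard part to the reference, while yours is self-contained, at the cost of a density argument and the (implicit) use of $g_\eta\ge 0$, which the paper's route does not need. For the ``in particular'' clause the paper uses monotonicity of $\eta\mapsto c_\eta^{-1}$ together with positivity of the endpoint values, while you use positivity of the integrand on a subinterval; the two are essentially interchangeable, although your passing remark that $(1-\eta^2 s)^{(n-3)/2}$ stays between fixed positive constants ``for $\eta$ bounded away from $1$'' deserves a word on the full range $\eta\in(0,1)$ (which, for a nonzero continuous $\phi$, is handled just as easily by the paper's monotonicity observation or by noting that $s\mapsto s^{(n-3)/2}(1-s)^{(n-3)/2}$ is strictly positive on $(0,1)$).
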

\begin{proof} 
The mapping from $g_\eta(\cdot)$ to $\phi_\eta(\cdot)$ is merely a change of argument,
with $g_\eta(\cdot)$ continuous and non-negative iff $\phi_\eta(\cdot)$ are. Further, having the arbitrary
constant $c_\eta>0$ allows us to set \abbr{wlog} $\|\phi_\eta\|_\infty=1$. Our requirement that  
$g_\eta(\langle z, \theta \rangle)$ be supported on the 
spherical cap $|\theta-z| \le 2\eta$, $\theta \in \bS^{n-1}$ translates into 
$g_\eta(\cdot)$ supported on $[1-2\eta^2,1]$, or equivalently, to $\phi_\eta(\cdot)$ supported on $[-1,1]$,
whereas under \eqref{def:g} the uniform boundedness of $\eta^{n-1} \|g_\eta\|_\infty$ amounts 
to the same for $c_\eta$.
 
Next, by a change of variable (see \cite[(2.1.8)]{DX}),
\begin{align}
(1 \star g_\eta)(z) =\frac{\omega_{n-1}}{\omega_n}\int_{1-2\eta^2}^1g_\eta(t)(1-t^2)^{\frac{n-3}{2}}dt = 1 \,,
\quad \forall z \in \bS^{n-1} \,,
\label{integ=1}
\end{align}
iff $c_\eta^{-1}$ are given by \eqref{def:c-eta}. Further,
recall \cite[(2.1.8)]{DX}, that for every $z\in \bS^{n-1}$ and $f \in L^2(\bS^{n-1})$,
\begin{align}\label{rewrite-conv}
(f \star g_\eta)(z)=\frac{\omega_{n-1}}{\omega_n}\int_{1-2\eta^2}^1g_\eta(t)T_t f(z)(1-t^2)^{\frac{n-3}{2}}dt,
\end{align} 
where $\{T_t\}_{t\in[-1,1]}$ is a family of translation operators \cite[(2.1.6)]{DX} defined by
\begin{equation}\label{def:Tt}
T_t f(x) :=\frac{(1-t^2)^{\frac{1-n}{2}}}{\omega_{n-1}}\int_{\langle x,y \rangle=t} f(y) \, d\ell_{x,t}(y).
\end{equation}
Here $d\ell_{x,t}$ denotes Lebesgue measure on 
$\{y \in \bS^{n-1} \colon \langle x,y \rangle=t\}$. These operators satisfy
\begin{align}\label{per-t-bd}
\forall t, \;\; \|T_t f\|_2\le \|f\|_2 \, ,\qquad \lim_{t\to 1^-}\|T_t f-f\|_2= 0, 
\end{align}
see \cite[Lemma 2.1.7]{DX}. Hence, by \eqref{integ=1}-\eqref{per-t-bd} and the convexity of the norm, for all $\eta \in (0,1)$,
% we have for all $n\ge 2$,
\begin{align}\label{contraction}
\|f \star g_\eta\|_2 &\le \frac{\omega_{n-1}}{\omega_n}\int_{1-2\eta^2}^1g_\eta(t)\|T_tf\|_2(1-t^2)^{\frac{n-3}{2}}dt\le \|f\|_2 \,, \\
\|f \star g_\eta-f\|_2 &\le \frac{\omega_{n-1}}{\omega_n}\int_{1-2\eta^2}^1g_\eta(t)\|T_tf-f\|_2(1-t^2)^{\frac{n-3}{2}}dt \nonumber \\
& \le \sup_{t \in [1-2\eta^2,1]} \{ \|T_t f - f \|_2 \} 
\label{approx-id}
\end{align}
with the \abbr{rhs} of \eqref{approx-id} converging to zero 
as $\eta\to 0$ (see \eqref{per-t-bd}). Finally, note that when $\phi_\eta(\cdot)=\phi(\cdot)$ is
independent of $\eta$, we have from \eqref{def:c-eta} that $\eta \mapsto c_\eta^{-1}$ is monotone,
with $c_0^{-1},c_1^{-1}>0$, for any non-zero $\phi$ and $n \ge 2$.
\end{proof}

\medskip
\noindent
{\bf{Acknowledgments.}}
We thank Juli\'an Fern\'andez Bonder and Luis Silvestre for useful conversations and Mart\'in Arjovsky for pointing out the plausibility of Lemma \ref{H_-1} and its proof.

\bibliographystyle{plain}
\bibliography{biblio}

\end{document}